\documentclass[11pt,reqno]{amsart}
   \usepackage{graphicx}
\usepackage{amscd,amsmath,amsopn,amssymb,amsthm,multicol}
\usepackage{hyperref}
\usepackage[all]{xy}
\usepackage{amscd}
\usepackage{verbatim}
\usepackage{lscape}
\usepackage{slashed}
\usepackage{graphicx}
\usepackage{color}
\usepackage{lscape}
 \usepackage{cancel}
 \usepackage{musixtex}
   \usepackage{harmony}
   \usepackage{comment}
   \usepackage{musixtex}
   \usepackage{wasysym}
   \usepackage[b]{esvect}
   \usepackage{tikz-cd}
\usepackage{mathrsfs}
\DeclareMathAlphabet{\mathscrbf}{OMS}{mdugm}{b}{n}
\numberwithin{equation}{section}
\definecolor{dblue}{rgb}{0.01,0.01,0.44}
\definecolor{red}{rgb}{0.57,0.11,0.15}

\language=0
  \textwidth 165mm
  \textheight 225mm
 \setlength{\topmargin}{-0.5cm}
 \setlength{\oddsidemargin}{0cm}
 \setlength{\evensidemargin}{0cm}

\DeclareMathOperator{\diag}{diag}
\DeclareMathOperator{\Ad}{Ad}

\DeclareMathOperator{\Id}{Id}

\DeclareMathOperator{\Sym}{Sym}

\DeclareMathOperator{\Ric}{Ric}

\DeclareMathOperator{\vol}{vol}

\DeclareMathOperator{\Spin}{Spin}
\DeclareMathOperator{\rnk}{rnk}

\DeclareMathOperator{\dd}{d}

\newcommand{\fr}{\mathfrak}
\newcommand{\al}{\alpha}
\newcommand{\be}{\beta}

\newcommand{\wi}{\widetilde}
\newcommand{\bb}{\mathbb}
\newcommand{\mc}{\mathcal}

\DeclareMathOperator{\SO}{SO}
\DeclareMathOperator{\Sp}{Sp}
 \DeclareMathOperator{\SU}{SU}
  
\DeclareMathOperator{\U}{U}
\DeclareMathOperator{\G}{G}

\DeclareMathOperator{\E}{E}
\DeclareMathOperator{\Ss}{S}

\DeclareMathOperator{\Gl}{GL}
\newcommand{\thickline}{\noalign{\hrule height 1pt}}

   \newtheorem{lemma} {Lemma} [section]
\newtheorem{theorem}[lemma]{Theorem}
\newtheorem{remark}[lemma] {Remark}
\newtheorem{prop} [lemma]{Proposition}
\newtheorem{definition}[lemma] {Definition}
\newtheorem{corol}[lemma] {Corollary}
\newtheorem{example}[lemma] {Example}

 \definecolor{mygf}{rgb}{0.23, 0.31,0.47}
\definecolor{myb}{rgb}{0.09, 0.24,0.47}
\definecolor{mygrey1}{rgb}{0.13, 0.11,0.17}
\definecolor{myopen}{rgb}{0.65,0.36,0.16}
\definecolor{mydark}{rgb}{0.08,0.18,0.48}
\definecolor{drey}{rgb}{0.121,0.21,0.452}
\definecolor{mred}{rgb}{0.4,0.17,0.1}
\definecolor{ur}{rgb}{0.57,0.31,0.25}
\definecolor{red}{rgb}{0.57,0.11,0.15}
\definecolor{re}{rgb}{0.57,0.21,0.15}
\definecolor{blue}{rgb}{0.07,0.24,0.38}
\definecolor{dblue}{rgb}{0.07,0.24,0.65}
\definecolor{u}{rgb}{0.6,0.04,0.2}
\definecolor{dgreen}{rgb}{0.11,0.16,0.41}
\definecolor{dgr2}{rgb}{0.6,0.36,0.2}
\definecolor{grb}{rgb}{0.18,0.30,0.58}
\definecolor{mmg}{rgb}{0.08,0.48,0.28}
\definecolor{cr}{rgb}{0.8,0.1,0.0}
\definecolor{crew}{rgb}{0.2,0.5,0.2}
\begin{document}

\title{Decomposable $(4,7)$ solutions  in eleven-dimensional supergravity} 
 \author{Dmitri Alekseevsky, Ioannis Chrysikos, Arman Taghavi-Chabert}
  \address{Institute for Information Transmission Problems, B. Karetny
per. 19, 127051, Moscow, Russia and Faculty of Science,  University of Hradec Kr\'alov\'e,  Rokitanskeho 62, Hradec Kr\'alov\'e
 50003, Czech Republic}
 \email{dalekseevsky@iitp.ru}
  \address{Dipartimento di Matematica ``G. Peano'', Universit\`a  degli Studi di Torino, Via Carlo Alberto 10, 10123 Torino, Italy}
  \email{ioannis.chrysikos@unito.it}
  \address{Dipartimento di Matematica ``G. Peano'', Universit\`a  degli Studi di Torino, Via Carlo Alberto 10, 10123 Torino, Italy and  Department of Mathematics,
Faculty of Arts and Sciences, American University of Beirut, P.O. Box 11-0236 Riad El Solh, Beirut 1107 2020, Lebanon}
\email{taghabert@gmail.com}

 \begin{abstract}
  Consider an oriented four-dimensional   Lorentzian manifold $(\wi{M}^{3, 1}, \wi{g})$ and an oriented seven-dimensional  Riemannian manifold $(M^{7}, g)$.  We   describe a  class  of decomposable eleven-dimensional supergravity backgrounds  on  the product manifold  $(\mc{M}^{10, 1}=\wi{M}^{3,1} \times M^7, g_{\mc{M}}=\wi{g}+g)$,   endowed with  a flux  form   given in terms of the volume form on $\wi{M}^{3, 1}$ and  a     closed $4$-form $F^{4}$ on $M^{7}$.  
 We show that the Maxwell equation for such a flux form can be read in terms of the co-closed 3-form $\phi=\star_{7}F^{4}$. Moreover,  the   supergravity   equation  reduces  to the  condition  that  $(\wi{M}^{3,1},\wi{g})$ is   an Einstein  manifold  with negative Einstein  constant
   and $(M^7, g, F)$
    is  a Riemannian manifold  which satisfies   the  Einstein   equation    with   a  stress-energy tensor   associated  to the 3-form $\phi$.  Whenever this 3-form is generic,  the Maxwell  equation    induces   a  weak   $\G_2$-structure   on $M^{7}$ and  then we obtain decomposable  supergravity backgrounds given by the product of  a   weak $\G_2$-manifold $(M^7, \phi, g)$   with  a Lorentzian   Einstein manifold  $(\wi{M}^{3,1},\wi{g})$.  
    We classify  homogeneous 7-manifolds  $M^{7}=G/H$ of a compact Lie group $G$ and indicate the cosets which admit  an invariant or non-invariant $\G_2$-structure, or even no $\G_2$-structure.    Then we construct  examples  of    compact  homogeneous  Riemannian  7-manifolds endowed with  non-generic invariant  3-forms which satisfy  the Maxwell  equation, but  the construction of   decomposable     homogeneous supergravity backgrounds of this type remains an open problem.


    \end{abstract}
\maketitle

\section{Introduction}\label{intro}

  \vskip 0.1cm
 Ten-dimensional supersymmetric string theories and their eleven-dimensional unified analogue, called M-theory,  are some of the most  promising approaches to  a consistent model for the unification of fundamental forces of nature.
Indeed, supergravity theories merge the theory of general relativity with supersymmetry and are crucial for understanding the dynamics of massless fields in string theories, since they  determine the appropriate backgrounds in which strings propagate (see  \cite{BBS} for a comprehensive survey).   Nowadays there are several known consistent supergravity theories in different dimensions. For example,  in dimension ten there are at least 5 different types of   string theories, namely Type I, Type IIA and IIB and some heterotic $\E_8 \times \E_8$ and $\SO_{32}$ theories.  In dimension eleven  physicists  are concerned with  the (weak) coupling limits  of these theories via   T-duality and other kinds of dualities that  yield a unique eleven-dimensional M-theory.

The eleven-dimensional supergravity theory has as  bosonic fields some   Lorentzian metric  $g_{\mc{M}}$ and a 3-form potential $A$ with 4-form field strength $\mc{F}=\dd A$, the so-called {\it flux form}, 
satisfying  the supergravity field equations (with zero gravitino):
\[
\left\{
\begin{tabular} {rclll}
$\dd \mc{F}$ & $=$ &  $0,$ & $ \text{{\it Closure}}$ &  $(\mathscr{C})$, \\
$\dd \star \mc{F}$ &   $=$ & $(1/2)\mc{F} \wedge \mc{F},$ & $ \text{{\it Maxwell}}$ & $(\mathscr{M})$, \\
$\Ric^{g_{\mc{M}}}(X,Y)$ &   $=$ & $(1/2) \langle X\lrcorner \mc{F}, Y\lrcorner \mc{F} \rangle -(1/6)g_{\mc{M}}(X,Y) \| \mc{F} \|^2$, & $\text{{\it Einstein}}$  & $(\mathscr{E})$.
\end{tabular}\right.
\]
Here, $\dd\equiv \dd^{g_{\mc{M}}}$ is the exterior derivative of differential forms on the Lorentzian manifold $(\mc{M}^{10, 1}, g_{\mc{M}})$,  $\Ric^{g_{\mc{M}}}$ is the Ricci tensor of the Levi-Civita connection on $\mc{M}$, and
\[
\langle X\lrcorner \mc{F}, Y\lrcorner \mc{F}  \rangle   = \frac{1}{3!} g_{\mc{M}}(X\lrcorner \mc{F}, Y\lrcorner \mc{F}), \quad\quad  \|\mc{F} \|^2   = \frac{1}{4!} g_{\mc{M}}(\mc{F}, \mc{F}).
\]

\noindent The second equation is referred to as  the {\it Maxwell-like equation} and  the third one as the {\it supergravity Einstein equation}.   Note that usually one asks from $\mc{M}^{10, 1}$ to be also  spin, but in this work we are not interested in the supersymmetries of the model, so we do not pay much attention to this condition.

Classification of supergravity backgrounds, i.e. Lorentzian manifolds $(\mc{M}^{10, 1}, g_{\mc{M}}, \mc{F}^{4})$ solving the above system, can be considered in several different contexts. For example, besides the construction of Killing superalgebras  (see \cite{FOF, FOFP}), there are also methods  based on the theory of $G$-structures (see for example \cite{Duff2, BJ, Pap, Mac, Wit}).  In  this paper   we are concerned with      eleven-dimensional oriented Lorentzian manifolds $\mc{M}\equiv\mc{M}^{10, 1}:=\wi{M}^{3,1} \times M^7$ given by a product of a four-dimensional oriented Lorentzian  manifold $(\wi{M}\equiv\wi{M}^{3, 1}, \wi{g})$ and a seven-dimensional (compact)  oriented Riemannian manifold $(M\equiv M^{7}, g)$ and  analyse the supergravity equations from a purely geometric perspective.
 In particular, we consider the following  type of   flux  forms on $\mc{M}$
  \[
  \mc{F}^{4}=f\cdot\vol_{\wi{M}}+F^{4}, \quad\quad (\ast)
  \]
where $F^{4}$ is a closed $4$-form on $M$ and $f\in\bb{R}$ is assumed to be a constant. 
Solutions  of eleven-dimensional supergravity for such 4-forms and  with respect to the product metric $g_{\mc{M}}=\wi{g}+g$, will be called {\it $(4,7)$-decomposable supergravity backgrounds}.

For this specific Ansatz  the core observation (see Proposition \ref{nice1}) is that the Maxwell equation $(\mathscr{M})$ is equivalent to the equation
\[
\dd\star_{7}{F}^{4}=f\cdot {F}^{4},
\]
which by setting $\phi:=\star_{7}F^{4}$ can be rewritten as
\[
\dd {\phi} = f \star_7 {\phi}. \quad\quad (\ast\ast)
\]
Moreover, the  closure condition $(\mathscr{C})$ of $\mc{F}$   can be rephrased as  $\dd \star_7 {\phi} =0$.  For brevity, 3-forms on $M^{7}$ satisfying the last two conditions for some constant $f\in\bb{R}$, will be referred to as {\it special 3-forms}. In these terms  one has that  the specific flux form $\mc{F}$  is a solution of the closure condition $(\mathscr{C})$ and the supergravity Maxwell equation $(\mathscr{M})$  if and only if    the associated 3-form $\phi:=\star_{7}F^{4}$ on $M^{7}$  is special.\\

Turning  now to  the corresponding supergravity Einstein equation $(\mathscr{E})$,  we  conclude   that   the four-dimensional Lorentzian manifold $(\wi{M}, \wi{g})$ must be  Einstein with negative Einstein constant $\Lambda:= - \frac{1}{6} \left(2f^2
+ \|{\phi}\|^2 \right)$ (Proposition \ref{true1?}).   Moreover, we see that the Ricci tensor of $(M, g)$ must satisfy the equation
\[
 \Ric^{g}(X, Y)= \frac{1}{6}g(X,Y) \left(f^2  + 2 \| {\phi} \|^2_{M} \right)+q_{\phi}(X, Y), \quad\quad (\ast\ast\ast)
\]
 where $q_{\phi}(X, Y)$ is the symmetric bilinear form defined by $q_{\phi}(X, Y):=- \frac{1}{2} \langle  X\lrcorner {\phi} , Y\lrcorner  {\phi} \rangle_{M}$.
We then  proceed with a description of some special situations arising by focussing  on $(\ast\ast\ast)$.  In particular, we  examine  the following  basic  classes  of  special 3-forms on $(M,g)$:
\begin{itemize}
\item the trivial 3-form, i.e.\ $\phi =0$ (and hence $F=0$) but with $f\neq 0$,
\item non-zero  harmonic 3-forms, i.e.\  $\phi \neq 0$, $f=0$,
\item non-harmonic  3-forms, i.e.\ $\phi \neq 0$, $f\neq 0$.
\end{itemize}
For  these three cases  we analyse  the supergravity equations and  describe solutions.
In particular, for the  more general third case  the construction of $(4, 7)$-decomposable supergravity backgrounds relies on  the theory of  $\G_2$-structures (see also \cite{Atiyah, Sfetsos, BJ,  Agr, House, Duff} for the role of $\G_2$-geometries in M-theory).
 Here,  we show that whenever  $\phi:=\star_{7}F^{4}$ is   a  co-closed {\it generic} 3-form on $M^{7}$ satisfying equation $(\ast\ast)$ for $f\neq 0$, i.e. a {\it generic special 3-form with $f\neq 0$}, which is equivalent to say that $\phi$ induces a weak  $\G_2$-structure on $M$,  then the pair
\[
(\mc{M}=\wi{M}\times M, \  g_{\mc{M}}=\wi{g}+g),
\]
where $g$ is the Einstein metric induced by $\phi$, provides $(4, 7)$-decomposable supergravity solutions. 
In particular,   we obtain that

\vskip 0.2cm
\noindent{\sc  Theorem A.}  {\it Assume that  the product  $(\mc{M}=\wi{M}\times M, g_{\mc{M}}=\wi{g}+g)$ is endowed with the  4-form  $\mc{F}^{4}:=f\cdot\vol_{\wi{M}}+F^{4}$,
for some constant $0\neq f\in\bb{R}$ and some closed 4-form $F^{4}\in\Omega^{4}_{\rm cl}(M)$ on $M$, such that $\phi:=\star_{7}F^{4}$ is a generic 3-form on $M$. Then $(\mc{M}, g_{\mc{M}}, \mc{F}^{4})$   gives rise to a $(4, 7)$-decomposable supergravity background if and only if $(M, g, \phi:=\star_{7}F^{4})$ is a weak $\G_2$-manifold and $(\wi{M}, \wi{g})$ is Einstein with negative Einstein constant. In particular,  $f$ takes the values $f=\pm 2$. }

\vskip 0.2cm
 Weak $\G_2$-structures are spin 7-manifolds $(M, g, \phi)$ endowed with a generic 3-form $\phi$ satisfying the differential equation  $\dd\phi=\lambda\star_{7}\phi$, for some non-zero constant $\lambda$. Such $\G_2$-structures  are extremely interesting in theoretical and mathematical physics, since  they are manifolds admitting  non-trivial solutions of the Killing spinor equation (see \cite{FKMS}). We should emphasize that our approach to Theorem A does {\it not} take into account the theory of Killing superalgebras, i.e. we reach Theorem A by solving only  the zero gravitino supergravity equations, independently of the supersymmetries that preserves the corresponding model $\mc{M}$.  Moreover, our Ansatz serves well the purpose of finding obstructions to the existence of $(4, 7)$-decomposable supergravity backgrounds. For example,  whenever $\phi=\star_{7}F^{4}$ is a {\it generic special 3-form with $f=0$}, which means that it induces a {\it parallel $\G_2$-structure} on $M$,  we obtain the following   non-existence result.

 \vskip 0.2cm
\noindent{\sc {Corollary A.}} {\it If $f=0$ and $\phi:=\star_{7}F^{4}$ is a generic 3-form on $M^{7}$, where $F^{4}\in\Omega^{4}_{\rm cl}(M^{7})$, then the closure condition $(\mathscr{C})$ and the Maxwell equation $(\mathscr{M})$ for our Ansatz $(\ast)$, imply  that $\phi$ is $\nabla^{g}$-parallel, i.e. $\phi$ induces  a parallel $\G_2$-structures   and hence $(M, g)$ is Ricci flat.  In this  case the eleven-dimensional Lorentzian manifold $(\mc{M}=\wi{M} \times M, g_{\mc{M}}=\wi{g}+g, \mc{F}^{4})$ does not give rise to a $(4, 7)$-decomposable supergravity background.}

\vskip 0.2cm
The rest of the article is devoted to the homogeneous case, where  the calculations  related to the supergravity equations become  more attractive, since the tensor fields $g_{\mc{M}}$ and $\mc{F}^{4}$ are invariant under the action of a Lie group.
In this case we obtain a series of examples serving Theorem A, and these are based on the  the classification of compact homogeneous weak $\G_2$-manifolds and homogeneous Lorentz Einstein 4-manifolds, given in  \cite{FKMS} and \cite{Kom, Fels}, respectively.    Then we  examine     the supergravity equations for invariant {\it non-generic 3-forms} $\phi:=\star_{7}F^{4}$. 
To this end,  we classify all almost effective seven-dimensional  homogeneous   manifolds $M^{7}=G/H$  of  a compact  Lie group  $G$  (see Table 2 and Theorem \ref{classg2}). This  extends the      classification   of simply-connected homogeneous 7-manifolds  $M^{7}=G/H$ of a  semisimple compact  group $G$, which was used  for classifying   homogeneous  Einstein 7-manifolds,  see \cite{Cast2, Nik}.
In combination with the  classification of  compact homogeneous 7-manifolds  admitting invariant $\G_2$-structures given in  \cite{Le, Rei}, we  obtain the complete list of all compact (almost) effective homogeneous 7-manifolds   which admit a $\G_2$-structure but  no  invariant $\G_2$-structure (and hence no invariant  spin   structure, see Theorem \ref{NONGEN}). 
 We then   describe    all  invariant     special  3-forms $\phi$  (i.e. solutions  of  Maxwell   equation)  on   the non-spin  manifold
   $\bb{C}P^{2}\times\Ss^{3} =  \SU_3/\U_2 \times \SU_2$. We also discuss the case of the Lie group $\Ss^{3}\times{\rm T}^{4}=\SU_2\times{\rm T}^{4}$. 
  In both cases we show  that  there  are   invariant  special 3-forms   which are not generic.


    \smallskip

 \noindent {\bf Acknowledgements:} It is pleasure to thank Anna Fino (Torino) for useful references and discussions. 
 The  first author  is  partially supported  by grant  no.~18-00496S of the Czech Science Foundation.
 The second author is  a Marie Curie fellow of the Istituto Nazionale di Alta Matematica ({INdAM}) and thanks Dipartimento di Matematica  ``G. Peano''  (Universit\`a degli Studi di Torino) for its hospitality.

\section{11D supergravity backgrounds of the form $\mc{M}^{10, 1}=\widetilde{M}^{3, 1}\times{M}^{7}$}



We begin by fixing some conventions, relevant to our subsequent computations.

\smallskip
\noindent {\bf Conventions.}
 Consider an $n$-dimensional pseudo-Riemannian manifold $(N, h)$   of signature $(p,q)$. At any point $x\in N$,  the tangent space $V:=T_{x}N= \mathbb{R}^{p,q}$ $(n=p+q)$  is a pseudo-Euclidean vector  space  endowed with a non-degenerate inner product of signature
\[
(p,q)=(n-q, q)= (+\cdots +, -\cdots-).
\]
When the signature is $(n,0)$ (resp. $(n-1, 1)$), then we say that $(N, h)$ is a \emph{Riemannian} (resp. \emph{Lorentzian}) manifold.  We shall denote by $\fr{so}(V)$  the Lie algebra of skew-symmetric endomorphisms of $V$; for any $u, v\in V$ let $w\wedge u$ the skew-symmetric endomorphism on $V$, given by  $(u\wedge v)(z)=h(v, z)u-h(u, z)v$.  Hence, here we take the convention  $\omega_1 \wedge \omega_2 := \omega_1 \otimes \omega_2 - \omega_1 \otimes \omega_2$ for any two elements $\omega_{1}, \omega_2\in\bigwedge^{1}T_{x}^{*}N$. The metric tensor $h$ induces a metric in $\bigwedge^{\bullet}TN$ and its dual,  namely
\[
\langle \phi , \psi  \rangle:= \det ( \langle \phi_i , \psi_j \rangle ) = \frac{1}{k! } h(\phi , \psi),
\]
 for any  decomposable $k$-vector $\phi=\phi_1 \wedge \ldots\wedge \phi_k$ and $\psi = \psi_1 \wedge \ldots \wedge\psi_k$.
We choose a volume form $\vol^{(n)}$ normalised as $\langle \vol^{(n)} , \vol^{(n)} \rangle  = (-1)^q$. Equivalently, if $\{ e_1 , \ldots , e_p , e_{p+1} , \ldots e_{p+q} \}$ is a pseudo-orthonormal frame with
\[
h( e_i , e_j )   = \delta_{ij}, \quad h( e_k , e_\ell )   = - \delta_{k\ell}, \quad    h( e_i , e_k )   = 0, \quad  \text{for} \ \  1 \leq i,j \leq p, \  p+1 \leq k,\ell \leq p+q,
\]
then $\vol^{(n)} ( e_1 , e_2 , \ldots , e_n )  = 1$.
The Hodge star operator is defined by $\phi \wedge \star \psi  =  \left\langle \phi , \psi\right\rangle \vol ^{(n)}$ for any $k$-form $\phi$ and $\psi$. In particular,  for any $\phi\in\bigwedge^{k}T_{x}^{*}N$ we have the identities
\[
\star 1 = \vol^{(n)}, \quad \star \vol^{(n)} = (-1)^q, \quad \star \star \phi  = (-1)^{k(n-k)+q}  \phi,
\]
  and hence  $\phi \wedge \psi  = (-1)^{k(n-k)+q} \langle \phi , \star \psi \rangle \vol^{(n)}$,
  for any $\phi\in\bigwedge^{k}T_{x}^{*}N$ and $\psi\in\bigwedge^{n-k}T_{x}^{*}N$.  

\subsection{Supergravity backgrounds   of the form $\mc{M}^{10, 1}=\widetilde{M}^{3, 1}\times{M}^{7}$ }   

Let us consider an  eleven-dimensional Lorentzian manifold  $(\mc{M}\equiv\mc{M}^{10, 1}, g_{\mc{M}})$ given by the  product of a four-dimensional Lorentzian manifold $(\wi{M}\equiv\wi{M}^{3, 1}, \wi{g})$ and a seven-dimensional Riemannian manifold $(M\equiv M^{7},  g)$,
\begin{equation}\label{product}
(\mc{M}, g_{\mc{M}}) = (\wi{M} \times M, \ g_{\mc{M}}:=\wi{g}+ g).
\end{equation}
 We assume that both $(\wi{M}, \wi{g})$ and $(M, g)$ are oriented with volume forms $\vol_{\wi{M}}$ and $\vol_{M}$, respectively.  Then,  the volume form on $\mc{M}$ is given by $\vol_{\mc{M}} := \vol_{\wi{M}}+\vol_{M}$ and $\mc{M}$ is oriented as well. Since  $\dim\wi{M}=4$,  notice that any 4-form on $\wi{M}^{4}$ is closed. We mention that we  do {\it not} assume any {\it homogeneity condition} for the Lorentzian manifold $\mc{M}=\wi{M}^{}\times M^{}$.
However, we   will   assume  that   $M^7$  is   compact   and  that    the   flux  4-form is  given  by
\begin{equation}\label{maincase1}
\mc{F}^{4}:=f\cdot \vol_{\wi{M}}+ {F}^{4},
\end{equation}
for some closed 4-form $F^{4}$ on $M$ and  a  constant  $f\in\bb{R}$. Note that the last condition  is equivalent to say that $\wi{F}^{4}$ is co-closed, i.e. $\dd\star_{4}\wi{F}^{4}=0$, where $\star_{4} : \Omega^{k}(\wi{M})\to \Omega^{4-k}(\wi{M})$ is the Hodge star operator on $\wi{M}$.  Indeed, $\star_{4}^{2}\big|_{\Omega^{k}}=(-1)^{k(4-k)+1}\Id_{\Omega^{k}}$,
with $\star_{4}\vol_{\wi{M}^{4}}=(-1)^{q}=-1$ (since  $q=1$), and hence the relation   $\wi{F}^{4}:=f\cdot\vol_{\wi{M}}$ yields  $\star_{4} \wi{F}^{4}=-f$. Next we shall call 4-forms of  type (\ref{maincase1}) {\it decomposable}.


 \vskip 0.2cm
 \noindent{\bf On the closure condition $(\mathscr{C})$ and the Maxwell equation $(\mathscr{M})$.}
Let us focus now on the closure condition $(\mathscr{C})$ and the Maxwell equation  $(\mathscr{M})$. We  denote   the Hodge star operators    on $\mc{M}$ and ${M}$   as    $\star_{11} : \Omega^{k}(\mc{M})\to \Omega^{11-k}(\mc{M})$ and $\star_{7} : \Omega^{k}({M})\to \Omega^{7-k}({M})$, respectively.
 We  need  the following elementary  result (which  makes sense, appropriately reformulated,   for   any  pseudo-Riemannian metric).


\begin{lemma} \label{work1}
Consider the Lorentzian manifold $(\mc{M}^{10, 1}=\wi{M}^{3, 1}\times M^{7}, g_{\mc{M}}=\wi{g}+g)$ and let $\wi{\alpha} \in \Omega^k (\wi{M})$ and $\alpha \in \Omega^\ell (M)$ be some differential forms of $\wi{M}$ and $M$, respectively.  Then, since $T\mc{M}=T\wi{M}\oplus TM$ defines a decomposition of the tangent bundle of $\mc{M}$, the following holds:\\
(1) \[
g_{\mc{M}} ( \wi{\alpha} \wedge {\alpha} , \wi{\alpha} \wedge {\alpha}) = \frac{(k+\ell)!}{k!\ell!} \ \wi{g} ( \wi{\alpha} , \wi{\alpha} )\cdot  g ({\alpha}, {\alpha}) \,.
\]
and consequently,
\[
\langle \wi{\alpha} \wedge {\alpha} , \wi{\alpha} \wedge {\alpha} \rangle_{\mc{M}} = \langle \wi{\alpha},   \wi{\alpha} \rangle_{\wi{M}} \cdot \langle {\alpha}, {\alpha} \rangle_{M}, \quad
 \|\wi{\al}^{k}\wedge\al^{\ell}\|_{\mc{M}}=\|\wi{\al}^{k}\|_{\wi{M}}\cdot\|\al^{\ell}\|_{M}.
 \]
(2) The action of the Hodge star operator  $\star_{11} : \Omega^{r}(\mc{M})\to \Omega^{11-r}(\mc{M})$ on $\wi{\al}^{k}\wedge\al^{\ell}$ reads as
\[
\star_{11}(\wi{\al}\wedge\al)=(-1)^{\ell(p-k)}\star_{p}\wi{\al}\wedge\star_{11-p}\al.
\]
\end{lemma}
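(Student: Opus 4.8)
\medskip
\noindent\textbf{Proof proposal.}
The plan is to reduce both assertions to the corresponding statements on the two factors: for part~(1) by passing to an adapted pseudo-orthonormal coframe, and for part~(2) by invoking the uniqueness of the Hodge dual. Fix a pseudo-orthonormal coframe $\{\wi\theta^1,\dots,\wi\theta^4\}$ on $\wi M$ with $\wi g(\wi\theta^a,\wi\theta^b)=\epsilon_a\delta^{ab}$ (exactly one $\epsilon_a=-1$) and an orthonormal coframe $\{\theta^1,\dots,\theta^7\}$ on $M$; then $\{\wi\theta^a\}\cup\{\theta^i\}$ is a pseudo-orthonormal coframe on $\mc M$ adapted to the orthogonal splitting $T^*\mc M=T^*\wi M\oplus T^*M$. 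Expanding $\wi\alpha=\sum_{|A|=k}\wi\alpha_A\,\wi\theta^A$ and $\alpha=\sum_{|I|=\ell}\alpha_I\,\theta^I$ in the induced monomial bases gives $\wi\alpha\wedge\alpha=\sum_{A,I}\wi\alpha_A\alpha_I\,\wi\theta^A\wedge\theta^I$, where the monomials $\wi\theta^A\wedge\theta^I$ are mutually orthogonal members of a monomial basis of $\bigwedge^{k+\ell}T^*\mc M$ with $g_{\mc M}(\wi\theta^A\wedge\theta^I,\wi\theta^A\wedge\theta^I)=(k+\ell)!\,\epsilon_A$ (the $\theta^I$-factors contribute $+1$ to the Gram determinant). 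Since the paper's normalisation gives $\wi g(\wi\theta^A,\wi\theta^A)=k!\,\epsilon_A$ and $g(\theta^I,\theta^I)=\ell!$, a one-line expansion yields $g_{\mc M}(\wi\alpha\wedge\alpha,\wi\alpha\wedge\alpha)=(k+\ell)!\big(\sum_A\epsilon_A\wi\alpha_A^2\big)\big(\sum_I\alpha_I^2\big)=\tfrac{(k+\ell)!}{k!\,\ell!}\,\wi g(\wi\alpha,\wi\alpha)\,g(\alpha,\alpha)$, which is the first identity; dividing by $(k+\ell)!$, resp.\ by $k!\,\ell!$, gives the asserted identities for $\langle\cdot,\cdot\rangle_{\mc M}$ and for the norms. (Alternatively, for decomposable $\wi\alpha$ and $\alpha$ one observes that the Gram matrix of the one-forms assembling $\wi\alpha\wedge\alpha$ is block-diagonal because $T^*\wi M\perp T^*M$, and extends bilinearly; the frame computation is the cleaner route.)

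For part~(2), recall that $\star_{11}\eta$ is the unique $(11-k-\ell)$-form with $\beta\wedge\star_{11}\eta=\langle\beta,\eta\rangle_{\mc M}\,\vol_{\mc M}$ for all $\beta\in\Omega^{k+\ell}(\mc M)$, and that $\vol_{\mc M}=\vol_{\wi M}\wedge\vol_M$. The first step is to note that it suffices to test this identity on forms of bidegree $(k,\ell)$, i.e.\ on $\beta=\wi\beta\wedge\gamma$ with $\wi\beta\in\Omega^k(\wi M)$, $\gamma\in\Omega^\ell(M)$: for a monomial test form of bidegree $(k',\ell')$ with $k'+\ell'=k+\ell$ and $(k',\ell')\neq(k,\ell)$ one has $k'>k$ or $\ell'>\ell$, so $\wi\beta'\wedge\star_4\wi\alpha$ or $\gamma'\wedge\star_7\alpha$ vanishes by degree and the left-hand side is $0$, while the right-hand side vanishes by orthogonality of the bigrading of $\bigwedge^\bullet T^*\mc M$. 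For $\beta=\wi\beta\wedge\gamma$ I would transpose the $\ell$-form $\gamma$ past the $(4-k)$-form $\star_4\wi\alpha$ to get $\wi\beta\wedge\gamma\wedge\big(\star_4\wi\alpha\wedge\star_7\alpha\big)=(-1)^{\ell(4-k)}(\wi\beta\wedge\star_4\wi\alpha)\wedge(\gamma\wedge\star_7\alpha)$, then apply the defining properties of $\star_4$ and $\star_7$ to obtain $(\wi\beta\wedge\star_4\wi\alpha)\wedge(\gamma\wedge\star_7\alpha)=\langle\wi\beta,\wi\alpha\rangle_{\wi M}\,\langle\gamma,\alpha\rangle_M\,\vol_{\wi M}\wedge\vol_M$, and finally invoke part~(1) in the form $\langle\wi\beta,\wi\alpha\rangle_{\wi M}\langle\gamma,\alpha\rangle_M=\langle\wi\beta\wedge\gamma,\wi\alpha\wedge\alpha\rangle_{\mc M}$. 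Thus $\beta\wedge\big[(-1)^{\ell(4-k)}\star_4\wi\alpha\wedge\star_7\alpha\big]=\langle\beta,\wi\alpha\wedge\alpha\rangle_{\mc M}\vol_{\mc M}$ for all admissible $\beta$, and uniqueness of the Hodge dual gives $\star_{11}(\wi\alpha\wedge\alpha)=(-1)^{\ell(4-k)}\star_4\wi\alpha\wedge\star_7\alpha$, i.e.\ the stated formula with $p=\dim\wi M=4$.

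I expect the only points that need care to be bookkeeping: keeping the signs $\epsilon_a$ straight through the frame expansion, pinning down the transposition sign $(-1)^{\ell(p-k)}$, and making the reduction to bidegree-$(k,\ell)$ test forms in part~(2) airtight. None of these is a real obstacle, and the whole argument goes through verbatim for an arbitrary pseudo-Riemannian product $(\wi M\times M,\wi g+g)$ once the signature signs are carried along.
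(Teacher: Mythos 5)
Your proof is correct. Note that the paper itself states Lemma \ref{work1} without proof, labelling it an ``elementary result,'' so there is no in-text argument to compare against; your write-up supplies exactly the verification the authors omitted, and it is consistent with their conventions ($\langle\phi,\psi\rangle=\det(\langle\phi_i,\psi_j\rangle)=\tfrac{1}{k!}h(\phi,\psi)$, the defining relation $\phi\wedge\star\psi=\langle\phi,\psi\rangle\vol$, and $\vol_{\mc M}=\vol_{\wi M}\wedge\vol_M$, the latter being the evident correction of the paper's typo ``$\vol_{\wi M}+\vol_M$''). The frame computation for (1) and the uniqueness/duality argument for (2) both check out, including the reduction to test forms of bidegree $(k,\ell)$ and the cancellation of the two factors of $(-1)^{\ell(4-k)}$.

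One small point of bookkeeping: in part (2) you ``invoke part (1) in the form $\langle\wi\beta\wedge\gamma,\wi\alpha\wedge\alpha\rangle_{\mc M}=\langle\wi\beta,\wi\alpha\rangle_{\wi M}\langle\gamma,\alpha\rangle_M$,'' but part (1) as stated is only the diagonal case $\wi\beta=\wi\alpha$, $\gamma=\alpha$, and recovering the full bilinear identity from the diagonal one by polarisation is slightly awkward here (polarising in $\wi\alpha$ and then in $\alpha$ yields a symmetrised combination rather than the individual cross term). The cleanest fix is to observe that your own frame expansion in part (1) proves the bilinear version directly: expanding both arguments in the monomial basis, the Gram determinant $\langle\wi\theta^A\wedge\theta^I,\wi\theta^B\wedge\theta^J\rangle$ is block-diagonal by the orthogonality $T^*\wi M\perp T^*M$ and equals $\langle\wi\theta^A,\wi\theta^B\rangle\langle\theta^I,\theta^J\rangle$, so the identity with two independent arguments comes for free. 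With that phrased as the content of the frame computation (rather than as a consequence of the stated diagonal identity), the argument is airtight.
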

\noindent Now we are ready to prove that

\begin{prop}\label{nice1}
For the     4-form on $\mc{M}=\wi{M}+{M}$ given by the Ansatz (\ref{maincase1}) with $f\in\bb{R}$, the  closure condition $(\mathscr{C})$ and the Maxwell equation  $(\mathscr{M})$ are simultaneously satisfied, if and only if
\begin{equation}\label{weakg2}
\dd F^{4}=0, \quad \text{and}\quad \dd\star_{7}{F}^{4}=f\cdot {F}^{4}.
\end{equation}
 In the case where $f=0$, then the equations $(\mathscr{C})$ and $(\mathscr{M})$  are simultaneously  satisfied if and only if the 4-form $F^{4}$ on $M^{7}$ is closed and co-closed, $dF^{4}=\dd{\star_{{7}}}F^{4}=0$.
 \end{prop}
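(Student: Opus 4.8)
The plan is to compute both sides of the closure and Maxwell equations explicitly for the Ansatz $\mc{F}^{4}=f\cdot\vol_{\wi{M}}+F^{4}$, using Lemma \ref{work1} to handle the Hodge star $\star_{11}$ on the product manifold. First I would observe that, since $\vol_{\wi{M}}$ is a top-degree form on the four-dimensional factor and $F^{4}$ is pulled back from $M$, the exterior derivative on $\mc{M}$ satisfies $\dd\mc{F}^{4}=\dd F^{4}$, because $\dd\vol_{\wi{M}}=0$ (it is top-degree) and $f$ is constant. Hence the closure condition $(\mathscr{C})$ is simply $\dd F^{4}=0$, the first equation in \eqref{weakg2}.

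Next I would turn to the Maxwell equation $(\mathscr{M})$: $\dd\star_{11}\mc{F}^{4}=\tfrac12\mc{F}^{4}\wedge\mc{F}^{4}$. For the right-hand side, expand $\mc{F}^{4}\wedge\mc{F}^{4}=(f\cdot\vol_{\wi{M}}+F^{4})\wedge(f\cdot\vol_{\wi{M}}+F^{4})$. The terms $f^{2}\vol_{\wi{M}}\wedge\vol_{\wi{M}}$ and $F^{4}\wedge F^{4}$ vanish — the first because $\vol_{\wi{M}}\wedge\vol_{\wi{M}}$ is an $8$-form on a $4$-manifold, the second because $F^{4}\wedge F^{4}$ is an $8$-form on a $7$-manifold — so only the cross terms survive, giving $\tfrac12\mc{F}^{4}\wedge\mc{F}^{4}=f\cdot\vol_{\wi{M}}\wedge F^{4}$. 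For the left-hand side, apply part (2) of Lemma \ref{work1} with $p=4$: writing $\vol_{\wi{M}}=\vol_{\wi{M}}\wedge 1$ and $F^{4}=1\wedge F^{4}$, one gets $\star_{11}\vol_{\wi{M}}=(-1)^{0}\star_{4}\vol_{\wi{M}}\wedge\star_{7}1=-\vol_{M}$ (using $\star_{4}\vol_{\wi{M}}=-1$ from the signature conventions) and $\star_{11}F^{4}=(-1)^{4\cdot 4}\star_{4}1\wedge\star_{7}F^{4}=\vol_{\wi{M}}\wedge\star_{7}F^{4}$. Therefore $\star_{11}\mc{F}^{4}=-f\cdot\vol_{M}+\vol_{\wi{M}}\wedge\star_{7}F^{4}$, and taking $\dd$ gives $\dd\star_{11}\mc{F}^{4}=-f\cdot\dd\vol_{M}+\vol_{\wi{M}}\wedge\dd\star_{7}F^{4}=\vol_{\wi{M}}\wedge\dd\star_{7}F^{4}$, again since $\vol_{M}$ is top-degree on $M$. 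Equating the two sides yields $\vol_{\wi{M}}\wedge\dd\star_{7}F^{4}=f\cdot\vol_{\wi{M}}\wedge F^{4}$, i.e. $\vol_{\wi{M}}\wedge(\dd\star_{7}F^{4}-f\cdot F^{4})=0$; since wedging with $\vol_{\wi{M}}$ is injective on forms pulled back from $M$, this is equivalent to $\dd\star_{7}F^{4}=f\cdot F^{4}$, the second equation in \eqref{weakg2}.

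For the final clause, setting $f=0$ the system \eqref{weakg2} becomes $\dd F^{4}=0$ and $\dd\star_{7}F^{4}=0$, which is exactly the statement that $F^{4}$ is closed and co-closed (equivalently, harmonic) on $M^{7}$. I would also remark that the conventions for $\star_{4}^{2}$ and $\star_{4}\vol_{\wi{M}}$ needed above are precisely those recorded in the Conventions paragraph and just before the statement, so no new sign computations are required.

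I do not expect a genuine obstacle here: the result is essentially a bookkeeping exercise once Lemma \ref{work1} is available. The only point demanding mild care is tracking the signs from the Lorentzian signature ($q=1$) in $\star_{4}\vol_{\wi{M}}=-1$ and in the Koszul-type sign $(-1)^{\ell(p-k)}$ of Lemma \ref{work1}(2); a sign error there would flip the constant $f$ but not the structure of the equivalence. The injectivity of $\wi{\alpha}\mapsto\vol_{\wi{M}}\wedge\wi{\alpha}$ on $\Omega^{\bullet}(M)$-valued forms — which lets one cancel the $\vol_{\wi{M}}$ factor — is immediate from the product structure $T\mc{M}=T\wi{M}\oplus TM$ and is the only slightly non-computational step.
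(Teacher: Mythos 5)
Your proposal is correct and follows essentially the same route as the paper's proof: both compute $\star_{11}\mc{F}^{4}=-f\cdot\vol_{M}+\vol_{\wi{M}}\wedge\star_{7}F^{4}$ via Lemma \ref{work1}, obtain $\tfrac12\mc{F}^{4}\wedge\mc{F}^{4}=f\cdot\vol_{\wi{M}}\wedge F^{4}$ from the cross terms, and equate to get $\vol_{\wi{M}}\wedge(\dd\star_{7}F^{4}-f\cdot F^{4})=0$. Your extra remarks on the injectivity of wedging with $\vol_{\wi{M}}$ and on the closure condition only make explicit what the paper leaves implicit.
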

\begin{proof} Let us compute  $\star_{11}\mc{F}$. We write $\mc{F}^{4}=f\cdot\vol_{\wi{M}}\wedge 1+ F^{4}\wedge \wi{1}$. Thus, by Lemma \ref{work1} and since $\omega_1\wedge\omega_2=(-1)^{st}\omega_2\wedge\omega_1$ for some $s$-form $\omega_1$ and $t$-form $\omega_2$, we conclude that
\begin{eqnarray*}
\star_{11}\mc{F}^{4}&=&\star_{4}(f\cdot\vol_{\wi{M}}\wedge 1)+\star_{7}(F^{4}\wedge \wi{1})=f\cdot\star_{4}(\vol_{\wi{M}}\wedge 1)+\star_{7}(\wi{1}\wedge F^{4})\\
&=&f\cdot[(-1)^{0(4-4)}\star_{4}\vol_{\wi{M}}\wedge \star_{7}1]+[(-1)^{4(4-0)}\star_{4}\wi{1}\wedge \star_{7}F^{4}]\\
&=&f\cdot[-\wi{1}\wedge\vol_{M}]+\vol_{\wi{M}}\wedge \star_{7}F^{4}\\
&=&-f\cdot\vol_{M}+\vol_{\wi{M}}\wedge \star_{7}F^{4},
\end{eqnarray*}
where we used the identity $\star_{4}\wi{1}=\vol_{\wi{M}}$. Thus $\star_{11}\mc{F}=-f\cdot\vol_{M}+\vol_{\wi{M}}\wedge \star_{7}{F}^{4}$ and consequently $ \dd\star_{11}\mc{F}=\vol_{\wi{M}}\wedge \dd\star_{7}{F}^{4}$. We also  compute $\mc{F}\wedge\mc{F}=2\cdot f\cdot \vol_{\wi{M}}\wedge  {F}^{4}$.
  Therefore, for our Ansatz (\ref{maincase1})  the Maxwell equation  $\dd\star_{11}\mc{F}=\frac{1}{2}\mc{F}\wedge\mc{F}$ is equivalent to $
\vol_{\wi{M}}\wedge \dd\star_{7}{F}^{4}=f\cdot \vol_{\wi{M}}\wedge {F}^{4}$,
 and our assertion is immediate.
\end{proof}


For the following, let us denote   the 3-form $ \star_7 {F}^{4}$  by $\phi:= \star_7 {F}^{4}$.
 Since the square of the star operator $\star_{7}$ acts by $\star_{7}^{2}\big|_{\Omega^{p}(M^{7})}=(-1)^{p(7-p)}\Id_{\Omega^{p}(M^{7})}$, we get that $\star_{7}\phi=\star_{7}^{2}F^{4}=(-1)^{4(7-4)}F^{4}=F^{4}$.  Thus, by Proposition \ref{nice1} we deduce that
\begin{corol}\label{req}
The Maxwell equation  $(\mathscr{M})$  for the 4-form $\mc{F}$ given by (\ref{maincase1}), i.e.  the second relation in (\ref{weakg2}),   is equivalent to the equation 
\begin{equation}\label{eq-dphi=sphi}
\dd {\phi} = f \star_7 {\phi},
\end{equation}
for the 3-form $\phi:= \star_7 {F}^{4}$.
Moreover, the closure condition $(\mathscr{C})$  is equivalent to the relation 
\begin{equation} \label{eq-dsphi=0}
 \dd \star_7 {\phi} =0.
\end{equation}
\end{corol}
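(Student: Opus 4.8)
The plan is to read Corollary~\ref{req} off directly from Proposition~\ref{nice1} together with the elementary identities for the square of the Hodge star operator on $M^7$, so no new geometric input is needed; this is purely a bookkeeping step. First I would recall that Proposition~\ref{nice1} tells us that, for the Ansatz (\ref{maincase1}), the pair of conditions $(\mathscr{C})$ and $(\mathscr{M})$ is equivalent to $\dd F^4=0$ together with $\dd\star_7 F^4 = f\cdot F^4$. The point is then to re-express these two conditions in terms of $\phi:=\star_7 F^4$ rather than $F^4$ itself.

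Next I would use $\star_7^2\big|_{\Omega^p(M^7)}=(-1)^{p(7-p)}\Id$, which for $p=4$ gives $(-1)^{4\cdot 3}=+1$, hence $\star_7\phi = \star_7^2 F^4 = F^4$. Substituting $F^4=\star_7\phi$ into the second equation of (\ref{weakg2}) turns $\dd\star_7 F^4 = f\cdot F^4$ into $\dd\phi = f\star_7\phi$, which is (\ref{eq-dphi=sphi}). For the closure condition, I would observe that $\dd F^4 = 0$ is literally $\dd\star_7\phi = 0$, i.e.\ (\ref{eq-dsphi=0}); alternatively one notes that $\dd\star_7\phi = 0$ is exactly the co-closedness of $\phi$, and since $\phi=\star_7 F^4$ this is equivalent to $\dd F^4=0$ by applying $\star_7$ and using that $\star_7$ commutes with $\dd$ up to a sign on a fixed degree. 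Either way the two displayed equations in the corollary are precisely the two conditions of Proposition~\ref{nice1} rewritten.

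There is no real obstacle here: the only thing to be careful about is tracking the sign $(-1)^{p(7-p)}$ correctly so that $\star_7\phi=+F^4$ (and not $-F^4$), since the sign $f$ in (\ref{eq-dphi=sphi}) would be affected otherwise; the degree count $4\cdot 3=12$ is even, so the sign is indeed $+1$ and the constant $f$ carries over unchanged. One should also note that the equivalence is genuinely two-way: given $\phi$ satisfying (\ref{eq-dphi=sphi}) and (\ref{eq-dsphi=0}), setting $F^4:=\star_7\phi$ recovers a closed $4$-form with $\dd\star_7 F^4 = \dd\star_7\star_7\phi = \dd\phi = f\star_7\phi = f\cdot F^4$, so Proposition~\ref{nice1} applies in reverse. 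This closes the argument.
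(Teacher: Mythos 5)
Your argument is correct and is essentially identical to the paper's: one computes $\star_7^2\big|_{\Omega^4}=(-1)^{4\cdot 3}\Id=\Id$, so $\star_7\phi=F^4$, and then substitutes into the two conditions of Proposition~\ref{nice1}. (The only blemish is your parenthetical ``alternative'' for the closure condition --- $\star_7$ does not commute with $\dd$ up to a sign in general --- but your primary observation that $\dd F^4=\dd\star_7\phi$ holds verbatim, since $F^4=\star_7\phi$, is exactly what is needed.)
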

\noindent This motivates us to introduce the following definition.

\begin{definition}
    \textnormal{A  3-form $\phi \in  \Omega^3(M)$ on  a Riemannian 7-manifold $(M,g)$  is called} {\it special}   \textnormal{if  it is  co-closed ($\dd \star_{7}\phi =0$) and satisfies the relation  $\dd \phi =  f\star_{7}\phi$ for some constant  $f\in\bb{R}$.}
     \end{definition}

\noindent In  terms of special 3-forms, Corollary \ref{req} reads as follows: 
\begin{corol}\label{special3}
 The 4-form $ \mc{F}=f\cdot\vol_{\wi{M}}+F^{4}\in\Omega^{4}_{\rm cl}(\mc{M})$ for some constant $f$ and closed 4-form $F^{4}\in \Omega^{4}_{\rm cl}(M^{7})$,
is a solution of Maxwell equation $(\mathscr{M})$ if and only if $\phi:=\star_{7}F$ is a special 3-form on $M^{7}$. %
\end{corol}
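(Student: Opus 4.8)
The plan is to observe that Corollary \ref{special3} is a direct restatement of Corollary \ref{req}, so essentially no new work is needed --- the proof amounts to tracking through the equivalences already established and matching them against the \emph{Definition} of a special 3-form. First I would recall from Proposition \ref{nice1} that, for the Ansatz $\mc{F}^{4}=f\cdot\vol_{\wi{M}}+F^{4}$ with $F^{4}\in\Omega^{4}_{\rm cl}(M^{7})$ already closed, the pair of supergravity conditions $(\mathscr{C})$ and $(\mathscr{M})$ collapses to the single equation $\dd\star_{7}F^{4}=f\cdot F^{4}$; the closure $\dd\mc{F}^{4}=0$ is automatic because $\vol_{\wi{M}}$ is top-degree on $\wi{M}$ and $F^{4}$ is assumed closed. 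Then, setting $\phi:=\star_{7}F^{4}$ and using $\star_{7}^{2}\big|_{\Omega^{3}(M^{7})}=\Id$ (so that $\star_{7}\phi=F^{4}$), this equation becomes exactly $\dd\phi=f\star_{7}\phi$, which is equation (\ref{eq-dphi=sphi}).

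Next I would address the co-closedness half of the \emph{Definition}. One must check $\dd\star_{7}\phi=0$; but $\star_{7}\phi=F^{4}$, so this is precisely the hypothesis $F^{4}\in\Omega^{4}_{\rm cl}(M^{7})$, which is part of the standing Ansatz (\ref{maincase1}). Hence the condition ``$\phi$ is special'' --- meaning $\dd\star_{7}\phi=0$ together with $\dd\phi=f\star_{7}\phi$ for some constant $f$ --- holds if and only if $F^{4}$ is closed (built into the Ansatz) and $(\mathscr{M})$ holds in the form (\ref{eq-dphi=sphi}). Assembling these two observations gives the stated equivalence: $\mc{F}=f\cdot\vol_{\wi{M}}+F^{4}$ solves $(\mathscr{M})$ iff $\phi:=\star_{7}F^{4}$ is a special 3-form.

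There is no real obstacle here; the only point requiring a word of care is the bookkeeping of what is hypothesis versus conclusion. The statement of Corollary \ref{special3} carries ``$F^{4}\in\Omega^{4}_{\rm cl}(M^{7})$'' as a standing assumption, and this is exactly what supplies the co-closedness $\dd\star_{7}\phi=0$ demanded by the definition of \emph{special}; so the corollary is not asserting that $(\mathscr{M})$ alone forces co-closedness, but rather that, within the class of decomposable fluxes of type (\ref{maincase1}), solving $(\mathscr{M})$ is equivalent to $\phi$ being special. With that understood the proof is a one-line dereference of Corollary \ref{req} and the definition, and I would simply write: ``This is an immediate reformulation of Corollary \ref{req} in the language of special 3-forms, using that $\star_{7}\phi=F^{4}$ and that $F^{4}$ is closed by assumption.''
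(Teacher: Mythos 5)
Your proposal is correct and coincides with the paper's own treatment: the paper offers no separate proof of Corollary \ref{special3}, presenting it as an immediate reformulation of Corollary \ref{req} (itself a consequence of Proposition \ref{nice1}) once one notes $\star_{7}\phi=F^{4}$ and that the co-closedness $\dd\star_{7}\phi=0$ required by the definition of a special 3-form is exactly the standing hypothesis $F^{4}\in\Omega^{4}_{\rm cl}(M^{7})$. Your bookkeeping of which half of ``special'' comes from the Maxwell equation and which from the closure hypothesis is precisely the right reading.
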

 \vskip 0.3cm

\noindent {\bf On the Einstein supergravity equation $(\mathscr{E})$.}
 For the computations related to the  right hand side of the Einstein supergravity equation  $(\mathscr{E})$   we use the following basic lemma.

\begin{lemma}\label{lem2}
Let $\phi$ be a $k$-form on a smooth pseudo-Riemannian manifold $(M^{p,q}, g)$ of signature $(p,q)$ with $p+q=n$. When $1 \leq k \leq n-1$, we have
\begin{align}\label{eq-to-show2}
(-1)^q \langle X \lrcorner \star \phi , Y \lrcorner \star \phi \rangle & = \langle \phi, \phi \rangle \langle X , Y \rangle -  \langle X \lrcorner \phi , Y \lrcorner \phi \rangle \, , & \mbox{for all vector fields $X$ and $Y$.}
\end{align}
When $k=n$, we have
\begin{align}\label{eq-to-show1}
\langle X \lrcorner \phi , Y \lrcorner \phi \rangle & = \langle \phi , \phi \rangle \langle X , Y \rangle \, , & \mbox{for all vector fields $X$ and $Y$.}
\end{align}
\end{lemma}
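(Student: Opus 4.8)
The plan is to reduce both identities to pointwise linear-algebra statements on the fiber $V = T_xM$ with its nondegenerate inner product of signature $(p,q)$, so it suffices to prove them for a single tangent vector, say $X = Y = e_a$, a member of a pseudo-orthonormal basis, and then polarize. Since both sides of each identity are symmetric bilinear in $X,Y$, establishing the diagonal case $X=Y=e_a$ together with the off-diagonal case $X = e_a$, $Y=e_b$ with $a\neq b$ suffices; in fact it is cleaner to fix $X$ and work with the contraction operator $X\lrcorner$ and its adjoint $X^\flat \wedge$, which up to sign is how $\star$ intertwines contraction and exterior multiplication.

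The first key step is the commutation identity
\[
\star(X \lrcorner \psi) = (-1)^{k-1}\, X^\flat \wedge \star\psi, \qquad \star(X^\flat \wedge \psi) = (-1)^{k}\, X \lrcorner \star\psi,
\]
valid for a $k$-form $\psi$ on an $n$-manifold (the precise signs to be pinned down using the excerpt's conventions, in particular $\star\star = (-1)^{k(n-k)+q}$ and the normalization $\langle \vol^{(n)},\vol^{(n)}\rangle = (-1)^q$). Applying this with $\psi = \phi$ a $k$-form gives $X \lrcorner \star\phi = \pm\,\star(X^\flat \wedge \phi)$, hence
\[
\langle X \lrcorner \star\phi,\, Y \lrcorner \star\phi\rangle = \langle \star(X^\flat\wedge\phi),\, \star(Y^\flat\wedge\phi)\rangle = (-1)^q\,\langle X^\flat\wedge\phi,\, Y^\flat\wedge\phi\rangle,
\]
using that $\star$ is an isometry up to the factor $(-1)^q$ on the relevant degree. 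So the problem becomes computing $\langle X^\flat \wedge\phi,\, Y^\flat\wedge\phi\rangle$.

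The second key step is the wedge-contraction expansion: for a $1$-form $\xi$ and a $k$-form $\phi$, one has $\xi^\sharp \lrcorner (\xi \wedge \phi) = \langle \xi,\xi\rangle\,\phi - \xi \wedge (\xi^\sharp \lrcorner \phi)$, and more generally the adjunction $\langle \xi \wedge \phi, \eta \wedge \phi\rangle = \langle \xi,\eta\rangle\langle\phi,\phi\rangle - \langle \xi \wedge(\eta^\sharp\lrcorner\phi),\,\phi\rangle$ after using that $\eta^\sharp\lrcorner$ is the metric adjoint of $\eta\wedge$. Writing $\xi = X^\flat$, $\eta = Y^\flat$ and using the adjoint relation once more turns $\langle X^\flat\wedge(Y\lrcorner\phi),\phi\rangle$ into $\langle Y\lrcorner\phi,\, X\lrcorner\phi\rangle$, yielding $\langle X^\flat\wedge\phi,\,Y^\flat\wedge\phi\rangle = \langle X,Y\rangle\langle\phi,\phi\rangle - \langle X\lrcorner\phi,\,Y\lrcorner\phi\rangle$, which combined with the displayed isometry gives \eqref{eq-to-show2}. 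For the top-degree case \eqref{eq-to-show1}, $\phi = c\,\vol^{(n)}$ for a function $c$, so $X\lrcorner\phi = c\,(X\lrcorner\vol^{(n)})$ and one checks directly that $\langle X\lrcorner\vol^{(n)},\,Y\lrcorner\vol^{(n)}\rangle = (-1)^q\langle X,Y\rangle = \langle\vol^{(n)},\vol^{(n)}\rangle\langle X,Y\rangle$, e.g.\ by evaluating on a pseudo-orthonormal basis; multiplying by $c^2$ gives the claim (note the term $X^\flat\wedge\phi$ vanishes in top degree, consistent with \eqref{eq-to-show2} degenerating).

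The main obstacle I expect is bookkeeping the signs consistently with the paper's sign conventions — the factors of $(-1)^q$, $(-1)^{k(n-k)+q}$, and the $(-1)^{\ell(p-k)}$-type signs from Lemma \ref{work1} — rather than any conceptual difficulty; in particular one must be careful that the metric adjoint of $\xi\wedge(-)$ is exactly $\xi^\sharp\lrcorner(-)$ with no extra sign under this paper's pairing $\langle\phi,\psi\rangle = \tfrac{1}{k!}h(\phi,\psi)$, and that the isometry property of $\star$ carries the stated $(-1)^q$. A clean alternative that sidesteps some of this is to verify \eqref{eq-to-show2} on a pseudo-orthonormal basis directly: decompose $\phi = X^\flat \wedge \beta + \gamma$ with $\beta = X\lrcorner\phi$ and $\gamma = X\lrcorner(X^\flat\wedge\phi)/\langle X,X\rangle$ not involving $X^\flat$ (assuming $\langle X,X\rangle\neq0$; the null case follows by continuity or density), compute $\star\phi$ and $X\lrcorner\star\phi$ block-wise, and read off both sides — but this still requires the same care with signs, so I would present the operator-identity proof as the primary argument and only fall back to the basis computation if a sign refuses to cooperate.
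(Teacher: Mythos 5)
Your proof is correct, but it takes a genuinely different route from the paper's. The paper argues pointwise on a pseudo-orthonormal basis: it asserts ``by invariance'' that $\langle X\lrcorner\star\phi, Y\lrcorner\star\phi\rangle$ must have the form $a\,\langle\phi,\phi\rangle\langle X,Y\rangle + b\,\langle X\lrcorner\phi, Y\lrcorner\phi\rangle$ for universal constants $a,b$, and then pins down $a=(-1)^q$, $b=-(-1)^q$ by evaluating on $\phi = e^{i_1}\wedge\cdots\wedge e^{i_k}$ and $X=Y=e_r$, distinguishing $r\in I$ versus $r\in J$ and spacelike versus timelike $e_r$. You instead derive the identity structurally from three operator facts: the commutation $X\lrcorner\star\phi = \pm\,\star(X^\flat\wedge\phi)$, the relation $\langle\star\alpha,\star\beta\rangle = (-1)^q\langle\alpha,\beta\rangle$ (which follows from $\star\star=(-1)^{k(n-k)+q}$ and the defining property of $\star$), and the adjointness of $X^\flat\wedge(-)$ and $X\lrcorner(-)$ combined with the antiderivation identity $X\lrcorner(Y^\flat\wedge\phi)=\langle X,Y\rangle\,\phi - Y^\flat\wedge(X\lrcorner\phi)$; chaining these gives exactly \eqref{eq-to-show2}, and I have checked that each of the three ingredients holds with the stated signs under the paper's conventions. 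A pleasant feature of your route is that the one genuinely convention-sensitive sign --- the one in the commutation identity --- occurs in both slots of the inner product and so cancels, which is why your ``$\pm$'' is harmless; moreover the paper's ``by invariance'' step (implicitly, that the space of such equivariant bilinear expressions is two-dimensional) is left unjustified there, whereas your derivation needs no such input, at the cost of having to establish the operator identities in arbitrary signature. Note also that your chain of identities applies verbatim when $k=n$: then $X^\flat\wedge\phi=0$ and $X\lrcorner\star\phi=0$, so \eqref{eq-to-show2} degenerates to $0=\langle\phi,\phi\rangle\langle X,Y\rangle-\langle X\lrcorner\phi,Y\lrcorner\phi\rangle$, which is precisely \eqref{eq-to-show1}; your separate computation with $\phi=c\,\vol^{(n)}$ is correct but could be absorbed into the main argument.
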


\begin{proof}
It suffices to prove \eqref{eq-to-show2} and \eqref{eq-to-show1} by taking $X$ and $Y$ to be basis elements at a point. Let us fix an orthonormal basis $\{ e_i \}_{i=1, \ldots , n}$ with $\langle e_i , e_j \rangle = \delta_{ij}$ for $1 \leq i,j \leq p$, and $\langle e_i , e_j \rangle = -\delta_{ij}$ for $p+1 \leq i,j \leq p+q$. Denote by $\{ e^i \}_{i=1, \ldots , n}$ the corresponding dual basis such that the volume form is given by $\vol =  e^1 \wedge \ldots \wedge e^n$. For any $1 \leq k \leq n$, the $k$-forms $\{ e^{i_1} \wedge \ldots \wedge e^{i_k} \}$ constitute a basis for $\bigwedge^k T M$ orthonormal with respect to   the natural  extension   $\langle \cdot , \cdot \rangle$   of  the  metric , i.e.\
\begin{align*}
\langle e^{i_1} \wedge \ldots \wedge e^{i_k} , e^{i_1} \wedge \ldots \wedge e^{i_k}\rangle & =
(-1)^u \, ,
\end{align*}
where $u$ is the number of  timelike $1$-forms among the $\{ e^i \}$. In the following discussion,  $\{ i_1 , \ldots , i_n \}$ will denote an even permutation of $\{ 1 \ldots n \}$. For any $1 \leq k \leq n$, set $I=\{ i_1 , \ldots , i_k\}$ and $J=\{ i_{k+1} , \ldots , i_n\}$ so that $I \cap J = \emptyset$. Then
\begin{align*}
\star ( e^{i_1} \wedge \ldots \wedge e^{i_k} ) = (-1)^u e^{i_{k+1}} \wedge \ldots \wedge e^{i_n}.
\end{align*}

\noindent Let us deal with the case $1 \leq k \leq n-1$ first. By invariance, we may assume that
\[
 \langle X \lrcorner \star \phi , Y \lrcorner \star \phi \rangle  =a \langle \phi, \phi \rangle \langle X , Y \rangle +b \langle X \lrcorner \phi , Y \lrcorner \phi \rangle \,
 \]
for some $a, b\in\bb{R}$. To determine $a$ and $b$ we choose $\phi$ be a basis element, i.e.\ $\phi = e^{i_1} \wedge \ldots \wedge e^{i_k}$. It is also clear that if $X$ and $Y$ are linearly independent, then each term of this expression vanishes. Hence, we may take $X = Y = e_r$  for any $1 \leq r \leq n$.
Then, it is easy to check the following:
\begin{itemize}
\item If $r \in I$, then we have $0 = (-1)^u a + (-1)^{u} b$ when $e_r$ is spacelike, and $0 = a (-1)^u (-1)  +  b (-1)^{u-1}$ when $e_r$ is timelike,  so we must deduce $a=-b$ in both cases.
\item If $r \in J$, then we have $(-1)^{q-u} = a (-1)^u + 0$ when $e_r $ is spacelike, and $(-1)^{q-u-1} = a (-1)^u (-1) + 0$ when $e_r $ is timelike. Hence, in both cases, $a = (-1)^q$.
\end{itemize}
Therefore, $a= (-1)^q$ and $b = -(-1)^q$, which proves the claim.  We leave it to the reader to check \eqref{eq-to-show1}, which is completely analogous (here, one takes $\phi$ to be $e^1 \wedge \ldots \wedge e^n$).
\end{proof}

\noindent Applying Lemma \ref{lem2}   in our case, we obtain the following useful corollary.
\begin{corol}\label{impo1}
The 4-forms $\wi{F}^{4}=f\cdot\vol_{\wi{M}}\in\Omega^{4}(\wi{M})$ and  $F^{4}=\star_{7}\phi\in\Omega^{4}_{\rm cl}(M)$ satisfy the following relations
\begin{eqnarray*}
\langle  X\lrcorner \wi{F}, Y\lrcorner\wi{F}\rangle_{\wi{M}} &=&f^{2}\|\vol_{\wi{M}}\|^{2}_{\wi{M}}\wi{g}(X, Y)=-f^{2}\wi{g}(X, Y), \quad\forall \ X, Y\in\Gamma(T\wi{M}), \\
 \langle  X\lrcorner  {F} , Y\lrcorner  {F} \rangle_{M} &=& g(X , Y)\|{\phi}\|^{2}_{M} - \langle X\lrcorner {\phi} , Y\lrcorner {\phi} \rangle_{M}, \quad\forall\ X, Y\in\Gamma(T{M}).
\end{eqnarray*}
Moreover,  $\|{F}\|_{M}^2 =\|\star_{7}\phi\|^{2}_{M}= \| {\phi} \|^2_{M}$ and
\[
 \|\mc{F}\|_{\mc{M}}^{2}=\langle \mc{F}, \mc{F}\rangle_{\mc{M}}= \langle f \cdot \vol_{\wi{M}}  + F^4, f\cdot \vol_{\wi{M}} + F^4\rangle_{\mc{M}} = -f^{2}+\|F^{4}\|^{2}_{M}.
\]
 \end{corol}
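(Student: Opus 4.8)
The plan is to apply Lemma \ref{lem2} separately on the two factors $(\wi{M}^{3,1},\wi{g})$ and $(M^{7},g)$, and then combine the results using Lemma \ref{work1} and the structure of the Ansatz $(\ast)$. First I would treat $\wi{F}^{4}=f\cdot\vol_{\wi{M}}$. Since $\wi{F}^{4}$ is a top-degree form on the four-dimensional Lorentzian manifold $\wi{M}$, this is exactly the situation of \eqref{eq-to-show1} in Lemma \ref{lem2} with $k=n=4$; applied to $\phi=\wi{F}^{4}$ it gives $\langle X\lrcorner\wi{F},Y\lrcorner\wi{F}\rangle_{\wi{M}}=\langle\wi{F},\wi{F}\rangle_{\wi{M}}\,\wi{g}(X,Y)=f^{2}\|\vol_{\wi{M}}\|^{2}_{\wi{M}}\,\wi{g}(X,Y)$, and by the normalisation convention $\|\vol_{\wi{M}}\|^{2}_{\wi{M}}=\langle\vol_{\wi{M}},\vol_{\wi{M}}\rangle_{\wi{M}}=(-1)^{q}=-1$ since $q=1$. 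This yields the first displayed identity.

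Next I would treat $F^{4}=\star_{7}\phi$ on the Riemannian 7-manifold $M$. Here $F^{4}$ is a $4$-form with $1\le 4\le 6$, so \eqref{eq-to-show2} of Lemma \ref{lem2} applies; with $q=0$ and writing $F^{4}=\star_{7}\phi$ it reads
\[
\langle X\lrcorner\star_{7}\phi,Y\lrcorner\star_{7}\phi\rangle_{M}=\langle\phi,\phi\rangle_{M}\langle X,Y\rangle_{M}-\langle X\lrcorner\phi,Y\lrcorner\phi\rangle_{M},
\]
which is precisely the second displayed identity once one recalls $\langle X,Y\rangle_{M}=g(X,Y)$ and $\langle\phi,\phi\rangle_{M}=\|\phi\|^{2}_{M}$. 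Setting $X=Y$ in an orthonormal frame and summing (or, equivalently, noting that the Hodge star is an isometry on forms) gives $\|F\|^{2}_{M}=\|\star_{7}\phi\|^{2}_{M}=\|\phi\|^{2}_{M}$, which is the third claim; here I would just invoke $\star_{7}^{2}|_{\Omega^{3}}=\Id$ together with $\phi\wedge\star_{7}\phi=\langle\phi,\phi\rangle\vol_{M}$ to see the star preserves the pointwise inner product.

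Finally, for the norm of the full flux form, I would expand $\langle\mc{F},\mc{F}\rangle_{\mc{M}}=\langle f\cdot\vol_{\wi{M}}+F^{4},\,f\cdot\vol_{\wi{M}}+F^{4}\rangle_{\mc{M}}$. The cross terms vanish because $f\cdot\vol_{\wi{M}}$ lives in $\bigwedge^{4}T^{*}\wi{M}$ while $F^{4}$ lives in $\bigwedge^{4}T^{*}M$, and under the decomposition $T\mc{M}=T\wi{M}\oplus TM$ these sit in orthogonal summands of $\bigwedge^{4}T^{*}\mc{M}$; this is implicit in Lemma \ref{work1}(1) (the pairing of a pure $\wi{M}$-form with a pure $M$-form of the same total degree but different bidegree is zero). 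Then $\langle\mc{F},\mc{F}\rangle_{\mc{M}}=f^{2}\langle\vol_{\wi{M}},\vol_{\wi{M}}\rangle_{\mc{M}}+\langle F^{4},F^{4}\rangle_{\mc{M}}=-f^{2}+\|F^{4}\|^{2}_{M}$, again using $\langle\vol_{\wi{M}},\vol_{\wi{M}}\rangle=-1$ and the fact that the $\mc{M}$-inner product restricted to forms pulled back from $M$ agrees with the $M$-inner product (a special case of Lemma \ref{work1}(1) with the $\wi{M}$-factor being the constant $1$). There is no real obstacle here; the only point requiring a little care is bookkeeping the sign $(-1)^{q}$ with $q=1$ on the Lorentzian factor and making sure Lemma \ref{lem2} is invoked in the correct degree range ($k=n$ for $\wi{F}$, $1\le k\le n-1$ for $F$) — everything else is a direct substitution.
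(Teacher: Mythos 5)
Your proposal is correct and follows exactly the route the paper intends: the corollary is stated as a direct application of Lemma \ref{lem2} (case \eqref{eq-to-show1} with $k=n=4$, $q=1$ on the Lorentzian factor, and case \eqref{eq-to-show2} with $q=0$ on the Riemannian factor), combined with the orthogonality of the two bidegree components coming from Lemma \ref{work1}. The only cosmetic point is that on the $M^{7}$ side the lemma is being invoked for the $3$-form $\phi$ (so $k=3$), not for the $4$-form $F^{4}$, but since both degrees lie in the range $1\le k\le n-1$ this does not affect the argument.
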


 Now, for the Lorentzian manifold   $(\mc{M}=\wi{M}\times M, g_{\mc{M}}=\wi{g}+g)$ the Levi-Civita connection $\nabla^{g_{\mc{M}}}$ splits as $\nabla^{g_{\mc{M}}} = \nabla^{\wi{g}} + \nabla^g$, where $\nabla^{\wi{g}}$ and $\nabla^{g}$ are the Levi-Civita connections on $(\wi{M}, \wi{g})$ and $(M, g)$, respectively. This effects on the Ricci tensor $\Ric^{g_{\mc{M}}}$ of $\nabla^{g_{\mc{M}}}$, which  splits accordingly, i.e.
\begin{align*}
\Ric^{g_{\mc{M}}}(X,Y) & = 0 \, , & \mbox{for any vector field $X$ on $\wi{M}$ and $Y$ on $M$,} \\
\Ric^{g_{\mc{M}}}(X,Y) & = \Ric^{\wi{g}} (X,Y) \, , & \mbox{for any vector field $X, Y$ on $\wi{M}$,} \\
\Ric^{g_{\mc{M}}}(X,Y) & = \Ric^{g}(X,Y) \, , & \mbox{for any vector field $X, Y$ on $M$.}
\end{align*}
 Initially we examine  the  Einstein supergravity equation $(\mathscr{E})$ for  some  vector fields $X,Y$ on $\wi{M}$. In this case for the  Lorentzian 4-manifold $(\wi{M}, \wi{g})$ we deduce that
\begin{prop}\label{true1?}
 Let  $(\wi{M}, \wi{g}, \wi{F}^{4}=f\cdot\vol_{\wi{M}})$ be the four-dimensional Lorentzian manifold of an eleven-dimensional  supergravity background of the form $(\mc{M}=\wi{M}\times M, g_{\mc{M}}=\wi{g}+g)$, where the flux 4-form $\mc{F}$ is given by (\ref{maincase1}), with $f\in\bb{R}$. Then, $(\wi{M}, \wi{g})$ is  Einstein with negative Einstein constant $\Lambda:= - \frac{1}{6} \left(2 f^2
+ \| {\phi} \|^2 \right)$.  In particular, $\|\phi\|$ is constant.
\end{prop}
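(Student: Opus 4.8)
The plan is to feed the Einstein supergravity equation $(\mathscr{E})$ into the product structure, testing it only against vector fields $X,Y$ tangent to the Lorentzian factor $\wi M$. As recalled just above the statement, the Levi--Civita connection of $g_{\mc M}=\wi g+g$ splits, so for such $X,Y$ one has $\Ric^{g_{\mc M}}(X,Y)=\Ric^{\wi g}(X,Y)$; thus the left-hand side of $(\mathscr{E})$ restricted to $\wi M$ is simply the Ricci tensor of $(\wi M,\wi g)$.

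For the right-hand side the decisive remark is that contracting a vector field $X$ on $\wi M$ with the flux form $\mc F=f\cdot\vol_{\wi M}+F^{4}$ annihilates the $M$-summand: since $F^{4}\in\Omega^{4}(M)$ and $X$ is a section of the complementary subbundle $T\wi M\subset T\mc M$, we get $X\lrcorner F^{4}=0$, whence $X\lrcorner\mc F=f\,X\lrcorner\vol_{\wi M}=X\lrcorner\wi F$. Corollary \ref{impo1} (which is the top-degree case $k=n=4$, $q=1$ of Lemma \ref{lem2}) then gives $\langle X\lrcorner\mc F,Y\lrcorner\mc F\rangle_{\mc M}=-f^{2}\,\wi g(X,Y)$, and the same corollary gives $\|\mc F\|^{2}_{\mc M}=-f^{2}+\|F^{4}\|^{2}_{M}=-f^{2}+\|\phi\|^{2}_{M}$. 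Substituting into $(\mathscr{E})$ and collecting terms (using $-\tfrac12 f^{2}+\tfrac16 f^{2}=-\tfrac13 f^{2}$) yields
\[
\Ric^{\wi g}(X,Y)=\Bigl(-\tfrac13 f^{2}-\tfrac16\|\phi\|^{2}\Bigr)\wi g(X,Y)=-\tfrac16\bigl(2f^{2}+\|\phi\|^{2}\bigr)\,\wi g(X,Y),
\]
which exhibits $(\wi M,\wi g)$ as Einstein with constant $\Lambda=-\tfrac16(2f^{2}+\|\phi\|^{2})$. Since $f^{2}\geq 0$ and $\|\phi\|^{2}_{M}\geq 0$ (the metric on $M$ being Riemannian), $\Lambda\leq 0$, with strict negativity unless $f=0$ and $\phi=0$.

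It remains to argue that $\|\phi\|$ is constant, which is a feature of the product structure rather than of any differential equation. The displayed identity holds at every point $(\tilde m,m)\in\wi M\times M$; its left-hand side and the factor $\wi g(X,Y)$ on the right depend only on $\tilde m$, whereas $\Lambda=-\tfrac16\bigl(2f^{2}+\|\phi\|^{2}(m)\bigr)$ depends only on $m$. Fixing $\tilde m$ and a vector $X$ with $\wi g(X,X)\neq 0$ forces $\Lambda(m)=\Ric^{\wi g}_{\tilde m}(X,X)/\wi g_{\tilde m}(X,X)$ to be independent of $m$; hence $\Lambda$ is a genuine constant and $\|\phi\|^{2}=-6\Lambda-2f^{2}$ is constant on $M$.

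I do not anticipate a real obstacle: everything reduces to the observation that $X\lrcorner F^{4}=0$ for $X\in\Gamma(T\wi M)$, together with Corollary \ref{impo1}, plus a one-line separation-of-variables argument for the constancy of $\|\phi\|$. The only place demanding care is the bookkeeping of the numerical coefficients $\tfrac12$ and $\tfrac16$ in $(\mathscr{E})$ and keeping track of the sign $\|\vol_{\wi M}\|^{2}_{\wi M}=(-1)^{q}=-1$ coming from the Lorentzian signature.
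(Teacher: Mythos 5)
Your proof is correct and follows essentially the same route as the paper: restrict $(\mathscr{E})$ to vector fields tangent to $\wi{M}$, use $X\lrcorner F^{4}=0$ together with Corollary \ref{impo1} to evaluate the contraction and norm terms, and collect coefficients to get $\Lambda=-\tfrac16(2f^{2}+\|\phi\|^{2})$. Your separation-of-variables argument for the constancy of $\|\phi\|$ is a welcome explicit justification of what the paper dismisses with ``follows easily.''
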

\begin{proof}
Since we can always write $F=\star_{7}\phi$ for some (co-closed) 3-form $\phi$ on $M^{7}$, the proof is based on the previous observations.   In particular, a direct computation in combination with  Corollary \ref{impo1}, shows that
\begin{eqnarray*}
\Ric^{\wi{g}}(X,Y) &=& \frac{1}{2} \langle f\cdot X\lrcorner \vol_{\wi{M}} , f\cdot Y\lrcorner \vol_{\wi{M}}  \rangle_{\wi{M}}  - \frac{1}{6} \wi{g}(X,Y) \left( \| f\cdot\vol_{\wi{M}} \|^2_{\wi{M}}+ \| {F} \|^2_{M} \right)\\
&=&- \frac{1}{2} f^2 \wi{g} (X,Y)
+ \frac{1}{6}\wi{g}(X,Y) \left( f^2  - \|F \|^2_{M} \right) \nonumber \\
& =&  \frac{1}{6} \left( -2 f^2
- \|F \|_{M}^2 \right)  \wi{g}(X,Y) = \frac{1}{6} \left(-2f^2
- \| {\phi} \|^2 \right) \wi{g}(X, Y).
\end{eqnarray*}
The constancy of $\|\phi\|$ follows easily.
\end{proof}
\noindent Therefore, the supergravity Einstein equation $(\mathscr{E})$ for the specific flux form $\mc{F}^{4}$ given by (\ref{maincase1}), forces the  Lorentzian 4-manifold $(\wi{M}, \wi{g})$ to be  Einstein.  We mention that this occurs independently of the closure condition $(\mathscr{C})$ for $\mc{F}$, or the Maxwell equation $(\mathscr{M})$,  so it is independent of the notion of special 3-forms. However, it yields the constraint   $\|\phi\|=\text{constant}$.

Let us restrict now the supergravity Einstein equation  $(\mathscr{E})$ on vector fields $X, Y\in\Gamma(TM^{7})$. Since $F=\star_{7}\phi$, by Corollary \ref{impo1} it follows that
 \begin{eqnarray*}
 \Ric^{g} (X,Y) & = &
\frac{1}{2} \langle  X\lrcorner F , Y\lrcorner F \rangle_{M}
- \frac{1}{6}g(X,Y) \left(-f^{2}  + \|F\|_{M}^2 \right) \nonumber \\
& =& \frac{1}{2} \langle  X\lrcorner \star_{7}\phi , Y\lrcorner \star_{7}\phi \rangle_{M}
+ \frac{1}{6} g(X,Y) \left(f^{2}  -  \|F\|^{2}_{M} \right) \nonumber\\
&=&\frac{1}{2}\big(g(X , Y)\cdot \langle {\phi} , {\phi} \rangle_{M} - \langle X\lrcorner {\phi} , Y\lrcorner {\phi} \rangle_{M}\Big)+ \frac{1}{6} g(X,Y) \left(f^{2}  -  \|F \|^{2}_{M} \right) \nonumber\\
&=&\frac{1}{2}g(X, Y)\|\phi\|^{2}_{M}-\frac{1}{2}\langle X\lrcorner {\phi} , Y\lrcorner {\phi} \rangle_{M}+ \frac{1}{6} g(X,Y) \left( f^{2}  -  \|\phi \|^{2}_{M} \right) \nonumber\\
&=&- \frac{1}{2} \langle  X\lrcorner {\phi} , Y\lrcorner  {\phi} \rangle_{M}
+ \frac{1}{6}g(X,Y) \left(f^2  + 2 \| {\phi} \|^2_{M} \right). 
\end{eqnarray*}

 \noindent Thus, one can write
 \begin{equation}\label{Rricg}
 \Ric^{g}(X, Y)= \frac{1}{6}g(X,Y) \left(f^2  + 2 \| {\phi} \|^2_{M} \right)+q_{\phi}(X, Y),
 \end{equation}
 where $q_{\phi}(X, Y)$ is the symmetric bilinear form  $q_{\phi}(X, Y):= -\frac{1}{2}\langle  X\lrcorner {\phi} , Y\lrcorner  {\phi} \rangle_{M}$.

\noindent  Hence, motivated  by the results in this paragraph, we   introduce  the following definition:  
\begin{definition}
\textnormal{A   Riemannian 7-manifold    $(M^{7}, g, \phi)$  with  a special 3-form  $\phi$   is  called a} {\it  special gravitational    Einstein manifold} \textnormal{if  the pair  $(g,\phi)$ is  a  solution of  the supergravity Einstein  equation (\ref{Rricg}).}
\end{definition}
\begin{remark}\textnormal{
Note that a special gravitational    Einstein 7-manifold is not necessarily an Einstein manifold,   since $q_{\phi}$ is not necessarily a multiple of the metric  tensor $g$. In particular, (\ref{Rricg}) is an extension of the Einstein equation by   a  stress-energy tensor   associated  to the 3-form $\phi$.}
\end{remark}
\medskip
By Proposition \ref{nice1} (or Corollary \ref{special3}) and Proposition \ref{true1?}, it is obvious that the pair
  \[
 (g_{\mc{M}}=\wi{g}+g, \mc{F}^{4}= f\cdot \vol_{\wi{M}} +  {F}^{4}),
 \]
  where the closed 4-form $F^{4}$ is given by $F^{4}=\star_{7}\phi$ for some special 3-form $\phi$ on $M^{7}$, $g$ is a gravitational special Einstein metric and $\wi{g}$ a Lorentzian Einstein metric,  induces  solutions of eleven-dimensional  supergravity  on $\mc{M}^{10, 1}=\wi{M}^{3, 1}\times M^{7}$, which we shall call {\it  $(4, 7)$-decomposable solutions of eleven-dimensional  supergravity}. In this case,  $\mc{M}=\wi{M}\times M$ will be referred by the term {\it   $(4, 7)$-decomposable supergravity background}. We conclude that  
 \begin{corol}
   Any   $(4,7)$-decomposable  solution  $(\mathcal{M}^{10, 1}, g_{\mc{M}}, \mc{F})$  of eleven-dimensional supergravity, is  a product of Lorentzian Einstein 4-manifold  $(\wi{M}^{3, 1}, \wi{g})$ with negative Einstein constant   and  a gravitational  special  Einstein  7-manifold  $(M^7, g)$ with  special 3-form $\phi \in \Omega^3(M^7)$. In particular, the flux  4-form is given by $\mathcal{F} = f\cdot \vol_{\wi{M}} +F^{4}$ for some closed 4-form  $F^{4}:=\star_{7}\phi\in\Omega_{\rm cl}^{4}(M^{7})$ and
 some constant $f\in\bb{R}$.
\end{corol}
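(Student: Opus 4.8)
The plan is to assemble the corollary purely by bookkeeping, citing the three structural results already established earlier in the section, since by this point all the analytic content has been extracted. First I would observe that a $(4,7)$-decomposable solution is, \emph{by definition}, a triple $(\mc{M}^{10,1}=\wi{M}^{3,1}\times M^7, g_{\mc{M}}=\wi{g}+g, \mc{F}^4)$ on which the full supergravity system $(\mathscr{C})$, $(\mathscr{M})$, $(\mathscr{E})$ holds, with $\mc{F}^4$ of the decomposable form $(\ref{maincase1})$, i.e.\ $\mc{F}^4 = f\cdot\vol_{\wi{M}} + F^4$ with $F^4\in\Omega^4_{\rm cl}(M^7)$ and $f\in\bb{R}$ constant. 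So there is really nothing to ``prove'' beyond translating each of the three field equations through the results of the section.

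Next I would run through the three equations in turn. For $(\mathscr{C})$ and $(\mathscr{M})$: by Proposition \ref{nice1} (equivalently Corollary \ref{special3}), these two together are equivalent to $\dd F^4 = 0$ and $\dd\star_7 F^4 = f\cdot F^4$; setting $\phi:=\star_7 F^4\in\Omega^3(M^7)$ and using $\star_7^2 = \Id$ on $4$-forms so that $F^4 = \star_7\phi$, Corollary \ref{req} rephrases this as $\dd\star_7\phi = 0$ together with $\dd\phi = f\star_7\phi$ — that is, $\phi$ is a special $3$-form on $M^7$. For $(\mathscr{E})$ restricted to $T\wi{M}$: Proposition \ref{true1?} gives that $(\wi{M},\wi{g})$ is Einstein with Einstein constant $\Lambda = -\tfrac16(2f^2 + \|\phi\|^2) < 0$ (in particular $\|\phi\|$ is forced to be constant, which is what makes the statement of the corollary well-posed). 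For $(\mathscr{E})$ restricted to $TM^7$: the displayed computation leading to $(\ref{Rricg})$ shows that $\Ric^g(X,Y) = \tfrac16 g(X,Y)(f^2 + 2\|\phi\|^2_M) + q_\phi(X,Y)$, i.e.\ $(g,\phi)$ solves the gravitational special Einstein equation, so $(M^7,g,\phi)$ is a gravitational special Einstein manifold in the sense of the Definition. The mixed components of $(\mathscr{E})$ are automatically satisfied because $\Ric^{g_{\mc{M}}}$ has no mixed part on a metric product, as recorded in the splitting of $\Ric^{g_{\mc{M}}}$ above.

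Finally I would assemble: a $(4,7)$-decomposable solution is the product of the Lorentzian Einstein $4$-manifold $(\wi{M}^{3,1},\wi{g})$ with negative Einstein constant and the gravitational special Einstein $7$-manifold $(M^7,g)$ carrying the special $3$-form $\phi=\star_7 F^4$, and the flux is $\mc{F} = f\cdot\vol_{\wi{M}} + F^4$ with $F^4 = \star_7\phi \in\Omega^4_{\rm cl}(M^7)$ and $f\in\bb{R}$ constant, exactly as claimed. I do not anticipate a genuine obstacle here: the only point requiring a word of care is that one must invoke the constancy of $\|\phi\|$ (from Proposition \ref{true1?}) before the phrase ``negative Einstein constant'' is even meaningful, and one should note explicitly that the vanishing of the mixed Ricci components means the product structure imposes no further constraint — everything else is a direct citation of Propositions \ref{nice1}, \ref{true1?} and equation $(\ref{Rricg})$.
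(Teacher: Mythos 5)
Your proposal is correct and follows essentially the same route as the paper: the corollary is assembled by citing Proposition \ref{nice1} (equivalently Corollary \ref{special3}) for the closure and Maxwell equations, Proposition \ref{true1?} for the Lorentzian Einstein condition with negative constant, and the computation leading to (\ref{Rricg}) for the gravitational special Einstein equation on $M^7$, with the mixed Ricci components vanishing by the product splitting. Your explicit remarks on the constancy of $\|\phi\|$ and the mixed components are sensible clarifications but do not change the argument.
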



\subsection{Three basic types of $(4, 7)$-decomposable supergravity backgrounds}

We now consider  three basic  classes  of  special 3-forms on  Riemannian 7-manifolds, namely
\begin{itemize}
\item[(I)]  trivial 3-form, i.e. $\phi =0$ (and hence $F=0$) but  $f\neq 0$.
\item[(II)]   non-zero  harmonic   3-form, i.e.  $\phi \neq 0$, $f=0$.
\item[(III)]  non-harmonic  3-form, i.e.    $\phi \neq 0$, $f\neq 0$.
\end{itemize}
Let us examine the  construction of
   solutions   of the supergravity  Einstein  equation (\ref{Rricg})  for any of these types of special 3-forms (or equivalently the associated flux 4-forms, which will be referred by the same name), and  present   the corresponding special gravitational    Einstein manifolds $(M^{7}, g, \phi)$ and some    examples. 
    We begin with the first type.
   \begin{corol}
The  equation (\ref{Rricg})  for special 3-forms of Type I reduces to the   standard Einstein  equation, i.e.   $\Ric^{g} =(f^2/6) g$.
Consequently,  using  the  flux   4-form      $\mathcal{F}  =f\cdot\vol_{\wi{M}}$ we obtain a $(4, 7)$-decomposable supergravity  background, given by a product  of  a Lorentzian Einstein  4-manifold $(\wi{M}^{3,1}, \wi{g})$  with Einstein constant $-f^{2}/{3}$, and  a Riemannian  Einstein  7-manifold
$(M^7, g)$ with Einstein  constant $f^2/6$.
\end{corol}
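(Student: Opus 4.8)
The plan is to simply substitute the defining condition of Type I, namely $\phi = 0$ (and hence $F^4 = \star_7\phi = 0$), into the general supergravity Einstein equation \eqref{Rricg} derived above, and then to read off the geometric consequences on each factor.

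First I would treat the seven-dimensional factor. Recall that \eqref{Rricg} reads
\[
\Ric^{g}(X, Y)= \frac{1}{6}g(X,Y) \left(f^2  + 2 \| {\phi} \|^2_{M} \right)+q_{\phi}(X, Y),
\qquad q_{\phi}(X, Y)= -\tfrac{1}{2}\langle  X\lrcorner {\phi} , Y\lrcorner  {\phi} \rangle_{M}.
\]
Setting $\phi = 0$ kills both $\|\phi\|^2_M$ and $q_\phi$ identically, so \eqref{Rricg} collapses to $\Ric^{g}(X,Y) = \tfrac{1}{6} f^2\, g(X,Y)$, i.e. $(M^7,g)$ is Einstein with Einstein constant $f^2/6$. (Note $f \neq 0$ by hypothesis, so the constant is genuinely positive and the equation is non-vacuous.) For the four-dimensional factor I would invoke Proposition \ref{true1?} directly: it already tells us, independently of $(\mathscr{C})$ and $(\mathscr{M})$, that $(\wi{M},\wi{g})$ is Einstein with constant $\Lambda = -\tfrac{1}{6}(2f^2 + \|\phi\|^2)$; specializing to $\phi = 0$ gives $\Lambda = -f^2/3$, which is negative since $f\neq 0$. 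Finally, the closure condition $(\mathscr{C})$ is automatic here: $F^4 = 0$ is trivially closed, and the Maxwell equation in the form \eqref{weakg2}, namely $\dd\star_7 F^4 = f\cdot F^4$, reads $0 = 0$; so $\phi = 0$ is indeed a special 3-form for every $f$, and by the Corollary preceding this statement the pair $(g_{\mc{M}} = \wi{g}+g,\ \mc{F}^4 = f\cdot\vol_{\wi{M}})$ gives a $(4,7)$-decomposable supergravity background. Assembling these three observations yields the claim.

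There is essentially no obstacle: this is a degenerate special case of the machinery already assembled in the excerpt, and the only thing to be careful about is bookkeeping the sign and the factor of $6$ versus $3$ in the two Einstein constants (the discrepancy is simply the ratio $\dim - 2$ entering the trace of the Ricci tensor on the two factors — it is consistent and nothing needs to be reconciled). I would present the argument in three short lines: (i) $\phi = 0 \Rightarrow$ \eqref{Rricg} becomes $\Ric^g = (f^2/6)g$; (ii) Proposition \ref{true1?} with $\phi = 0$ gives $\Ric^{\wi g} = -(f^2/3)\wi g$; (iii) $(\mathscr{C})$ and $(\mathscr{M})$ hold vacuously, so the preceding Corollary applies and produces the stated background.
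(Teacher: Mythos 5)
Your proof is correct and follows essentially the same route the paper takes (the corollary is an immediate substitution of $\phi=0$ into equation \eqref{Rricg} and Proposition \ref{true1?}, with $(\mathscr{C})$ and $(\mathscr{M})$ holding trivially since $F^4=0$). The only quibble is your parenthetical "ratio $\dim-2$" explanation of the two Einstein constants, which is not an accurate account of where the factors $-1/3$ and $1/6$ come from — but since you obtain both constants by direct substitution into the already-derived formulas, this aside carries no logical weight and the argument stands.
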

\noindent Therefore,    flux forms of type $\mathcal{F}  =f\cdot\vol_{\wi{M}}$ with $f\in\bb{R}^{*}$,  induce  $(4, 7)$-decomposable supergravity backgrounds  by choosing a Lorentzian Einstein  4-manifold $(\wi{M}^{3, 1}, \wi{g})$ and a compact  Einstein  7-manifold $(M^7, g)$.

We treat now special 3-forms of Type II. In this case the flux form $\mc{F}$ is given by $\mc{F}=\star_{7}\phi=:F^{4}$. 
\begin{corol}
The    equation  (\ref{Rricg})    for   a   special harmonic  3-form  $\phi \neq 0$ on  $M^7$ of Type II,  reduces  to  the equation
\[
   \Ric^{g}= \frac13 \|\phi\|_{M}^2 g  -\frac{1}{2}q_{\phi},\quad   q_{\phi}(X,X) = ||X \lrcorner \phi||_{M}^2.
\]
Moreover, $(\wi{M}^{3, 1}, \wi{g})$  is Einstein with Einstein constant $-\|\phi\|^{2}/6$.
\end{corol}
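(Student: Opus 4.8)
The plan is to obtain the statement as a direct specialisation to $f=0$ of the general formulae already established in this subsection. A special $3$-form $\phi$ of Type~II satisfies $\dd\phi = f\star_7\phi = 0$ together with $\dd\star_7\phi = 0$, so $\phi$ is both closed and co-closed, hence harmonic, and likewise $F^4 = \star_7\phi$ is closed and co-closed; thus the Ansatz (\ref{maincase1}) with $f = 0$ is legitimate and each computation leading to (\ref{Rricg}) and to Proposition~\ref{true1?} goes through unchanged.

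First I would substitute $f = 0$ into the Einstein supergravity equation (\ref{Rricg}). The $f^2$-term disappears and what remains is
\[
\Ric^g(X,Y) = \tfrac13\, g(X,Y)\,\|\phi\|_M^2 + q_\phi(X,Y), \qquad q_\phi(X,Y) = -\tfrac12\,\langle X\lrcorner\phi,\, Y\lrcorner\phi\rangle_M,
\]
whose value on the diagonal is $\Ric^g(X,X) = \tfrac13\|\phi\|_M^2\, g(X,X) - \tfrac12\,\|X\lrcorner\phi\|_M^2$. Re-expressing the last term through the positive semi-definite tensor normalised by $q_\phi(X,X) = \|X\lrcorner\phi\|_M^2$ — that is, pulling the factor $-\tfrac12$ out in front of $q_\phi$ — yields exactly $\Ric^g = \tfrac13\|\phi\|_M^2\, g - \tfrac12\, q_\phi$, as asserted.

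For the claim about $(\wi M^{3,1},\wi g)$, I would simply appeal to Proposition~\ref{true1?}, which gives that $(\wi M,\wi g)$ is Einstein with Einstein constant $\Lambda = -\tfrac16\bigl(2f^2 + \|\phi\|^2\bigr)$ and that $\|\phi\|$ is constant; setting $f = 0$ produces $\Lambda = -\|\phi\|^2/6$, the constancy of $\|\phi\|$ guaranteeing that this is a genuine Einstein constant.

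I do not expect any real obstacle here: the corollary is a specialisation of already-proven results. The only step meriting attention is the bookkeeping of the normalisation constant in $q_\phi$ — one must be consistent about whether $q_\phi$ denotes the form $-\tfrac12\langle X\lrcorner\phi, Y\lrcorner\phi\rangle_M$ of (\ref{Rricg}) or the associated positive tensor $\langle X\lrcorner\phi, Y\lrcorner\phi\rangle_M$ used in the statement — but this is a matter of convention only and does not affect the content.
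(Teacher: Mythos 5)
Your proposal is correct and matches the paper's (implicit) argument exactly: the corollary is a direct substitution of $f=0$ into (\ref{Rricg}) and into the Einstein constant $\Lambda=-\tfrac16(2f^2+\|\phi\|^2)$ of Proposition \ref{true1?}. You also correctly flag the only subtlety, namely that the corollary's $q_\phi$ is the positive tensor $\langle X\lrcorner\phi,Y\lrcorner\phi\rangle_M$ rather than the $-\tfrac12$-normalised one used in (\ref{Rricg}), which accounts for the explicit factor $-\tfrac12$ in the displayed formula.
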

\begin{remark}
\textnormal{Apriori,  we  may  consider  a    generic Type II special 3-form $\phi$. However,  such a 3-form is parallel     and  in Section \ref{G2struct}  we will show that    it   does   not induce      $(4, 7)$-decomposable supergravity backgrounds. }
\end{remark}
\begin{example}
\textnormal{Consider the  Riemannian product    $(M^7:= Q^3 \times P^4,  g = g_Q + g_P )$ between a 3-dimensional Riemannian manifold  $(Q^{3}, g_{Q})$ and a 4-dimensional  Riemannian manifold $(P^{4}, g_{P})$.  Assume that $M^{7}$ admits a special 3-form $\phi$, given by  $\phi:= \vol_Q$, where $\vol_{Q}$ is the  is   volume  3-form on  the  first factor, with  $\|\phi\|^{2}=\|\vol_{Q}\|^{2}=1$.
 Then  $\langle X\lrcorner\vol_{Q}, Y\lrcorner\vol_{Q}\rangle=g_{Q}(X, Y)$ for any $X, Y\in\Gamma(TM^{7})$. Hence  the  supergravity Einstein  equation becomes
\[
 \Ric^{g} = \frac{1}{3}g-\frac{1}{2}g_{Q},
 \]
and we conclude that $\Ric^{g_Q} = - \frac16 g_Q$ and $\Ric^{g_P} =\frac13 g_P$.
Therefore, the  manifolds  $Q, P$ must be Einstein manifolds  with Einstein  constant $-\frac16$ and  $\frac13$, respectively.
 Assume now  that  our initial metric  $g$ is  complete.    Then,    $Q$  is a complete  space  of constant  negative    curvature (i.e.  a quotient  $\mathbb{R}H^3/\Gamma$ of   the Lobachevski  space   $\mathbb{R}H^3$  by  a  lattice)  and     $P $ is  a  compact Einstein   4-manifold. Note  that    the manifold  $M^7$ is  compact if   $\Gamma$ is  a  co-compact lattice.
  So  we get  an example of  decomposable  supergravity background   of  Type II,  with   internal  space   $M^7 = Q^3 \times P^4$  and space-time
 any Lorentzian  Einstein  4-manifold $\wi{M}^{3, 1}$ with Einstein constant $-1/6$.}

\end{example}

The supergravity Einstein    equation  (\ref{Rricg})  for  a 7-manifold  $(M^7,g, \phi)$ where   $\phi$  is  a special  3-form of Type III, i.e.  a non-harmonic  3-form, remains unchanged.
In  the  next  section    we  study   this   case  under  the  assumption  that   $\phi$ is  a generic   3-form.

\section{$(4, 7)$-decomposable supergravity backgrounds of Type III associated  to $\G_2$-geometries}\label{G2struct}

Let us fix  the decomposable flux form $\mc{F}^{4}=f\cdot\vol_{\wi{M}}+\star_{7}\phi$, where $\phi=\star_{7}F^{4}$ is a special 3-form. Here  we  examine the situation  where $\phi$ is in addition    {\it generic}. To this end,  it will be  useful to refresh some notions of $\G_2$-structures (see \cite{Br1, Br2,  FKMS, Joy1} for more  details). 

\subsection{The Lie group $\G_2$ and $\G_2$-structures}\label{g2back}
  A 7-dimensional oriented Riemannian manifold  $(M^{7}, g)$ is called a $\G_2$-manifold whenever the structure group of its frame bundle $\SO(M, g)$ is  reduced   to   the   exceptional   compact    Lie  group   $\G_2\subset\SO_{7}$.    Recall that  the Lie   group  $\G_2$  has dimension 14 and  traditionally is defined as  the automorphism group of the octonion  algebra $\bb{O}$.  It  is also      defined  as  the stabilizer $G_{\omega}:=\{\al\in\Gl_{7}(\bb{R}) : \al\omega=\omega\}$ of a {\it generic} 3-form $\omega$  on $\bb{R}^{7} =  \mathrm{Im} \bb{O}$,  with  respect to  the  natural action  of   the  group $\Gl_7(\mathbb{R})$.
\begin{definition}
\textnormal{A 3-form $\omega \in  \bigwedge^{3}(\bb{R}^{7})^{*}$ is called}  {\it   generic} \textnormal{if its  stabilizer   $G_{\omega}$  in  $\Gl_7(\mathbb{R})$  is  the Lie group $\G_2$.}
\end{definition}
  A differential 3-form $\omega$  on  a 7-manifold  $M$ is   generic  if its  value  at   any point is  a generic  3-form.  Since   $\dim G_{\omega}=\dim\Gl_{7}(\bb{R})-\dim\bigwedge^{3}(\bb{R}^{7})^{*}=49-35=14$,
   the $\Gl_7(\mathbb{R})$-orbit  $\Omega^3_+$ of  a  generic 3-form    is open.   Another open    $\Gl_7(\mathbb{R})$-orbit is   the  orbit  $\Omega^3_{-}$  of  a 3-form with  stabilizer   the normal real form $\G_2^*$  of  $\G_2$,   which is  defined in terms of  splittable  octonions (see  \cite{Kath, Le2} for  more  details on   $\G_2^{*}$).  A generic 3-form  $\omega$  determines an  Euclidean metric $g$ by the rule
\[
 g(X, Y)\vol_{M}=-\frac{1}{6}(X\lrcorner \omega)\wedge (Y\lrcorner\omega)\wedge\omega,
 \]
 for any $X, Y\in\Gamma(TM)$. In terms of an appropriate  $g$-orthonormal basis   of co-vectors  $\{e^i\}$, $\omega$ has the form
 \begin{equation}\label{3form}
 \omega^{3}:=e^{127}+e^{347}+e^{567}+e^{135}-e^{245}-e^{146}-e^{236},
 \end{equation}
 where  $e^{ijk} = e^i \wedge e^j \wedge e^k$  denotes    the    wedge product of $e^{i}, e^{j}, e^{k}$.    A generic   3-form  $\omega$ on a 7-manifold  $M$  induces a $\G_2$-structure,  i.e. a subbundle of $\SO(M, g)$  which is  defined by  frames  $\{e_i\}$  with respect to  which  $\omega$ has   the   above   canonical   form (\ref{3form}), and  conversely any $\G_2$-structure   defines  a  generic   3-form.  So,  we may identify  a $\G_2$-structure  with the   corresponding   generic  3-form $\omega$.   We finally recall that  the existence of  a $\G_2$-structure   implies   the following restrictions on  the topology of  $M^{7}$:

\begin{prop}   \textnormal{(\cite[Prop.~3.2]{FKMS} or \cite{Joy1}[Prop. 3.6.2, Prop. 10.1.6])}      \label{topology}
The  existence   of a $\G_2$-structure  on a connected 7-dimensional manifold $M^{7}$   is equivalent to the vanishing of the first and the second Stiefel-Whitney classes of $M^{7}$ and hence equivalent to the existence of a  spin structure.
\end{prop}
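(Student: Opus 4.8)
The plan is to prove the two implications separately and then invoke the standard cohomological characterization of spin structures.

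For the direction ``$\G_2$-structure $\Rightarrow$ $w_1(M)=w_2(M)=0$'': by definition a $\G_2$-structure is a reduction of the structure group of the frame bundle of $M^7$ to $\G_2$, which we may take inside $\SO_7$ since a generic $3$-form determines a metric and an orientation (as recalled above). Because $\G_2$ is connected, any such reduction already lands in $\Gl_7^+(\bb{R})$, so $M^7$ is orientable, i.e.\ $w_1(M)=0$. For $w_2$, I would use that $\G_2$ is simply connected: the restriction of the double cover $\Spin_7\to\SO_7$ over the subgroup $\G_2\subset\SO_7$ is a double cover of $\G_2$, which must be the trivial (disconnected) one since $\pi_1(\G_2)=0$; hence $\G_2$ lifts to a subgroup of $\Spin_7$ mapping isomorphically onto $\G_2\subset\SO_7$. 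Consequently the principal $\SO_7$-bundle underlying the $\G_2$-structure lifts to a principal $\Spin_7$-bundle, so $M^7$ is spin and $w_2(M)=0$.

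Conversely, assume $w_1(M)=w_2(M)=0$, so $M^7$ admits a spin structure; fix one and let $\Sigma\to M$ be the associated real spinor bundle, which has rank $8$ and structure group $\Spin_7$ acting through the real $8$-dimensional spin representation. The key algebraic input is that $\Spin_7$ acts transitively on the unit sphere $S^7\subset\Sigma_x\cong\bb{R}^8$ with isotropy subgroup exactly $\G_2$, i.e.\ $\Spin_7/\G_2\cong S^7$. Given this, I would run an elementary obstruction-theory argument to produce a unit spinor field: a nowhere-vanishing section of $\Sigma$ is the same as a section of the associated sphere bundle with fibre $S^7$, and since $\pi_k(S^7)=0$ for $0\le k\le 6$ while $\dim M=7$, the only potential obstruction lies in $H^8(M;\pi_7(S^7))=H^8(M;\bb{Z})=0$. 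Hence a unit spinor $\psi$ exists, and its pointwise stabilizer inside the $\Spin_7$-structure is, by the transitivity statement, a principal $\G_2$-subbundle of the frame bundle, i.e.\ a $\G_2$-structure on $M^7$; equivalently, $\psi$ produces a generic $3$-form through the standard formula $\omega(X,Y,Z)=\langle X\cdot Y\cdot Z\cdot\psi,\psi\rangle$. The last clause of the statement is then immediate: an orientable manifold is spin precisely when $w_2$ vanishes, and orientability is $w_1=0$.

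The only genuinely non-formal ingredient — and the step to treat carefully — is the identification $\Spin_7/\G_2\cong S^7$ together with the fact that the isotropy is \emph{exactly} $\G_2$ and not some larger subgroup; this can be obtained either from the representation theory of $\Spin_7$ (the spin representation is $8$-dimensional and the stabilizer of a unit vector is a $14$-dimensional connected subgroup, forcing it to be $\G_2$) or concretely from the octonionic model. Everything else is soft: connectedness and simple-connectedness of $\G_2$, the lifting of a simply connected subgroup along $\Spin_7\to\SO_7$, and the vanishing of $H^8$ of a $7$-manifold. I would refer to \cite{FKMS} and \cite{Joy1} for the detailed spinorial construction.
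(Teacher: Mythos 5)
Your argument is correct, and it is essentially the proof given in the cited sources: the paper itself offers no proof of Proposition \ref{topology} but quotes it from \cite{FKMS} and \cite{Joy1}, where the forward direction is exactly the lifting of the connected, simply connected subgroup $\G_2\subset\SO_7$ to $\Spin_7$, and the converse is the identification of a $\G_2$-structure with a unit section of the rank-$8$ real spinor bundle, which exists by obstruction theory because $\Spin_7/\G_2\cong S^7$ is $6$-connected and $\dim M^7=7$. The only ingredient requiring care, the transitivity of the spin representation of $\Spin_7$ on $S^7$ with isotropy exactly $\G_2$, is correctly identified and justified in your write-up.
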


 \begin{definition}
\textnormal{A $\G_2$-manifold  $(M^{7}, g, \omega)$  is called}
  \begin{itemize}
 \item  {\it parallel}, \textnormal{if $d\omega=0=d\star_{7}\omega$,}
 \item {\it weak $\G_2$}, \textnormal{if there exists $\lambda\in\bb{R}\backslash\{0\}$ such that $\dd\omega=\lambda\star_{7}\omega$ (and thus $\dd\star_{7}\omega=0)$,}
 \item {\it co-callibrated}, \textnormal{ if  $d\star_{7}\omega=0$.}
   \end{itemize}
 \end{definition}  
\noindent  	When     $({M}^{7}, g, \omega^{3})$ is a parallel $\G_2$-manifold, then   there exists a  $\nabla^{g}$-parallel spinor and hence  $({M}^{7}, g)$ is $\Ric^{g}$-flat \cite{Wang2}.   On the other hand,   the existence of  a  weak  $\G_2$-structure on a  compact 7-manifold $({M}^{7}, g)$ is equivalent to the existence of a spin structure carrying  a {\it real Killing spinor}  \cite{FKMS}, i.e. a non-trivial section $\varphi\in\Gamma(\Sigma^{g}M)$ of   the spinor bundle $\Sigma^{g}M$ over $M$ satisfying the equation   $\nabla^{g}_{X}\varphi=\lambda X\cdot\varphi$,
 for any $X\in\Gamma(TM)$ and some $0\neq \lambda\in\bb{R}$, where here $\nabla^{g}$ represents the spinorial Levi-Civita connection.   Thus, compact weak $\G_2$-manifolds are singled out by the fact that admit Killing spinors and hence are Einstein manifolds with positive scalar curvature, i.e. (see \cite{FKMS}),

\begin{equation}\label{weak2}
\Ric^{g} (X,Y) =\frac{3}{8} \lambda^2 g(X , Y), \, \quad \forall \ X, Y   \in   \Gamma(TM^{7}).
\end{equation}


\begin{remark}\textnormal{
Compact weak  $G_2$-manifolds  $(M^{7},\varphi, g)$   
admit an   equivalent  description in terms of   the metric   cone  $(\hat{M} = \mathbb{R}\times M^{7}, \hat{g}=  \dd r^2 + r^2g)$ over $M^{7}$. Since $(M^{7},\varphi,g)$ admits  Killing spinors, $(\hat{M}, \hat{g})$  admits   parallel   spinors  and   hence  has   holonomy  group $\mathrm{Hol}(\hat M) \subset  \mathrm{Spin}_7$. In particular, if  $(M^{7}, \varphi, g)$ is simply-connected and not isometric to the standard sphere, then  the  inclusions   $\Sp_2 \subset  \SU_4 \subset \Spin_7$  yield  the following    three natural  classes of
weak  $\G_2$-manifolds:
\begin{itemize}
\item If $\mathrm{Hol}(\hat M ) = \Sp_2 $,   then     $M^{7}$ is   called   3-Sasakian and  it  has a 3-dimensional  space  of  Killing  spinors.
\item If $\mathrm{Hol}(\hat M) = \SU_4 $,   then     $M^{7}$ is   called   Sasaki-Einstein manifold  and  it has a  2-dimensional  space of Killing  spinors.
\item If $\mathrm{Hol}(\hat M) = \Spin_7$,   then      $M^{7}$ is   called proper weak  $G_2$-manifold, with 1-dimensional  space of    Killing  spinors.
\end{itemize}}
\end{remark}

\subsection{$(4, 7)$-decomposable supergravity solutions  induced by weak  $\G_2$-structures}\label{sol}
Let us explain now how the above theory applies in supergravity equations and gives rise to special $(4, 7)$-decomposable supergravity backgrounds.
    Let   ${\phi}\equiv {\phi}^{3}$   be  a  {\it generic}   3-form on   $M^7$, i.e. assume that $(M^{7}, \phi)$ is a $\G_2$-manifold.
      We  will     normalise  $\phi$ such  that $\|{\phi} \|^2_{M} =\langle \phi, \phi\rangle_{M}=7$. Then  the identity $  \langle X \lrcorner {\phi} ,  Y \lrcorner {\phi} \rangle = 3 g(X, Y)$   holds, see  \cite{Br2} .
 Therefore,  equation (\ref{Rricg}) reduces to
\begin{equation}\label{hypothetic}
\Ric^{g} (X,Y)
= \frac{1}{6} \left(f^2  + 5 \right) g(X,Y),
\end{equation}
for any   $X, Y\in\Gamma(TM^{7})$.   Based on the previous description of weak $\G_2$-structures, Proposition \ref{nice1} (or Corollary \ref{special3}) and the relations (\ref{weak2}) and (\ref{hypothetic}),   we   check that when the associated flux 4-form $\mc{F} = f\cdot \vol_{\wi{M}} + \star_7  \phi  $   is a solution of the  supergravity Einstein   equations $(\mathscr{E})$,  then   it needs to hold  $f=\pm 2$.
Thus we obtain the following
\begin{theorem}\label{mainresult}
Let $\mc{M}^{10, 1}$ be the  oriented Lorentzian manifold given by  the product of a four-dimensional oriented Lorentzian manifold $(\wi{M}^{3, 1} , \wi{g})$ with volume form $\vol_{\wi{M}}$   and a seven-dimensional oriented manifold $M^{7}$ admitting a $\G_2$-structure  $\phi\in \Omega^{3}_{+}(M)$, such that  $\|{\phi} \|^2 = 7$. Define
\[
\mc{F}^{4}_\pm :=  \pm 2 \vol_{\wi{M}} + \star_7  {\phi}.
\]
 Then $(\mc{M}, g_{\mc{M}}=\wi{g}+g,\mc{F}^{4}_\pm)$, where $g$ is the  Riemannian metric on $M$ corresponding to $\phi$, gives rise to a pair of $(4, 7)$-decomposable supergravity backgrounds  if and only if  $({M}^{7}, \phi)$  is  a    weak $\G_2$-manifold
   and $(\wi{M}^{3, 1}, \wi{g})$ is Lorentz Einstein with negative Einstein constant $\Lambda:= -15/6$.   
\end{theorem}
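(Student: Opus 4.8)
The plan is to substitute the Ansatz $\mc{F}^{4}_{\pm}=f\cdot\vol_{\wi{M}}+\star_{7}\phi$, with $f=\pm 2$ and the normalisation $\|\phi\|^{2}=7$, into the three supergravity field equations $(\mathscr{C})$, $(\mathscr{M})$, $(\mathscr{E})$ one at a time, and to show that jointly they amount to the two asserted conditions. Write $F^{4}:=\star_{7}\phi$; since $\phi$ is generic, $\star_{7}F^{4}=\star_{7}^{2}\phi=\phi$ and $\|F^{4}\|^{2}_{M}=\|\phi\|^{2}_{M}=7$, so $\mc{F}^{4}_{\pm}$ is of the decomposable type (\ref{maincase1}) as soon as $F^{4}$ is closed --- something that will itself fall out of the analysis rather than be assumed.

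First I would dispose of $(\mathscr{C})$ and $(\mathscr{M})$ together. By Proposition \ref{nice1} (equivalently Corollary \ref{special3}), both hold for $\mc{F}^{4}_{\pm}$ if and only if $\dd F^{4}=0$ and $\dd\star_{7}F^{4}=f F^{4}$, i.e.\ $\dd\star_{7}\phi=0$ and $\dd\phi=f\star_{7}\phi$; and since $f=\pm 2\neq 0$, applying $\dd$ to the second relation yields the first for free. Hence $(\mathscr{C})\wedge(\mathscr{M})$ reduces to the single equation $\dd\phi=f\star_{7}\phi$, which --- $\phi$ being generic and $f\neq 0$ --- says precisely that $(M^{7},g,\phi)$ is a weak $\G_2$-manifold, with torsion constant $f=\pm2$ as prescribed by $\mc{F}^{4}_{\pm}$. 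Next, $(\mathscr{E})$ splits along $T\mc{M}=T\wi{M}\oplus TM$ into three blocks. The mixed block is trivial: $\Ric^{g_{\mc{M}}}$ vanishes there, and so does the right-hand side, because $X\lrcorner\mc{F}^{4}_{\pm}=f\,(X\lrcorner\vol_{\wi{M}})$ is supported on $\wi{M}$ while $Y\lrcorner\mc{F}^{4}_{\pm}=Y\lrcorner\star_{7}\phi$ is supported on $M$, whence $\langle X\lrcorner\mc{F},Y\lrcorner\mc{F}\rangle_{\mc{M}}=0=g_{\mc{M}}(X,Y)$. On the $\wi{M}$-block, Proposition \ref{true1?} shows that $(\mathscr{E})$ holds precisely when $(\wi{M},\wi{g})$ is Einstein with $\Lambda=-\tfrac16(2f^{2}+\|\phi\|^{2})=-\tfrac16(8+7)=-\tfrac{15}{6}$. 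On the $M$-block, formula (\ref{Rricg}) together with the $\G_2$-identities $\|\phi\|^{2}=7$ and $\langle X\lrcorner\phi,Y\lrcorner\phi\rangle=3g(X,Y)$ turns $(\mathscr{E})$ into (\ref{hypothetic}), namely $\Ric^{g}=\tfrac16(f^{2}+5)g=\tfrac32 g$.

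The only genuinely non-formal step --- the one the sentence just before the theorem alludes to --- is reconciling the $M$-block with the rest. Once $(\mathscr{C}),(\mathscr{M})$ hold, $(M,g,\phi)$ is weak $\G_2$ with torsion constant $\pm2$, and then the Einstein property (\ref{weak2}) of compact weak $\G_2$-manifolds forces $\Ric^{g}=\tfrac38 f^{2}g=\tfrac32 g$, which is exactly (\ref{hypothetic}); so the $M$-block imposes nothing new, and conversely the equality $\tfrac38 f^{2}=\tfrac16(f^{2}+5)$, i.e.\ $9f^{2}=4f^{2}+20$, has the unique solution $f^{2}=4$, which is why the value $f=\pm2$ is built into $\mc{F}^{4}_{\pm}$. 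Assembling the blocks: $(\mc{M},g_{\mc{M}},\mc{F}^{4}_{\pm})$ is a $(4,7)$-decomposable background $\iff$ $\dd\phi=f\star_{7}\phi$ and $(\wi{M},\wi{g})$ is Einstein with $\Lambda=-15/6$ $\iff$ $(M^{7},g,\phi)$ is a weak $\G_2$-manifold (normalised so that its torsion constant is $f=\pm2$, as is forced here) and $(\wi{M},\wi{g})$ is Lorentz Einstein with $\Lambda=-15/6$; the two sign choices furnish the asserted pair. I expect the only subtle point to be exactly this normalisation: a weak $\G_2$-structure with $\|\phi\|^{2}=7$ need not have torsion constant $\pm2$ --- it does not on the round $\Ss^{7}$ --- and it is the requirement that the \emph{unrescaled} flux $\mc{F}^{4}_{\pm}$ solve the system, made visible in the clash between (\ref{weak2}) and (\ref{hypothetic}), that singles this value out.
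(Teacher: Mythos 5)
Your proposal is correct and follows essentially the same route as the paper: reduce $(\mathscr{C})$ and $(\mathscr{M})$ via Proposition \ref{nice1} to $\dd\phi=f\star_{7}\phi$, handle the $\wi{M}$-block by Proposition \ref{true1?}, and reconcile the $M$-block of $(\mathscr{E})$ (i.e.\ (\ref{Rricg}) specialised to (\ref{hypothetic}) using $\|\phi\|^{2}=7$ and $\langle X\lrcorner\phi,Y\lrcorner\phi\rangle=3g(X,Y)$) with the weak $\G_2$ Einstein condition (\ref{weak2}), so that $\tfrac38f^{2}=\tfrac16(f^{2}+5)$ forces $f=\pm2$. Your closing remark about the normalisation of the torsion constant is a fair reading of what the paper leaves implicit, but it does not change the argument.
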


 Let us also discuss  the case where the special 3-form $\phi$ is generic and of Type II, i.e.   $f=0$. Then, the closure condition and the Maxwell equation imply that $\phi$ is both closed and co-closed, so it induces  a parallel $\G_2$-structure. Therefore $({M}^{7}, g)$ must be Ricci-flat, and  by \eqref{hypothetic} we obtain
 \begin{prop}
 The 4-form $\mc{F}=F=\star_7  \phi$, where $\phi$ is a parallel $\G_2$-structure on $(M^{7}, g)$, i.e.\ $\phi$ is  a generic special 3-form of Type II, cannot satisfy the supergravity equations for the Lorentzian manifold $\mc{M}^{10, 1}=\wi{M}^{3, 1}\times M^{7}$, endowed with the induced product metric.
 \end{prop}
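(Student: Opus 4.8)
The plan is to combine three facts already established in the excerpt: (i) by Proposition \ref{nice1} (equivalently Corollary \ref{special3}), when $f=0$ the closure condition $(\mathscr{C})$ and the Maxwell equation $(\mathscr{M})$ for $\mc{F}=F^{4}$ force $F^{4}$ to be both closed and co-closed, i.e.\ $\dd\phi=0$ and $\dd\star_{7}\phi=0$ for $\phi:=\star_{7}F^{4}$; (ii) a generic 3-form $\phi$ with $\dd\phi=\dd\star_{7}\phi=0$ is, by definition, a parallel $\G_2$-structure, and by the cited result \cite{Wang2} (recalled after the definition of parallel $\G_2$-manifolds) the induced metric $g$ is then $\nabla^{g}$-parallel in the sense that $\phi$ is parallel, hence $(M^{7},g)$ is Ricci-flat; (iii) the supergravity Einstein equation restricted to $TM^{7}$ was computed in \eqref{Rricg}, and with the normalisation $\|\phi\|^{2}=7$ and the identity $\langle X\lrcorner\phi,Y\lrcorner\phi\rangle=3g(X,Y)$ it reduces to \eqref{hypothetic}, namely $\Ric^{g}(X,Y)=\tfrac16(f^{2}+5)g(X,Y)$.

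First I would observe that setting $f=0$ in \eqref{hypothetic} gives $\Ric^{g}=\tfrac56 g$, so any generic special 3-form of Type II that solved the supergravity Einstein equation would force $(M^{7},g)$ to have positive scalar curvature $7\cdot\tfrac56>0$. On the other hand, by (i) and (ii) above, the very hypotheses $(\mathscr{C})$ and $(\mathscr{M})$ with $f=0$ already force $(M^{7},g)$ to be Ricci-flat, hence of zero scalar curvature. These two conclusions are incompatible (a $7$-manifold cannot simultaneously satisfy $\Ric^{g}=\tfrac56 g$ and $\Ric^{g}=0$, since $g$ is non-degenerate and positive definite), so no such $\phi$ can satisfy all three supergravity equations. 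This is exactly the contradiction that proves the proposition.

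I would write the argument in that order: invoke Proposition \ref{nice1} to get closed and co-closed, hence parallel $\G_2$ and Ricci-flat; then note that \eqref{hypothetic} with $f=0$ demands $\Ric^{g}=\tfrac56 g\neq 0$; conclude the system is inconsistent. One should also remark, for completeness, that one need not even appeal to the precise value $\tfrac56$: it suffices that $\tfrac16(f^{2}+5)>0$ always, so the Einstein equation on the internal factor is incompatible with Ricci-flatness regardless of $f$, which is the cleaner way to phrase it and matches the statement's emphasis on Type II ($f=0$).

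There is no real obstacle here; the only point requiring a little care is making explicit that ``generic special 3-form of Type II'' already packages together $\dd\phi=0$, $\dd\star_{7}\phi=0$, and genericity, so that the implication ``parallel $\G_2$ $\Rightarrow$ Ricci-flat'' of \cite{Wang2} applies verbatim, and then contrasting this with the strictly positive Einstein constant coming from \eqref{hypothetic}. One should be slightly careful not to claim the Lorentzian factor $(\wi M,\wi g)$ is obstructed — it is the internal $7$-manifold where the contradiction lives — but the statement is phrased for the full eleven-dimensional background, so pointing to the failure of $(\mathscr{E})$ on $TM^{7}$ suffices.
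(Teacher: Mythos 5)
Your proposal is correct and follows essentially the same route as the paper: deduce from Proposition \ref{nice1} that $f=0$ forces $\phi$ to be closed and co-closed, hence a parallel $\G_2$-structure so that $(M^{7},g)$ is Ricci-flat by \cite{Wang2}, and then contrast this with \eqref{hypothetic}, which for $f=0$ demands $\Ric^{g}=\tfrac56 g$. The observation that only the strict positivity of $\tfrac16(f^{2}+5)$ is needed is a nice touch but does not change the argument.
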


\section{Classification of 7-dimensional   homogeneous manifolds  of   a  compact  Lie group }\label{homg}
In this section we classify all compact almost effective homogeneous  7-manifolds $M^{7}=G/H$ of a compact connected Lie group $G$  (up  to  a   covering).  We  apply    this  to  the description  of    invariant  generic  (special) 3-forms, and  some invariant  non-generic  special 3-forms that solve  the Maxwell  equation.  In particular,  one can  separate the examination  of Type III invariant special 3-forms  into    the following  two   subclasses:
\begin{itemize}
\item Type III$\al$, i.e. $\phi:=\star_{7}F^{4}$ is an invariant {\it generic special 3-form} and thus  it induces a homogeneous co-callibrated    weak  $\G_2$-structure  on $M^{7}=G/H$.
\item  Type III$\be$, i.e. $\phi:=\star_{7}F^{4}$ is an invariant  {\it non-generic special 3-form}   on $M^{7}=G/H$.
\end{itemize}

\subsection{Classification  of  subalgebras  of   $\mathfrak{so}_7$}
 So, consider a seven-dimensional compact connected homogeneous Riemannian manifold $(M^{7}= G/H, g)$. 
 We  will  always  assume  that  the  action of  $G$  is   almost  effective,  that is  the  kernel of  effectivity $C=\{g \in G :  gx =x, \  \forall x \in  M  \}$
is  finite.   Let $\mathfrak{g} = \mathfrak{h} + \mathfrak{m}$  be     a  reductive  decomposition  of $\fr{g}$, such   that $\mathfrak{m}$ is identified  with  the tangent  space $T_oM^{7}$ of $M$, where  $o:= eH$.   The   isotropy   representation   $\chi : H \to \SO(\mathfrak{m})\cong\SO_7$  is   given  by $\chi(h)X   = \Ad_h X$, for any $h \in  H$ and $X \in  \mathfrak{m}$.  Almost  effectivity means   that  
the differential $\chi_* : \mathfrak{h} \to \mathfrak{so}(\mathfrak{m})$  of the isotropy  representation
    is   exact, i.e. $\ker(\chi_{*})=\{0\}$ (cf. \cite{Bes}).    Hence,  $\mathfrak{h}$   is isomorphic  to  the isotropy    subalgebra   $\chi_*(\mathfrak{h})\subset\fr{so}(\fr{m})=\mathfrak{so}_7$.

The classification  of almost effective homogeneous 7-manifolds of  a  compact  Lie group $G$  reduces  to  the description   of   all compact Lie  algebras   $\mathfrak{g}$   with  a  reductive  decomposition
 $\mathfrak{g} =\mathfrak{h} + \mathfrak{m}$, $\fr{m}= T_{o}M^{7}$,   whose isotropy representation  $\chi_* $   is exact and such  that $\mathfrak{h}=\chi_*(\mathfrak{h})$   generates   a  compact   subgroup $H$ of   a  compact Lie  group  $G$  with  the  Lie  algebra  $\mathfrak{g}$. This procedure   splits  into  two  simple  steps:
 \begin{itemize}
 \item Description  of  all   subalgebras $\mathfrak{h}$   of  the orthogonal Lie  algebra $\mathfrak{so}_7$.
\item  Description  of   all compact Lie   algebras $\mathfrak{g}$  which   contain $\mathfrak{h}$ as a  codimension 7  Lie subalgebra.
\end{itemize}
Since $\mathfrak{so}_7 = \mathfrak{b}_3$ is  a  rank  3  simple  Lie  algebra,  any   subalgebra  $\mathfrak{h}\subseteq\fr{so}_{7}$  is  a  compact Lie   algebra  of rank $r:=\rnk\fr{h}\leq 3$.  
 The  list   of  simple Lie algebras of    rank $\leq 3$  is   given below  (here the lower indices denote the rank, the upper indices denote the dimension): $\fr{a}_{1}^{3}=\fr{b}_{1}^{3}=\fr{c}_{1}^{3}, \ \fr{a}_{2}^{8}, \ \fr{a}_{3}^{15}=\fr{d}_{3}^{15},  \ \fr{b}_{2}^{10}=\fr{c}_{2}^{10}, \ \fr{g}_{2}^{14},  \  \fr{b}_{3}^{21}, \ \fr{c}_{3}^{21}$. Using it,  we   write  down    the    list  of  proper  semisimple  subalgebras   of  $\mathfrak{so}_7$: $  \mathfrak{so}_3, 2\mathfrak{so}_3,\, 3 \mathfrak{so}_3 = \mathfrak{so}_4 + \mathfrak{so}_3, \, \mathfrak{so}_5,\, \fr{su}_{4}=\mathfrak{so}_6,\, \mathfrak{su}_3$.
Calculating   the centralizer  of   these   subalgebras,  we  get the following    non-semisimple  proper   subalgebras  of  $\mathfrak{so}_7$:  $\mathfrak{u}_1, 2\mathfrak{u}_1, 3\mathfrak{u}_1, \mathfrak{so}_3 + \mathfrak{u}_1, \mathfrak{so}_3 +2\mathfrak{u}_1, \mathfrak{so}_5 + \mathfrak{u}_1, \mathfrak{u}_3$. Now, the    several non-conjugate   subalgebras  of  type $\mathfrak{so}_3$ can be described as follows. 
Let us denote by $V^k $ the irreducible  submodule of  real dimension  $k$  and  by  $\ell\mathbb{R}$  the  trivial $\ell$-dimensional  module.
Let $V^{3}:= \bb{R}^{3}$ be  the standard representation of $\fr{so}_{3}$ and  $V^4:= \mathbb{C}^2$ the standard representation of $\fr{su}_{2}$. 
  Recall  that there are two injective homomorphisms  $\fr{so}_{3}\to\fr{so}_{5}$ of $\fr{so}_{3}$ into $\fr{so}_{5}$, the standard one $A\mapsto{\rm diag}(A, 0, 0)$ and the embedding which corresponds to the unique 5-dimensional representation $V^{5}:=\bb{R}^{5}\cong \Sym^{2}_{0}(\bb{R}^{3})$.
 Similarly, we shall write $V^{7}:=\bb{R}^{7}\cong  \Sym^{3}_{0}(\bb{R}^{3})$ for the unique 7-dimensional irreducible representation of $\fr{so}_{3}$.

  Any   $\mathfrak{so}_3$ subalgebra of  $\mathfrak{so}_7$ is   given  by a  7-dimensional representation $\rho : \fr{so}_{3} \to \fr{so}_{7}\subset\fr{gl}(\bb{R}^{7})$ of
$\mathfrak{so}_3$, which   must be  a  direct  sum  of  the  irreducible  representations  $\mathbb{R}, V^3, V^4, V^5 , V^7$.  As before, we use upper indices to indicate   dimension  of irreducible  representations of   dimension  $>1$. Then,  up to conjugation in $\SO_7$,  we  get  the   following  description  of   subalgebras  of  $\mathfrak{so}_7$  isomorphic  to  $\mathfrak{so}_3$.
\begin{lemma} \label{dyn1} A subalgebra of  $\fr{so}_{3}$ type inside $\fr{so}_{7}$ coincides with one of the following:
\[
\begin{tabular}{ll}
$\al_1)$ \ $\fr{su}_{2}=\fr{so}_{3}^{4}$, such that $\bb{R}^{7}=V^{4}+3\bb{R}$, &  $\al_4)$ \ $\fr{so}_{3}^{(3, 3)}$, such that $\bb{R}^{7}=V^{3}+V^{3}+\bb{R}$, \\
$\al_2)$ \ $\fr{su}_{2}^{c}=\fr{so}_{3}^{(4, 3)}$, such that $\bb{R}^{7}=V^4 +V^{3}$, &  $\al_5)$ \ $\fr{so}_{3}^{5}$, such that $\bb{R}^{7}=V^5 + 2\mathbb{R}$,\\
 $\al_3)$ \ $\fr{so}_{3}^{3}$, such that $V^{3}+4\bb{R}$, &  $\al_6)$ \ $\fr{so}_{3}^{7}$, such that $\bb{R}^{7}=V^{7}$.
\end{tabular}
\]
\end{lemma}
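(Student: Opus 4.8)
The plan is to translate the statement into the representation theory of $\fr{so}_3\cong\fr{su}_2$, classify the relevant representations, and then read off the subalgebras.

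\emph{Step 1: reduction to a representation-theoretic problem.} First I would observe that a subalgebra $\fr{h}\subset\fr{so}_7=\fr{so}(\bb{R}^7)$ with $\fr{h}\cong\fr{so}_3$ is the same datum as a nonzero (hence faithful, since $\fr{so}_3$ is simple) orthogonal representation $\rho\colon\fr{so}_3\to\fr{so}_7$, and conversely that every real representation of $\fr{so}_3$ integrates to the compact group $\SO_3$ and therefore preserves some inner product; after an orthonormalising change of basis it maps into $\fr{so}_7$ for the standard metric. I would then argue that, up to conjugacy in $\Oo_7$, the subalgebra $\fr{h}$ is determined by the isomorphism class of $\rho$ alone: an intertwiner of orthogonal representations may be taken orthogonal, and since every automorphism of $\fr{so}_3$ is inner there is no residual ambiguity. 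Finally I would note that passing from $\Oo_7$- to $\SO_7$-conjugacy costs nothing here, because each subalgebra in the list is centralised by some orthogonal transformation of determinant $-1$ (e.g.\ $-\Id$ when $\bb{R}^7$ is $\fr{so}_3$-irreducible, a reflection in a trivial summand otherwise). Thus it remains to enumerate, up to isomorphism, the faithful $7$-dimensional real representations of $\fr{so}_3$.

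\emph{Step 2: the admissible irreducibles.} I would recall that the complex irreducible $\fr{su}_2$-modules $W_n$ satisfy $\dim_{\bb{C}}W_n=n+1$ and are of real type for $n$ even, of quaternionic type for $n$ odd. Hence the real irreducible $\fr{so}_3$-modules of real dimension at most $7$ are exactly $\bb{R}$ ($n=0$), $V^3\cong\bb{R}^3$ ($n=2$, the standard module), $V^5\cong\Sym^2_0(\bb{R}^3)$ ($n=4$) and $V^7\cong\Sym^3_0(\bb{R}^3)$ ($n=6$), all of real type, together with the real module $V^4\cong\bb{C}^2$ underlying $W_1$, which is of quaternionic type and has real dimension $4$. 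The next quaternionic module $W_3$ already has real dimension $8>7$, and there is no real irreducible of dimension $2$ or $6$. This is the step I would treat most carefully: an error in the type or dimension of a low weight would corrupt the whole list.

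\emph{Step 3: enumeration.} A faithful $7$-dimensional representation is a direct sum of copies of $\bb{R},V^3,V^4,V^5,V^7$ with at least one nontrivial summand, i.e.\ a partition of $7$ into parts from $\{1,3,4,5,7\}$ in which some part exceeds $1$. Listing these partitions yields exactly $V^7$, $V^5+2\bb{R}$, $V^4+V^3$, $V^4+3\bb{R}$, $V^3+V^3+\bb{R}$ and $V^3+4\bb{R}$, which are the modules appearing in cases $\al_6,\al_5,\al_2,\al_1,\al_4,\al_3$ of the statement, respectively; these six subalgebras are pairwise non-conjugate since the isomorphism class of $\bb{R}^7$ as an $\fr{so}_3$-module is a conjugacy invariant. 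The main obstacle is not any individual step but making sure the list of irreducibles in Step 2 is complete and correctly typed; granting that, Steps 1 and 3 are routine bookkeeping.
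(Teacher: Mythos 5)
Your proposal is correct and follows essentially the same route as the paper, which likewise reduces the classification to decomposing a faithful $7$-dimensional representation of $\fr{so}_3$ into the irreducibles $\bb{R}, V^3, V^4, V^5, V^7$ and enumerating the resulting partitions of $7$. You merely supply details the paper leaves implicit (the real/quaternionic type analysis ruling out irreducibles of dimensions $2$ and $6$, the equivalence of conjugacy with module isomorphism, and the harmless passage from $\Oo_7$- to $\SO_7$-conjugacy), all of which are correct.
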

 Since $\fr{so}_{3}^{4}=\fr{su}_{2}=\fr{sp}_{1}\subset\fr{so}_{5}=\fr{sp}_{2}$, the splitting of $\bb{R}^{7}$ in case $\al_1)$
 coincides with the isotropy representation of the 7-sphere $\Ss^{7}=\Sp_{2}/\Sp_{1}$ (see \cite{Zi, Le}).  On the other hand, the isotropy representation of the Stiefel manifold $\bb{V}_{5, 2}=\SO_{5}/\SO_{3}^{\rm st}$, where $\SO_3$ is embedded in $\SO_5$ diagonally, decomposes as $\bb{R}^{7}=V^{3}+V^{3}+\bb{R}$ and $V^{7}$ coincides with the isotropy representation of the    7-dimensional Berger sphere $B^{7}=\SO_{5}/\SO_3^{\rm ir}$ (see \cite{Br1}). Notice that $V^{5}$ coincides with the isotropy representation of the symmetric space $\SU_3/\SO_3$.

We treat now  subalgebras of rank 2. Up to conjugation in $\SO_7$ there are two  subalgebras of type $\fr{so}_{4}$ inside $\fr{so}_{7}$. The first corresponds to the standard embedding $A\to{\rm diag}(A, 0, 0, 0)$  and we write $\fr{so}_{4}=\fr{su}_{2}+\fr{su}_{2}'$, with decomposition $\bb{R}^{7}=V^{4}+3\bb{R}$. Notice that $\fr{su}_{2}$ and $\fr{su}_{2}'$ are conjugate in $\SO_7$. The second subalgebra of this type is denoted by $\fr{so}_{4}^{(4, 3)}=\fr{su}_{2}+\fr{su}_{2}^{c}$ with $\bb{R}^{7}=V^{4}+V^{3}$.     We proceed with  non-conjugate   subalgebras of type $\mathfrak{so}_3 + \mathfrak{u}_1$ inside $\fr{so}_{7}$.
\begin{lemma}\label{dyn2}
A subalgebra of $\fr{so}_{3}+\fr{u}_1$ type inside $\fr{so}_{7}$ coincides with one of the following:
\[
\begin{tabular}{ll}
$\be_1)$ \ $\fr{so}_{3}^{4}+\fr{u}_{1}^{2}=\fr{su}_2+\fr{u}_{1}^{2}$ with $\bb{R}^{7}=V^{4}+V^{2}+\bb{R}$, & $\be_5)$ \ $\fr{so}_{3}^{3}+\fr{u}_{1}^{2}$ with $\bb{R}^{7}=V^{3}+V^{2}+2\bb{R}$,\\
$\be_2)$ \ $\fr{so}_{3}^{4}+\fr{u}_1^{2, 2}=\fr{su}_2+\fr{u}_{1}^{2, 2}=:\fr{u}_2$ with $\bb{R}^{7}=V^{4}+3\bb{R}$, & $\be_6)$ \ $\fr{so}_{3}^{3}+\fr{u}_1^{2, 2}$ with $\bb{R}^{7}=V^{3}+V^{2}+V^{2}$, \\
$\be_3)$ \ $\fr{so}_{3}^{4}+\fr{u}_{1}^{2, 2, 2}=\fr{su}_{2}+\fr{u}_{1}^{2, 2, 2}$ with $\bb{R}^{7}=V^{4}+V^{2}+\bb{R}$, & $\be_7)$ \ $\fr{so}_{3}^{(3, 3)}+\fr{u}_1^{2, 2, 2}$ with $\bb{R}^{7}=V^{3}\otimes V^{2}+\bb{R}$, \\
$\be_4)$ \ $\fr{so}_{3}^{(4, 3)}+\fr{u}_{1}^{2, 2}=\fr{su}_{2}^{c}+\fr{u}_1^{2, 2}=:\fr{u}_{2}^{c}$ with $\bb{R}^{7}=V^{4}+V^{3}$,  &  $\be_8)$ \ $\fr{so}_{3}^{5}+\fr{u}_1^{2}$ with $\bb{R}^{7}=V^{5}+V^{2}$.
\end{tabular}
\]
Here $V^{2}:=\bb{C}^{1}$  states for the standard representation of $\fr{u}_{1}$. 
Notice that in the third case $\be_3)$ the Lie algebra $\fr{u}_1$ acts both on  $V^{4}$ and $V^{2}$, in the second case $\be_2)$  it acts on $V^{4}$ and in the first case $\be_1)$ it acts only on $V^{2}$.
\end{lemma}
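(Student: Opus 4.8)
\medskip
\noindent The plan is to reduce Lemma~\ref{dyn2} to the enumeration, up to conjugacy in $\SO_7$, of the nonzero one-dimensional subalgebras of the centralizers in $\fr{so}_7$ of the $\fr{so}_3$-type subalgebras classified in Lemma~\ref{dyn1}. Indeed, a subalgebra $\fr{k}\subset\fr{so}_7$ isomorphic to $\fr{so}_3\oplus\fr{u}_1$ is the same as a faithful $7$-dimensional orthogonal representation of $\fr{so}_3\oplus\fr{u}_1$; faithfulness forces each ideal to act nontrivially, so the $\fr{so}_3$-summand of $\fr{k}$ is a genuine $\fr{so}_3$-subalgebra of $\fr{so}_7$ and, after conjugation, coincides with one of $\al_1),\ldots,\al_6)$, while the $\fr{u}_1$-summand is a nonzero line in the centralizer $\fr{z}(\al_i)\subset\fr{so}_7$. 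Conversely, any such pair spans a copy of $\fr{so}_3\oplus\fr{u}_1$, the sum being automatically direct since $\fr{z}(\fr{so}_3)\cap\fr{so}_3=0$. So first I would compute $\fr{z}(\al_i)$ for $i=1,\ldots,6$, and then list the lines in each $\fr{z}(\al_i)$ up to $N_{\SO_7}(\al_i)$-conjugacy, recording in each case the induced $\fr{so}_3\oplus\fr{u}_1$-decomposition of $\bb{R}^7$.

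For the centralizers I would combine the $\fr{so}_3$-isotypic decomposition of $\bb{R}^7$ with Schur's lemma over $\bb{R}$: a copy of a real-type irreducible ($V^3$, $V^5$ or $V^7$) contributes $\Hom_{\fr{so}_3}=\bb{R}$ with vanishing skew part, a copy of the quaternionic-type irreducible $V^4$ contributes $\Hom_{\fr{so}_3}=\bb{H}$ with skew part $\mathrm{Im}\,\bb{H}=\fr{sp}_1\cong\fr{so}_3$, each trivial summand contributes a full orthogonal block, and two isomorphic copies of an irreducible add an extra $\fr{so}_2$ mixing them. Carrying this out gives $\fr{z}(\al_1)=\fr{sp}_1\oplus\fr{so}_3$ (acting on $V^4=\bb{H}$ by right multiplication, resp.\ rotating the $3\bb{R}$), $\fr{z}(\al_2)=\fr{sp}_1$ (right multiplication on $V^4=\bb{H}$), $\fr{z}(\al_3)=\fr{so}_4$ (rotating the $4\bb{R}$), $\fr{z}(\al_4)=\fr{so}_2$ (rotating the two copies of $V^3$ into one another), $\fr{z}(\al_5)=\fr{so}_2$ (rotating the $2\bb{R}$), and $\fr{z}(\al_6)=0$. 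In particular $\al_6)$ carries no commuting $\fr{u}_1$ and is discarded, so all candidates arise from $\al_1),\ldots,\al_5)$.

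Next I would list the lines in each nonzero $\fr{z}(\al_i)$. When $\fr{z}(\al_i)$ is one-dimensional ($i=4,5$) the $\fr{u}_1$ is forced: for $\al_4)$ the extra $\fr{so}_2$ turns $V^3\oplus V^3$ into $V^3\otimes V^2$, giving $\be_7)$ with $\bb{R}^7=V^3\otimes V^2+\bb{R}$, and for $\al_5)$ one gets $\be_8)$ with $\bb{R}^7=V^5+V^2$. For $\al_2)$, a line in $\fr{sp}_1$ is conjugate to its maximal torus, which acts nontrivially only on $V^4$; this is $\be_4)$, $\bb{R}^7=V^4+V^3$, and $\fr{su}_2^{c}+\fr{u}_1$ exponentiates to $\U_2$. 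For $\al_1)$, a line in $\fr{sp}_1\oplus\fr{so}_3$ is conjugate into the maximal torus $\fr{u}_1\oplus\fr{u}_1$, and up to the Weyl group there are exactly three possibilities, according to which factor acts nontrivially on $\bb{R}^7$: the $\fr{so}_3$-factor only ($\be_1)$, with $\fr{u}_1$ acting only on a $V^2\subset 3\bb{R}$), the $\fr{sp}_1$-factor only ($\be_2)$, with $\fr{u}_1$ acting on $V^4$ and $\fr{su}_2+\fr{u}_1$ exponentiating to $\U_2$), or both ($\be_3)$, with $\fr{u}_1$ acting on $V^4$ and on $V^2$). For $\al_3)$, a line in $\fr{so}_4$ is conjugate to a two-plane rotation with a pair of angular speeds $(a,b)$; up to the Weyl group and overall rescaling, either $b=0$, so $4\bb{R}=V^2+2\bb{R}$ and $\bb{R}^7=V^3+V^2+2\bb{R}$ ($\be_5)$), or $a,b\neq 0$, so $4\bb{R}=V^2+V^2$ and $\bb{R}^7=V^3+V^2+V^2$ ($\be_6)$). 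In the mixed cases $\be_3)$ and $\be_6)$ the ratio of the two speeds is a genuine modulus but leaves the isomorphism type of the $\fr{so}_3\oplus\fr{u}_1$-module $\bb{R}^7$ unchanged, which is why each is recorded once; and since the eight pairs so obtained already have pairwise distinct $\fr{so}_3$-module structures on $\bb{R}^7$, no further identifications occur and the list is complete.

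The step I expect to be the main obstacle is the centralizer computation: one must notice that $\fr{z}(\al_1)=\fr{sp}_1\oplus\fr{so}_3$ and $\fr{z}(\al_2)=\fr{sp}_1$ are non-abelian --- forced by $V^4$ being of quaternionic rather than real type --- and that $\fr{z}(\al_4)=\fr{so}_2$ comes from the off-diagonal rotation exchanging the two isomorphic copies of $V^3$, which it would be easy to miss. Once the centralizers are correct, the remaining care is not to overlook, in the rank-$2$ cases $\fr{z}(\al_1)$ and $\fr{z}(\al_3)$, the diagonal/mixed one-dimensional subalgebras, which are precisely the ones giving rise to the families $\be_3)$ and $\be_6)$.
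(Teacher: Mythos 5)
Your proof is correct and follows essentially the same route as the paper: compute the centralizers in $\fr{so}_7$ of the $\fr{so}_3$-type subalgebras of Lemma \ref{dyn1} and then enumerate the admissible $\fr{u}_1$'s inside them, with your write-up simply carrying out explicitly the final enumeration that the paper compresses into ``considering the several possible actions of $\fr{u}_1$''. Your centralizer $\fr{z}(\al_1)=\fr{sp}_1\oplus\fr{so}_3$ is the correct one (the paper's displayed $C_{\fr{so}_7}(\fr{su}_2)=\fr{su}_2'+\fr{so}_4$ is evidently a typo for $\fr{su}_2'+\fr{so}_3$, since $\fr{so}_3^4$ acts trivially only on a $3$-dimensional subspace of $\bb{R}^7$).
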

 \begin{proof}
We use  Lemma \ref{dyn1}  and compute   the   centralizers  of all  subalgebras  inside $\fr{so}_{7}$ of type $\fr{so}_{3}$. We see that
\[
\begin{tabular}{l l l}
$C_{\mathfrak{so}_7}(\mathfrak{so}^3_3 )=\mathfrak{so}_4$, &  $C_{\mathfrak{so}_7}(\mathfrak{su}_2) = \mathfrak{su}'_2 + \mathfrak{so}_4$, &   $C_{\mathfrak{so}_7}(\mathfrak{su}^c_2 )=  \mathfrak{su}_2'$, \\
   $ C_{\mathfrak{so}_7} (\mathfrak{so}_3^{(3,3)}) = \mathfrak{u}_1^{2, 2, 2}$, &   $C_{\mathfrak{so}_7}  (\mathfrak{so}_3^5) = \mathfrak{u}_1^{2}$, & $C_{\mathfrak{so}_7}   (\mathfrak{so}_3^7) =\{0\}$.
   \end{tabular}
\]
Hence we need to exclude  $\fr{so}_{3}^{7}+\fr{u}_{1}$ and our claim follows by considering the several possible actions of $\fr{u}_1$ (the case arising by the decomposition $\bb{R}^{7}=V^{5}+2\bb{R}$ cannot exist  due to the $\fr{u}_1$-action). 
\end{proof}


Concerning   subalgebras of rank 3, we  remark that  $\fr{so}_{4}+\fr{so}_{2}=\fr{su}_{2}+\fr{su}_{2}'+\fr{u}_1$ belongs to $\fr{so}_{7}$, but this is not true for  the direct sum $\fr{so}_{4}^{(4, 3)}+\fr{so}_{2}=\fr{su}_{2}+\fr{su}_{2}^{c}+\fr{u}_{1}$. Indeed, in the first case one computes $C_{\fr{so}_{7}}(\fr{so}_{4})=\fr{su}_{2}$, while the centralizer of $\fr{so}_{4}^{(4, 3)}$ is trivial, i.e. $C_{\fr{so}_{7}}(\fr{so}_{4}^{(4, 3)})=\{0\}$.  Let us  summarise   all the  results   (including Lemmas \ref{dyn1}, \ref{dyn2}) with  some   more information    in   Table 1. 

 \begin{center}
 {\bf{Table 1.}}  The  Lie subalgebras of $\fr{so}_{7}=\fr{b}_{3}$ 
  \[
{}\begin{tabular} {l | l | l |  l  }
$r=\rnk \fr{h}$ & $\fr{h}=\fr{h}^{d}$ & $\fr{g}^{d+7}$ &   $\fr{h}$-decomposition of $\bb{R}^{7}$ \\
\thickline
$r=0$ & $\fr{h}=\text{trivial}$ & $\fr{g}^{7}$   &   $                $  \\
\hline
$r=1$ & $\fr{u}_{1}$ & $\fr{g}^{8}$  & $\bb{R}^{7}=V^{2}+5\bb{R}$  \\
          & $\fr{u}_{1}$ & $\fr{g}^{8}$ & $\bb{R}^{7}=2V^{2}+3\bb{R}$ \\
          & $\fr{u}_{1}$ & $\fr{g}^{8}$ & $\bb{R}^{7}=3V^{2}+\bb{R}$ \\
           & $\fr{su}_{2}=\fr{so}_{3}^{4}$  &  $\fr{g}^{10}$  & $\bb{R}^{7}=V^{4}+3\bb{R}$ \\
           & $\fr{su}_{2}^{c}$  & $\fr{g}^{10}$ & $\bb{R}^{7}=V^{4}+V^{3}$ \\
           & $\fr{so}_{3}^{3}$ &  $\fr{g}^{10}$ &  $\bb{R}^{7}=V^{3}+4\bb{R}$  \\
           & $\fr{so}_{3}^{5}$ & $\fr{g}^{10}$ & $\bb{R}^{7}=V^{5}+2\bb{R}$ \\
           & $\fr{so}_{3}^{(3, 3)}$ & $\fr{g}^{10}$ &  $\bb{R}^{7}=V^{3}+V^{3}+\bb{R}$ \\
           & $\fr{so}_{3}^{7}$, &  $\fr{g}^{10}$  & $\bb{R}^{7}=V^{7}$\\
           \hline
$r=2$ & $2\fr{u}_{1}=\diag(\fr{u}_{1}+\fr{u}_{1})+\fr{u}'_{1}$ & $\fr{g}^{9}$ & $\bb{R}^{7}=V^{2}\otimes \mathbb{R}^2 + (V')^2+\bb{R}$   \\
                          & $\fr{so}_{3}^{4}+\fr{u}_{1}^{2}=\fr{su}_{2}+\fr{u}_{1}^{2}$ & $\fr{g}^{11}$ & $\bb{R}^{7}=V^{4}+V^{2}+\bb{R}$ \\
           &  $\fr{u}_{2}:=\fr{so}_{3}^{4}+\fr{u}_{1}^{2, 2}=\fr{su}_{2}+\fr{u}_{1}^{2, 2}$ & $\fr{g}^{11}$ & $\bb{R}^{7}=V^{4}+3\bb{R}$\\
           & $\fr{so}_{3}^{4}+\fr{u}_{1}^{2, 2, 2}$ & $\fr{g}^{11}$ & $\bb{R}^{7}=V^{4}+V^{2}+\bb{R}$ \\
           &  $\fr{u}_{2}^{c}:=\fr{so}_{3}^{(4, 3)}+\fr{u}_{1}^{2, 2}=\fr{su}_{2}^{c}+\fr{u}_{1}^{2, 2}$ & $\fr{g}^{11}$  &  $\bb{R}^{7}=V^{4}+V^{3}$ \\
             & $\fr{so}_{3}^{3}+\fr{u}_{1}^{2}$ & $\fr{g}^{11}$ & $\bb{R}^{7}=V^{3}+V^{2}+2\bb{R}$\\
              & $\fr{so}_{3}^{3}+\fr{u}_{1}^{2, 2} $ & $\fr{g}^{11}$ & $\bb{R}^{7}=V^{3}+V^{2}+V^{2}$\\
           & $\fr{so}_{3}^{(3, 3)}+\fr{u}_{1}^{2, 2, 2}$ & $\fr{g}^{11}$ & $\bb{R}^{7}=V^{3}\otimes V^2+\bb{R}$ \\
           & $\fr{so}_{3}^{5}+\fr{u}_{1}^{2}$ & $\fr{g}^{11}$ & $\bb{R}^{7}=V^{5}+V^{2}$ \\
            & $ \fr{so}_{4}=  \fr{su}_{2}+\fr{su}'_{2}$  & $\fr{g}^{13}$ & $\bb{R}^{7}=V^{4}+3\bb{R}$  \\
           & $\fr{so}_{4}^{(4, 3)}=\fr{su}_{2}+\fr{su}_{2}^{c}$ & $\fr{g}^{13}$ &    $\bb{R}^{7}=V^{4}+V^{3}$ \\
           & $\fr{su}_{3}$ & $\fr{g}^{15}$ & $\bb{R}^{7}=V^{6}+\bb{R}$  \\
           & $\fr{so}_{5}=\fr{sp}_{2}$ & $\fr{g}^{17}$ & $\bb{R}^{7}=V^{5}+2\bb{R}$\\
           & $\fr{g}_{2}$ &     $\fr{g}^{21}$ & $\bb{R}^{7}=V^{7}$\\
           \hline
$r=3$ & $3\fr{u}_{1}$ & $\fr{g}^{10}$  & $\bb{R}^{7}=3V^{2}+\bb{R}$ \\
          & $2\fr{u}_{1}+\fr{su}_{2}=\fr{u}_{2}+\fr{u}_{1}$ & $\fr{g}^{12}$ & $\bb{R}^{7}=V^{4}+V^{2}+\bb{R}$  \\
          & $\fr{so}_{4}+\fr{so}_{2} = \fr{su}_{2}+\fr{su}'_{2} + \fr{u}_{1}$ & $\fr{g}^{14}$ & $\bb{R}^{7}=V^{4}+V^{2}+\bb{R}$ \\
                   & $\fr{u}_{3}$ & $\fr{g}^{16}$ & $\bb{R}^{7}=V^{6}+\bb{R}$ \\
         & $\fr{su}_{2}+\fr{su}'_{2}+\fr{so}_{3}=\fr{so}_{4}+\fr{so}_{3}$ & $\fr{g}^{16}$ & $\bb{R}^{7}=V^{4}+V^{3}$ \\
          & $\fr{so}_{5}  + \fr{u}_{1} =\fr{sp}_{2} =\fr{so}_{2}$ & $\fr{g}^{18}$  & $\bb{R}^{7}=V^{5}+V^{2}$\\
          &  $\fr{so}_{6}$ & $\fr{g}^{22}$ & $\bb{R}^{7}=V^{6}+\bb{R}$ \\
          &  $\fr{so}_{7}$ &  $\fr{g}^{28}=\fr{d}_{4}$ & $\bb{R}^{7}=V^7$\\
          \thickline
\end{tabular}
\]
\end{center}


\subsection{Classification  of almost-effective compact homogeneous 7-manifolds}
  Now,   the  classification   of     almost  effective   homogeneous  7-manifolds $M^7 = G/H$  of   a compact  Lie group  $G$,  reduces  to  an  enumeration  of   all compact Lie   algebras $\fr{g}=\mathfrak{g}^{d + 7}$ of   dimension $d +7$, which contain    a subalgebra  $\fr{h}=\mathfrak{h}^d$  from Table 1 and have as  reductive  decomposition  $\mathfrak{g}^{d+7}   = \mathfrak{h}^d + \mathfrak{m}$, one of the indicated isotropy representations.   We present all   such homogeneous  7-manifolds  in  Table  2, but initially   it  is  convenient  to use Lemma \ref{dyn2}  and present a proof for the  almost effective cosets $M^{7}=G^{d+7}/H^{d}$ whose isotropy subalgebra  $\fr{h}^{d}\subset\fr{so}_7$ is of type $\fr{so}_{3}+\fr{u}_1$ (and hence $d=4$).  We mention that in Table 2   we omit the details for most of the embeddings $\fr{h}\subset\fr{so}_{7}$ which do not give rise to some almost effective coset and  use the following notation:  For  a given direct product   $M=G/H \times{\rm T}^{k}$ of a homogeneous space $G/H$ (whose isotropy subgroup is given by  $H = H' \times {\rm T}^{\ell}$) with a torus ${\rm T}^{k}$,  we  shall denote   by
  $M_{\psi}=G/H \wi{\times}{\rm T}^{k}   $   the
{\it  twisted product} $M_{\psi} = G/H^{\psi}$,  defined   by  a homomorphism $\psi : H = H' \times {\rm T}^{\ell}\to{\rm T}^{k}$,
where  $H^{\psi}:= \{ (h, \psi(h)) : h\in H \} \subset H \times {\rm T}^{k}$. It is remarkable that several cosets $M^{7}=G/H$ is of this type. 
\begin{prop}
Let $M^{7}=G^{11}/H^{4}$ be an almost effective homogeneous 7-manifold of an eleven-dimensional compact Lie group $G$, whose stability subalgebra   $\fr{h}\equiv\fr{h}^{4}$ is of type $\fr{so}_{3}+\fr{u}_1$.    Then $M$ is diffeomorphic to one of the cosets appearing in Table 2, case $d=4$.
\end{prop}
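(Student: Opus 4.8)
The plan is to carry out the enumeration scheme described just before the statement: first list the compact Lie algebras $\fr{g}^{11}$ that could possibly contain $\fr{h}^{4}=\fr{so}_{3}+\fr{u}_{1}$ as a codimension-seven subalgebra, and then, for each of them and for each of the eight embeddings $\be_1)$--$\be_8)$ of Lemma~\ref{dyn2}, decide whether the reductive complement $\fr{m}=\fr{g}/\fr{h}$ carries one of the admissible isotropy representations and whether the $G$-action is almost effective.

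\smallskip
\noindent\emph{Step 1 (the ambient algebra).} Since no compact simple Lie algebra has dimension $11$, we may write $\fr{g}^{11}=\fr{g}_1\oplus\cdots\oplus\fr{g}_s\oplus\fr{u}_1^{k}$ with the $\fr{g}_i$ compact simple. As $\fr{g}$ must contain $\fr{so}_{3}$, at least one simple summand occurs, and counting dimensions leaves only the possibilities
\[
\fr{su}_{3}\oplus\fr{su}_{2},\quad 3\fr{su}_{2}\oplus 2\fr{u}_{1},\quad \fr{so}_{5}\oplus\fr{u}_{1},\quad \fr{su}_{3}\oplus 3\fr{u}_{1},\quad 2\fr{su}_{2}\oplus 5\fr{u}_{1},\quad \fr{su}_{2}\oplus 8\fr{u}_{1}.
\]
The last one (as well as the purely abelian algebra) is discarded at once: there $\fr{h}$ necessarily contains the unique $\fr{su}_{2}$-summand, which is an ideal, so the action cannot be almost effective.

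\smallskip
\noindent\emph{Step 2 (placing $\fr{h}$).} For each surviving $\fr{g}$ I would determine, up to conjugacy, the subalgebras isomorphic to $\fr{so}_{3}+\fr{u}_{1}$. The $\fr{so}_{3}$-factor is either a diagonal copy inside one or two of the $\fr{su}_{2}$-factors (producing $\fr{su}_{2}$, $\fr{su}_{2}^{c}$ or $\fr{so}_{3}^{(3,3)}$ as in Lemma~\ref{dyn1}) or one of the nonstandard embeddings $\fr{so}_{3}^{3}\subset\fr{su}_{3}$ and $\fr{so}_{3}^{5}\subset\fr{so}_{5}$; the commuting $\fr{u}_{1}$ then lies in the appropriate centralizer, computed exactly as in the proof of Lemma~\ref{dyn2}, and may be a torus factor, a circle inside a simple factor, or --- most importantly --- a ``diagonal'' circle mixing a central direction with a torus direction of the semisimple part. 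This last possibility is precisely what forces the twisted-product description $M_{\psi}=G/H^{\psi}$ introduced above, and it explains why several entries of Table~2 appear as $G/H\,\wi{\times}\,{\rm T}^{k}$.

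\smallskip
\noindent\emph{Step 3 (matching and identification).} For each pair $(\fr{g},\fr{h})$ I would compute $\fr{m}$ as an $\fr{h}$-module and keep it only when its decomposition coincides with one of $\be_1)$--$\be_8)$ and $\fr{h}$ contains no nonzero ideal of $\fr{g}$. The surviving cosets are then identified explicitly --- the model cases being $\bb{C}P^{2}\times\Ss^{3}=\SU_3/\U_2\times\SU_2$ and $\Ss^{5}\times\Ss^{2}$, together with circle bundles over products of lower-dimensional spheres and their twisted analogues --- and one verifies that these are exactly the entries with $d=4$ in Table~2. Moreover, the rank-$3$ ``near misses'' such as $\fr{so}_{4}^{(4,3)}+\fr{so}_{2}\not\subset\fr{so}_{7}$ and $\fr{so}_{3}^{7}+\fr{u}_{1}\not\subset\fr{so}_{7}$ do not arise here, having already been excluded by the centralizer computations preceding the statement.

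\smallskip
\noindent\emph{Main obstacle.} The real work lies in the bookkeeping of Steps~2--3: one must enumerate every inequivalent circle subalgebra $\fr{u}_{1}$ (in particular all the diagonal ones), check the isotropy weights in each case, and ensure that the resulting list of cosets is complete and yet not redundant, since distinct algebraic data may well produce diffeomorphic --- or merely locally isomorphic --- homogeneous spaces. Once this is organised, everything reduces to a finite, if tedious, sequence of representation-theoretic verifications.
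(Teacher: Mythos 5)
Your strategy is the same as the paper's: enumerate the compact Lie algebras of dimension $11$ that can contain $\fr{so}_{3}+\fr{u}_{1}$ as a codimension-seven subalgebra, then match each embedding from Lemma~\ref{dyn2} against the possible isotropy modules and discard the non--almost-effective pairs. The paper organises exactly this enumeration by whether $\fr{g}^{11}$ is semisimple (forcing $\fr{g}^{11}=\fr{su}_{3}+\fr{su}_{2}$) or has centre of dimension $1$, $2$ or $3$ (yielding $\fr{so}_{5}+\fr{u}_{1}$, $3\fr{su}_{2}+2\fr{u}_{1}$, $\fr{su}_{3}+3\fr{u}_{1}$), and then works through every pair $(\fr{g},\fr{h})$ explicitly.

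The gap is that your Steps 2 and 3 are announced (``I would determine'', ``I would compute'') but not carried out, and for a classification statement this case-by-case enumeration \emph{is} the proof. One must actually verify, for each surviving ambient algebra and each of $\be_1)$--$\be_8)$, which combinations occur and which cosets they produce: e.g.\ that $\fr{so}_{3}^{3}+\fr{u}_{1}^{2,2}$ yields no almost effective coset (its decomposition $V^{3}+V^{2}+V^{2}$ forces $\fr{g}^{11}=\fr{su}_{3}+\fr{su}_{2}$, yet only $\fr{su}_{2}$ and $\fr{so}_{3}^{5}$ have nontrivial centraliser in $\fr{su}_{3}$); that the diagonal $\fr{u}_{1}^{2,2,2}$ in $\fr{su}_{3}+\fr{su}_{2}$ produces the family $N_{a,b}$; that $\fr{su}_{2}^{c}+\fr{u}_{1}^{2,2}$ inside $\fr{sp}_{2}+\fr{u}_{1}$ gives the squashed $\Ss^{7}$; and so on. There is also a concrete loose end in Step 1: $2\fr{su}_{2}\oplus 5\fr{u}_{1}$ survives your dimension count but is never eliminated. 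The clean way to dispose of it (and of $\fr{su}_{2}\oplus 8\fr{u}_{1}$ simultaneously) is the centre bound implicit in the paper's Case B: almost effectivity forces $Z(\fr{g})\cap\fr{h}=0$, so $Z(\fr{g})$ injects into the trivial $\fr{h}$-submodule of $\bb{R}^{7}$, which by Lemma~\ref{dyn2} has dimension at most $3$; hence $\dim Z(\fr{g}^{11})\le 3$. Your Step 3 would eventually catch this, but as written neither this exclusion nor the positive identifications are actually performed.
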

\begin{proof}
It is useful to split the examination of   compact Lie algebras $\fr{g}^{11}$ into two main cases:  

\noindent  {\bf  Case A: $\fr{g}^{11}$ is semisimple.}  Let us assume that $\fr{g}^{11}$ is semisimple, i.e.    $\mathfrak{g}^{11}=[\fr{g}^{11}, \fr{g}^{11}]$.  The  only semisimple  eleven-dimensional Lie  algebra is  the direct sum $\mathfrak{a}_1+\mathfrak{a}_2$, hence we set $\fr{g}^{11}=\mathfrak{so}_3+\mathfrak{su}_3  =  \mathfrak{su}_2+\mathfrak{su}_3$.   The  only subalgebras  of  type $\fr{so}_{3}$ inside  $\mathfrak{su}_3$ are the subalgebras   $\mathfrak{su}_2 = \mathfrak{so}_3^4$ and  $\mathfrak{so}_3^5$,  whose  centralizer  in  $\mathfrak{su}_3$   is $\mathfrak{u}_1$ and $\{0\}$, respectively. Therefore,  the following cases appear:

{\bf 1)} \ If $\mathfrak{su}_2 \subset \mathfrak{su}_3$,    then  $\mathfrak{h} = \mathfrak{so}_3^4 + \mathfrak{u}_1^{2,2}=\mathfrak{u}_2$. This  gives rise to the homogeneous space $M=\mathbb{C}P^2 \times\Ss^{3}= (\SU_3/\U_2)\times\SU_2$ with isotropy representation $\bb{R}^{7}=V^{4}+3\bb{R}$. 

 {\bf 2)}  If $\mathfrak{so}_3 \subset \mathfrak{su}_3$ and $\mathfrak{u}_1 \subset \mathfrak{so}_3 \subset \mathfrak{su}_2 + \mathfrak{su}_3$, then we deduce that there are two desired subalgebras of type $\fr{so}_3+\fr{u}_1$. The first one is given by $\mathfrak{h} = \mathfrak{so}_3^4 + \mathfrak{u}_1^2$ and induces the coset  $M = \Ss^2 \times \Ss^5 = (\SU_2/\U_1) \times (\SU_3/\SU_2)$, whose isotropy representation decomposes as $\bb{R}^{7}=V^{2}+V^{4}+\bb{R}$. 
    The second one coincides with $\mathfrak{h} = \mathfrak{so}_3^{5} + \mathfrak{u}_1^2$ with corresponding coset $M=(\SU_2/\U_1) \times (\SU_3/\SO_3)$. Here, the isotropy representation is given by $\bb{R}^{7}=V^{2}+V^{5}$. 

 {\bf 3)}  If $\mathfrak{so}_3 \subset \mathfrak{su}_3$ but $\mathfrak{u}_1\nsubseteq\mathfrak{so}_3$,  then
   $\mathfrak{h} = \mathfrak{su}_2 + \mathfrak{u}_1^{2,2,2}$    where   $\mathfrak{su}_2=\fr{so}_{3}^{4}$ is  the   standard   subgroup of   $\mathfrak{su}_3$  and   $\mathfrak{u}_1^{2,2,2} = \Delta \mathfrak{u}_1 $ is   the   diagonal  subgroup  of  $ \mathfrak{u}_1 + \mathfrak{u}_1 \subset  \mathfrak{su}_2+ \mathfrak{su}_3$.
  Then we    get  the  homogeneous space $M = (\SU_3\times\SU_2)/(\SU_2\times\U_1)=\big((\SU_3/\SU_2)\times\SU_2\big)/\Delta\U_1$, whose isotropy representation decomposes as follows: $\bb{R}^{7}=V^{4}+V^{2}+\bb{R}$.  Usually, the embedding of $\Delta\fr{u}_{1}$ in  $\fr{u}_1+\fr{u}_1$ is indicated by two parameters $a, b$ and it is classical to denote these manifolds by $N_{a, b}$. 

 {\bf 4)} If $\mathfrak{su}_2\nsubseteq\mathfrak{su}_3$,  then $\fr{h}=\fr{so}_{3}^{(4, 3)}+\fr{u}_{1}^{2, 2}=\fr{su}_{2}^{c}+\fr{u}_{1}^{2, 2}=\fr{u}_{2}^{c}$, where we identify $\fr{su}_{2}^{c}$ with     the  diagonal  subalgebra  $\Delta \mathfrak{su}_2$  of $\mathfrak{su}_2 \oplus  \mathfrak{su_2}' \subset \mathfrak{su}_2 \oplus  \mathfrak{su_3}$,
  and $\mathfrak{u}_1 = \mathfrak{u}_1^{2,2}$ with    the centralizer  of $\mathfrak{su}_2'$  in  $\mathfrak{su}_3$.  This gives rise to the so-called exceptional Allof-Wallach spaces $W_{1, 1}=(\SU_{3}\times\SU_{2})/(\SU_{2}^{c}\times\U_1)$, with  isotropy representation   $\bb{R}^{7}=V^{4}+V^{3}$. Note that here  the Lie group $\SU_{2}^{c}$ can be viewed as the normalizer of $\Delta\SU_2$ inside  $\SU_3\times\SU_2$.

  In order to complete Case A, we need to show that the  subalgebra   $\fr{h}=\fr{so}^{3}_{3}+\fr{u}_{1}^{2, 2}$ does not  induce some almost effective homogeneous 7-manifold. Indeed, since $\bb{R}^{7}=V^{3}+V^{2}+V^{2}$, the eleven-dimensional Lie algebra $\fr{g}^{11}$ must be without center, and thus we get  $\fr{g}^{11}=\fr{su}_{3}+\fr{su}_2$.  However, it must be $\fr{so}_{3}^{3}\subset\fr{su}_{3}$ but only $\fr{su}_2, \fr{so}_{3}^{5}$ have non-trivial centralizer inside $\fr{su}_3$ and our claim follows. 


    \smallskip
 \noindent {\bf  Case B: $\fr{g}^{11}$ is non-semisimple.}  Assume now that $\fr{g}^{11}$ is non-semisimple. Then the dimension of the center $Z(\fr{g}^{11})$ must satisfy $1\leq \dim Z(\fr{g}^{11})\leq 3$. Hence we need to consider three cases:

 {\bf  1)} \  $\dim Z(\fr{g}^{11})=1$. The unique candidate of a Lie algebra of type $\fr{g}^{11}=\fr{s}+\fr{u}_1$ with $\fr{s}$ simple, is the Lie algebra $\fr{g}^{11}=\fr{so}_{5}+\fr{u}_1=\fr{sp}_{2}+\fr{u}_1$. Inside $\fr{so}_{5}$ the  $\mathfrak{so}_3$-subalgebras   $\mathfrak{so}_3^{(3, 3)}$
  and     $\mathfrak{su}_2  \subset \mathfrak{u}_2$ have non trivial centralizer and the same holds for  $\fr{su}^{c}_{2}=\fr{so}^{(4, 3)}_{3}$ inside $\fr{sp}_{2}$. Hence, in this case we find  the following  subalgebras of type $\fr{so}_{3}+\fr{u}_1$ which induce  almost effective homogeneous 7-manifolds:\\
 $\bullet$ $\fr{h}=\fr{so}^{4}_{3}+\fr{u}_1^{2}$, with corresponding coset  $M=(\SO_5/\U_2)\wi{\times}{\rm S}^{1}=\bb{C}P^{3}\wi{\times}\Ss^{1}$ and $\bb{R}^{7}=V^{4}+V^{2}+\bb{R}$. \\  
 $\bullet$ $\fr{h}=\fr{so}_{3}^{(4, 3)}+\fr{u}_{1}^{2, 2}=\fr{u}_{2}^{c}$, which defines the squashed 7-sphere $\Ss^{7}= (\Sp_{2}\times\U_{1})/(\Sp_{1}\times\Delta\U_{1})$. Here, the isotropy representation is such that $\bb{R}^{7}=V^{4}+V^{3}$.  \\
$\bullet$ $\fr{h}=\fr{so}_{3}^{(3, 3)}+\fr{u}_{1}^{2, 2, 2}$, which induces the twisted product ${\rm Gr}_2(\mathbb{R}^5) \wi{\times}\Ss^{1}= (\SO_5/ \SO_3 \times \SO_2)\wi{\times}\Ss^{1}$, where   ${\rm Gr}_2(\mathbb{R}^5)$ is a Grassmann manifold. In this case the isotropy representation decomposes by $\bb{R}^{7}=(V^{3}\otimes V^{2})+\bb{R}$, where we identify the irreducible representation $V^{3}\otimes V^{2}$ with the isotropy representation of the six-dimensional symmetric space ${\rm Gr}_2(\mathbb{R}^5)$.

 {\bf 2)} \  $\dim Z(\fr{g}^{11})=2$.  Then  $\fr{g}^{11}=3\fr{so}_3+2\fr{u}_1=3\fr{su}_2+2\fr{u}_1$ and $\fr{h}=\fr{so}_{3}^{3}+\fr{u}_{1}^{2}$. In this case we obtain  the space $M=(\SO_4/\SO_3)\times(\SU_2/\U_1)\wi{\times}{\rm T}^{2}=\Ss^{3}\times\Ss^{2}\wi{\times}{\rm T}^{2}$, with $\bb{R}^{7}=V^{3}+V^{2}+2\bb{R}$.

 {\bf 3)}  $\dim Z(\fr{g}^{11})=3$.  Then $\fr{g}^{11}=\fr{su}_{3}+3\fr{u}_1$ and the isotropy subalgebra $\fr{h}$ must be $\fr{so}_{3}^{4}+\fr{u}_{1}^{2, 2}=\fr{u}_{2}$. Thus here we get the coset $M=\bb{C}P^{2}\wi{\times}{\rm T}^{3}$, whose isotropy representation decomposes as $\bb{R}^{7}=V^{4}+3\bb{R}$.
                 \end{proof}

{\small
\begin{center}
 {\bf{Table 2.}}    Compact  almost effective homogeneous 7-manifolds $M^{7}=G/H$.
 \[
\begin{tabular} {r | l | l | l | c | c | l }
$d$  & $\fr{h}$ & $\fr{g}\equiv\fr{g}^{d+7}$ &   $M^{7}=G^{d+7}/H^{d}$ & $\G_2^{\rm inv}$ & $\text{np}\G_2^{\rm inv}$ & $\mc{E}_{\rm inv}$ \\
\thickline
$d=0$ & $\{0\}$                                   & $7\fr{u}_{1}$                                                & ${\rm T}^{7}$ & $\checkmark$ & $\times$ & $\times$ \\
       &                                              & $\fr{su}_2+ 4\fr{u}_{1}$                        & $\SU_{2}\times {\rm T}^{4}= \Ss^{3}\times{\rm T}^{4}$ & $\checkmark$ & $\times$ & $\times$ \\
       &                                              & $2\fr{su}_2+ \fr{u}_{1}$                            & $\SU_2\times\SU_2\times {\rm T}^{1}= \Ss^{3}\times\Ss^{3}\times\Ss^{1}$ & $\checkmark$ & $\times$ & $\times$ \\
       \thickline
$d=1$ & $\fr{u}_{1}$                            & $\fr{su}_3$                                                     & $W_{k, l}:=\displaystyle\frac{\SU_{3}}{\U_{1}^{k, l}}$   & $\checkmark$ & $\checkmark$ & 2  \\
 &    & & $(k, l\in\bb{Z}_{\geq 0}, \ k\geq l\geq 0, \ kl>1)$ & & \\

       &                                             & $2\fr{su}_2+ 2\fr{u}_{1}$                    & $\bb{V}_{4, 2}\wi{\times}{\rm T}^{2}= \displaystyle\frac{\SU_{2}\times \SU_{2}}{\U_1} \wi{\times} {\rm T}^{2}=\frac{\SO_{4}}{\SO_{2}}\wi{\times}{\rm T}^{2}$ & $\checkmark$ & $\times$ & $\times$ \\
       &                                             &  $\fr{su}_2+ 5\fr{u}_{1}$                       & $\bb{C}P^{1}\wi{\times}{\rm T}^{5}=\Ss^{2}\wi{\times}{\rm T}^{5} = \displaystyle\frac{\SU_{2}}{\U_{1}}\wi{\times}  {\rm T}^{5} $ & $\times$ & $\times$  & $\times$  \\
       \thickline
$d=2$ &$2\fr{u}_{1}$                           & $\fr{su}_2+6\fr{u}_{1}$                         &  no almost effective coset & $\times$ & $\times$    & $\times$    \\
       &                                             & $2\fr{su}_2+3\fr{u}_{1}$                     &  $\displaystyle\frac{\SU_{2}}{\U_1}\times\frac{\SU_{2}}{\U_1}\wi{\times} {\rm T}^{3}=\Ss^{2}\times\Ss^{2}\wi{\times} {\rm T}^{3}$  & $\times$ & $\times$ & $\times$        \\
       &                                             &  $3\fr{su}_2$                                                &  $M_{a, b, c}=\displaystyle\frac{\SU_{2}\times\SU_{2}\times\SU_{2}}{\U_1\times\U_1}$ & $\checkmark$ & $\checkmark$   & 1 or 2 \\ 
  &    & & $(a\geq b\geq c\geq 0$, $a>0$, ${\rm gcd}(a, b, c)=1)$ & & \\
       &                                             &    $\fr{su}_3+\fr{u}_{1}$                             & $\bb{F}_{1, 2}\wi{\times} {\rm S}^{1} = \displaystyle\frac{\SU_{3}}{{\rm T}_{\rm max}}\wi{\times}\Ss^{1}$ & $\checkmark$ & $\times$  & $\times$       \\
       &                                              &                                                                    & $W_{k, l}:=\displaystyle\frac{\SU_{3}}{\U_{1}^{k, l}}$ \  $(k, l \ \text{arbitary})$ & $\checkmark$ & $\checkmark$  & 2     \\
       \thickline
$d=3$ & $\al_{1})$   $\fr{su}_{2}=\fr{so}_{3}^{4}$          & $\fr{su}_2+ 7\fr{u}_{1}$                          &  no almost effective coset & $\times$ & $\times$  & $\times$      \\ 
          &                      & $\fr{sp}_2$                                           & $\Ss^{7}_{V^{4}+3\bb{R}} = \ \displaystyle\frac{\Sp_{2}}{\Sp_{1}}$ & $\checkmark$ & $\checkmark$ & 2    \\
          &               & $\fr{su}_3+2\fr{u}_{1}$                          & $\Ss^{5}_{V^{4}+\bb{R}}\times {\rm T}^{2}= \displaystyle\frac{\SU_{3}}{\SU_{2}}\times {\rm T}^{2}$ & $\checkmark$ & $\times$   & $\times$      \\
          & $\al_2)$ $\fr{su}_{2}^{c}=\fr{so}_{3}^{(4, 3)}$ & $\fr{g}^{10}\supset\fr{su}_{2}^{c}$ &  no almost effective coset & $\times$ & $\times$       & $\times$  \\
                 &    $\al_{3})$   $\fr{so}_{3}^{3}$                                 & $2\fr{su}_2+ 4\fr{u}_{1}$                    &   $ \Ss^{3}\times {\rm T}^{4} =\displaystyle\frac{\SO_{4}}{\SO_{3}}\times{\rm T}^{4}=\displaystyle\frac{\SU_{2}\times \SU_{2}}{\Delta \SU_{2}}\times {\rm T}^{4}$ & $\checkmark$ & $\times$   & $\times$     \\
        &       $\al_{4})$  $\fr{so}_{3}^{(3, 3)}$                                         & $3\fr{su}_2+\fr{u}_{1}$                         &   $\displaystyle\frac{\SO_{3}\times\SO_{3}\times\SO_{3}}{\Delta\SO_{3}}\times\Ss^{1} =\Ss^{3}\times\Ss^{3}\times\Ss^{1}$ & $\checkmark$ & $\times$   & $\times$     \\
            &         & $\fr{so}_5$                   & $\bb{V}_{5, 2}=   \SO_{5}/\SO_{3}^{\rm st}$ & $\checkmark$ & $\checkmark$    & 1   \\
        &       $\al_{5})$  $\fr{so}_{3}^{5}$                                      & $\fr{su}_3+2\fr{u}_{1}$                          & $Q_{1}^{7}=\displaystyle\frac{\SU_{3}}{\SO_{3}}\times {\rm T}^{2}$ & $\times$ & $\times$  & $\times$      \\

        &      $\al_{6})$  $\fr{so}_{3}^{7}$  & $\fr{so}_5$                                  & $B^{7}= \SO_{5}/\SO_{3}^{\rm ir}$ & $\checkmark$ & $\checkmark$   & 1 $g_{\rm irr}$   \\
              &     $3\fr{u}_{1}$            & $3\fr{su}_2+ \fr{u}_{1}$                          & $\Ss^{2}\times\Ss^{2}\times\Ss^{2}\widetilde{\times}\Ss^{1}$ & $\times$ & $\times$ & $\times$       \\ 
              \thickline
$d=4$ & $\be_1)$    $\fr{so}_{3}^{4}+\fr{u}_{1}^{2}$   &   $\fr{su}_3+\fr{su}_2$                        &   $\Ss^{5}_{V^{4}+\bb{R}}\times\Ss^{2}=\displaystyle\frac{\SU_3}{\SU_2}\times\frac{\SU_2}{\U_1}$  & $\times$ & $\times$  & 1     $g_{\rm sym}$    \\
   &                                            & $\fr{so}_{5}+\fr{u}_{1}$                         &  $ \bb{C}P^{3}\wi{\times}\Ss^{1}= \displaystyle\frac{\SO_{5}}{\U_{2}}\wi{\times}{\Ss}^{1}=\displaystyle\frac{\Sp_2}{\Sp_{1}\times\U_1}\wi{\times}{\Ss}^{1}$ & $\checkmark$ & $\times$ & $\times$        \\ 
                 & $\be_2)$ $\fr{so}_{3}^{4}+\fr{u}_{1}^{2, 2}=\fr{u}_2$ &           $\fr{su}_3+\fr{su}_2$                        &   $\bb{C}P^{2}\times\Ss^{3}=  \displaystyle\frac{\SU_{3}}{\U_{2}}\times\SU_{2}$  & $\times$ & $\times$   & 1 $g_{\rm sym}$    \\
                  &                                             & $\fr{su}_3+3\fr{u}_{1}$                          &  $\bb{C}P^{2}\wi{\times}{\rm T}^{3} \ = \  \displaystyle\frac{\SU_{3}}{\U_{2}}\wi{\times} {\rm T}^{3}$ & $\times$ & $\times$ & $\times$        \\
                       & $\be_3)$ $\fr{so}_{3}^{4}+\fr{u}_{1}^{2, 2, 2}$ &  $\fr{su}_3+\fr{su}_2$                        &   $N_{a, b}=\displaystyle\frac{\SU_{3}\times\SU_{2}}{\SU_{2}\times\U_1}=\Big(\displaystyle\frac{\SU_3}{\SU_2}\times\SU_2\Big)/\Delta\U_1$  & $\checkmark$ & $\checkmark$  & 1     \\

       &    $\be_4)$    $\fr{su}_{2}^{c}+\fr{u}_{1}^{2, 2}=\fr{u}_{2}^{c}$                                         &   $\fr{su}_3+\fr{su}_2$               &                                                                                                               $W_{1, 1}= \ \displaystyle\frac{\SU_{3}\times\SU_{2}}{\SU_{2}^{c}\times\U_1}$ & $\checkmark$ & $\checkmark$  & 2      \\ 

      &                                             & $\fr{sp}_{2}+\fr{u}_{1}$        &  $\Ss^{7}_{V^{4}+V^{3}} = \ \displaystyle\frac{\Sp_{2}\times\U_{1}}{\Sp_{1}\times\Delta\U_{1}}$ & $\checkmark$ & $\checkmark$ & 2       \\

             & $\be_5)$ $\fr{so}_{3}^{3}+\fr{u}_{1}^{2}$                & $3\fr{su}_{2}+2\fr{u}_{1}$ & $\displaystyle\frac{\SO_4}{\SO_3}\times\frac{\SU_2}{\U_1}\wi{\times}{\rm T}^{2}=\Ss^{3}\times\Ss^{2}\wi{\times}{\rm T}^{2}$ & $\times$ & $\times$       & $\times$  \\
             & $\be_6)$ $\fr{so}_{3}^{3}+\fr{u}_{1}^{2, 2}$   &                                             $\fr{so}_5+\fr{u}_1$     \  & no almost effective coset & $\times$ & $\times$       & $\times$  \\  
      &  $\be_7)$ $\fr{so}_{3}^{(3, 3)}+\fr{u}_1^{2, 2, 2}$                             & $\fr{so}_{5}+\fr{u}_1$ &   ${\rm Gr}_2(\mathbb{R}^5) \wi{\times}\Ss^{1}=\displaystyle\frac{\SO_5}{\SO_3\times \SO_2}\wi{\times}\Ss^{1}$ & $\times$ & $\times$   & $\times$     \\
       &  $\be_8)$  $\fr{so}_{3}^{5}+\fr{u}_1^{2}$            &                    $\fr{su}_3+\fr{su}_2$   & $Q_{2}^{7}=\displaystyle\frac{\SU_{3}}{\SO_{3}}\times\frac{\SU_2}{\U_1} \ = \ \displaystyle\frac{\SU_{3}}{\SO_{3}}\times\Ss^{2}$   & $\times$ & $\times$ & 1 $g_{\rm sym}$      \\

           &     $4\fr{u}_{1}$                                & $\fr{g}^{11}\supset 4\fr{u}_{1}$   & no almost effective coset & $\times$ & $\times$       & $\times$  \\  
  \hline
            \end{tabular}
          \]
          \[
\begin{tabular} {r | l | l | l | c | c | c}
$d$  & $\fr{h}$ & $\fr{g}\equiv\fr{g}^{d+7}$ &   $M^{7}=G^{d+7}/H^{d}$ & $\G_2^{\rm inv}$ & $\text{np}\G_2^{\rm inv}$ & $\mc{E}_{\rm inv}$ \\
\thickline

$d>4$ &  $\text{Then}  \ r=2,3$  &    & & &\\
           &   $\text{\bf Case (I)} : \ r=2$ & & & &\\
           \thickline
$d=6$ & $\fr{so}_{4}=\fr{su}_{2}+\fr{su}_{2}'$                 &  $3\fr{su}_2+ 4\fr{u}_{1}$                        & no almost effective coset & $\times$ & $\times$       & $\times$ \\  

           &                                        & $4\fr{su}_2+ \fr{u}_{1}$                           & $\displaystyle\frac{\SU_{2}\times \SU_{2}}{\Delta \SU_{2}}\times \displaystyle\frac{\SU_{2}\times \SU_{2}}{\Delta \SU_{2}}\times \Ss^{1}$  & $\checkmark$ & $\times$ & $\times$        \\ 
           &                                        & $\fr{su}_3+ 5\fr{u}_{1}$                           &  no almost effective coset  & $\times$ & $\times$     & $\times$   \\   
           &                                       & $\fr{so}_{5}+ 3\fr{u}_{1}$                            & $\Ss^{4}\times{\rm T}^{3}= \displaystyle\frac{\SO_{5}}{\SO_{4}}\times{\rm T}^{3} $ & $\times$ & $\times$ & $\times$       \\
           &                                        & $\fr{so}_{5}+\fr{su}_2$                            & $\Ss^{4}\times\Ss^{3}=\displaystyle\frac{\SO_{5}}{\SO_{4}}\times\SU_{2}$ & $\times$ & $\times$  & 1 $g_{\rm sym}$     \\
           &  $\fr{so}_{4}^{(4, 3)}=\fr{su}_{2}+\fr{su}_{2}^{c}$                                      & $\fr{sp}_{2}+\fr{sp}_{1}$                              & $\Ss^{7}_{V^{4}+\bb{R}^{3}}= \displaystyle\frac{\Sp_{2}\times\Sp_{1}}{\Sp_{1}\times\Delta\Sp_{1}}$ & $\checkmark$ & $\checkmark$ & 1      \\
          \hline

$d=8$ &    $\fr{su}_3$                   & $\fr{su}_{4}\supset\fr{su}_{3}$           & $\Ss^{7}_{V^{6}+\bb{R}}=\displaystyle\frac{\SU_{4}}{\SU_{3}}$ & $\checkmark$ & $\checkmark$ & 1 $g_{\rm stn}$ \\
           &                                          & $\fr{g}_{2}+\fr{u}_{1}$                          & $\Ss^{6}_{\rm irr}\times\Ss^{1}=\displaystyle\frac{\G_2}{\SU_{3}}\times{\rm S}^{1}$ & $\checkmark$ & $\times$ & $\times$  \\
\hline
 $d=10$ &  $\fr{so}_{5}$                                       &  $\fr{so}_{6}+2\fr{u}_{1}$                           & $\Ss^{5}_{\rm sym}\times {\rm T}^{2}= \displaystyle\frac{\SO_{6}}{\SO_{5}}\times{\rm T}^{2}$  & $\checkmark$ & $\times$ & $\times$  \\
              \hline
$d=14$ &   $\fr{g}_{2}$                & $\fr{so}_{7}\supset\fr{g}_{2}$                                                     & $\Ss^{7}_{\rm irr}=\displaystyle\frac{\SO_{7}}{\G_2}$ & $\checkmark$ & $\checkmark$ & 1 $g_{\rm irr}$ \\
   \thickline
             &   $\text{\bf Case (II)} : \ r=3$ & & & & \\
             \thickline
               $d=5$ &  $\fr{su}_{2}+2\fr{u}_{1}$  & $\fr{g}^{12}\supset \fr{su}_{2}+2\fr{u}_{1}$ &  no almost effective coset & $\times$ & $\times$ & $\times$ \\
               \hline
                          $d=7$ &  $\fr{so}_{4}+\fr{u}_{1}$ & $\fr{so}_{5}+\fr{su}_{2}+\fr{u}_{1}$ & $\Ss^{1}\wi{\times}\displaystyle\frac{\SU_{2}}{\U_{1}}\times\displaystyle\frac{\SO_{5}}{\SO_{4}}=\Ss^{1}\wi{\times}\Ss^{2}\times\Ss^{4}$ & $\times$ & $\times$ & $\times$  \\
\hline

                          $d=9$ &  $\fr{u}_{3}$ & $\fr{su}_{4}+\fr{u}_{1}$ & $\displaystyle\frac{\SU_{4}}{\U_{3}}\wi{\times}\Ss^{1}=\bb{C}P^{3}\wi{\times}\Ss^{1}$ & $\times$ & $\times$ & $\times$  \\
                                      &   $3\fr{su}_2=\fr{so}_{4}+\fr{su}_{2}$                                    & $\fr{so}_{5}+ \fr{so}_{4}$                            & $\displaystyle\frac{\SO_{5}}{\SO_{4}}\times\frac{\SU_{2}\times \SU_{2}}{\Delta \SU_{2}}= \Ss^{4}\times\Ss^{3} $ & $\times$ & $\times$ & 1 $g_{\rm sym}$ \\

                          \hline
$d=11$ & $\fr{so}_{5}+\fr{u}_{1}$ & $\fr{so}_{6}+\fr{so}_{3}$ & $\displaystyle\frac{\SO_6}{\SO_5}\times  \displaystyle\frac{\SO_3}{\SO_2}   = \Ss^{5}_{\rm sym}\times\Ss^{2}$ & $\times$ & $\times$ & 1 $g_{\rm sym}$ \\
                          \hline

$d=15$ & $\fr{su}_4=\fr{so}_6$                 & $\fr{g}^{22}\supset\fr{su}_{4}$          & no almost effective coset  & $\times$ & $\times$ & $\times$ \\ 
\hline
 $d=28$ & $\fr{so}_{7}$                 & $\fr{so}_{8}\supset\fr{so}_{7}$                                & $\Ss^{7}_{\rm sym}=\displaystyle\frac{\SO_{8}}{\SO_{7}}$   & $\times$ & $\times$ & 1 $g_{\rm sym}$  \\
 \thickline
                       \end{tabular}
\]
\end{center}}

 Table 2 implies   the  following  classification theorem.

\begin{theorem} \label{classg2}
A 7-dimensional compact connected almost effective homogeneous   manifold $M^{7}=G/H$ of a compact Lie group $G$, is diffeomorphic  either to the flat tours ${\rm T}^{7}$ or to a homogeneous manifold of the following list (up to covering)
\[
\begin{tabular}{l|l|l|l}
    $ \ {\rm S}^{7}= \displaystyle\frac{\SO_{8}}{\SO_{7}}= \frac{\SU_{4}}{\SU_{3}}= \frac{\SO_{7}}{\G_2}=\frac{\Sp_{2}}{\Sp_{1}}$ & $ \ \Ss^{3}\times{\rm T}^{4}$ &   $ \    \Ss^{3}\times\bb{C}P^{2}$ &      $ \  \bb{V}_{4, 2}\wi{\times}{\rm T}^{2}$\\ 
    \ \ \ \ \ \ \   $= \displaystyle\frac{\Sp_{2}\times\U_1}{\Sp_{1}\times\Delta\U_{1}}=\displaystyle\frac{\Sp_{2}\times\Sp_{1}}{\Sp_{1}\times\Delta\Sp_{1}}$  &    $ \ \Ss^{4}\times{\rm T}^{3} $ &           $ \ \bb{C}P^{1}\wi{\times}{\rm T}^{5}$ & \    ${\rm Gr}_{2}(\bb{R}^{5})\wi{\times}\Ss^{1}$  \\ 
  $ \ \Ss^{2}\times\Ss^{2}\times\Ss^{2}\wi{\times}\Ss^{1}$          &               $\ \Ss^{5}\times{\rm T}^{2}$                    & $ \ \bb{C}P^{2}\wi{\times}{\rm T}^{3}$      &   $ \ M_{a, b, c}=\displaystyle\frac{\Ss^{3}\times\Ss^{3}\times\Ss^{3}}{\U_1\times\U_1}$  \\
                         $ \ \Ss^{3}\times\Ss^{3}\times\Ss^{1}$    &      $   \  \Ss^{5}\times\Ss^{2}$                                                                                                                    &  $ \ \bb{C}P^{3}\wi{\times}\Ss^{1}$   & $ \ B^{7}= \SO_{5}/\SO_{3}^{\rm ir}$  \\
                         $ \ \Ss^{4}\times\Ss^{2}\wi{\times}\Ss^{1}$      &    $ \ \Ss^3  \times \Ss^4$     &    $ \ \bb{F}_{1, 2}\wi{\times}\Ss^{1}$  &$ \ \bb{V}_{5, 2}\cong T^{1}\Ss^{3}=\SO_{5}/\SO_{3}^{\rm st}$ \\
                         $ \ \Ss^{3}\times\Ss^{2}\times\Ss^{2}$      &  $ \  \Ss^{6}\times\Ss^{1}$         & $  \ W_{k, l}=\displaystyle\frac{\SU_{3}}{\U_1^{k, l}}$        & $ \ N_{a, b}=\displaystyle\frac{\SU_{2}\times \SU_{3}}{\SU_{2}\times\U_{1}}$ \\
                        $ \ \Ss^{3}\times\Ss^{2}\wi{\times}{\rm T}^{2}$                                                                                                                      & $Q^{7}_{1}=\displaystyle\frac{\SU_{3}}{\SO_{3}}\times {\rm T}^{2}$  & $Q^{7}_{2}=\displaystyle\frac{\SU_{3}}{\SO_{3}}\times\Ss^{2}$  & \ $W_{1, 1}=\displaystyle\frac{\SU_{3}\times\SU_{2}}{\SU_{2}^{c}\times\U_1}$ \\
                    \  $\Ss^{2}\times\Ss^{2}\wi{\times} {\rm T}^{3}$
\end{tabular}
\]
Notice that several manifolds in this list admit several presentations as homogeneous spaces, e.g. $\Ss^3, \Ss^5, \Ss^7$,   $\mathbb{C}P^3 $, $\bb{C}P^{3}\wi{\times}\Ss^{1}$, $\Ss^{5}\times\Ss^{2}$, $\bb{V}_{4, 2}\wi{\times}{\rm T}^{2}$,  $\Ss^{3}\times\Ss^{3}\times\Ss^{1}$ and other (for details see Table 2).


\end{theorem}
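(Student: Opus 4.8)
The plan is to reduce the classification to a finite enumeration of pairs $(\fr{g},\fr{h})$ of compact Lie algebras with $\fr{h}\subset\fr{g}$ of codimension $7$, together with a prescribed embedding $\fr{h}\hookrightarrow\fr{so}_7$ realised as the isotropy representation; this is exactly the content of Table~2, and the theorem then follows by reading off the distinct diffeomorphism types occurring there. First I would invoke the reduction already set up in this section: almost effectivity forces $\chi_*\colon\fr{h}\to\fr{so}_7$ to be injective, so $\fr{h}$ is conjugate to a subalgebra of $\fr{so}_7$, and conversely $M^{7}=G/H$ is recovered from the reductive decomposition $\fr{g}=\fr{h}+\fr{m}$ once one knows that $\fr{h}$ integrates to a closed (hence compact) subgroup $H$ of a compact group $G$ with Lie algebra $\fr{g}$. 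Since $\fr{so}_7=\fr{b}_3$ has rank $3$, every candidate $\fr{h}$ has rank $\le 3$, and the classification of such subalgebras up to conjugacy in $\SO_7$ is precisely Table~1, obtained from Lemmas~\ref{dyn1} and~\ref{dyn2} by running through the irreducible $\fr{so}_3$-representations of dimension $\le 7$ and computing the centralisers of the resulting subalgebras.

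Next, for each subalgebra $\fr{h}=\fr{h}^{d}$ of Table~1 I would determine all compact Lie algebras $\fr{g}=\fr{g}^{d+7}$ containing $\fr{h}$ as a codimension-$7$ subalgebra which induces the prescribed isotropy module. Writing $\fr{g}=\fr{z}(\fr{g})\oplus[\fr{g},\fr{g}]$ with $[\fr{g},\fr{g}]$ semisimple, the constraint $\dim\fr{g}=d+7$ together with the fact that there are only finitely many compact semisimple Lie algebras of a given dimension leaves finitely many possibilities; moreover $\dim\fr{z}(\fr{g})$ is pinned down by the multiplicity of the trivial summand in the isotropy representation, since $\fr{z}(\fr{g})$ must act trivially on $\fr{m}$ and hence lie inside the $\fr{h}$-trivial part of $\fr{m}$. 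For the semisimple part one then checks, case by case, which simple (or semisimple) summands of $[\fr{g},\fr{g}]$ can accommodate the $\fr{so}_3$- or $\fr{su}_3$-type pieces of $\fr{h}$ with the correct centraliser; this is exactly the computation carried out in the Proposition above for the stratum $\fr{h}^{4}=\fr{so}_3+\fr{u}_1$, and I would repeat it systematically for $d=0,1,2,3$ and for $d>4$ (where necessarily $\rnk\fr{h}=2$ or $3$). Embeddings that cannot be completed to codimension $7$, or whose only completions violate almost effectivity, are discarded at this stage.

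Having produced every admissible pair $(\fr{g},\fr{h})$, the final step is to pass from Lie algebras to the manifold: take $G$ to be a suitable compact connected Lie group with Lie algebra $\fr{g}$, let $H\subset G$ be the connected subgroup with Lie algebra $\fr{h}$, and identify the diffeomorphism type of $G/H$ up to finite cover. The recurring phenomenon here is that a circle or torus factor of $H$ sits diagonally inside a torus factor of $G$, producing the twisted products $G/H\wi{\times}{\rm T}^{k}$ in the notation fixed before the Proposition; this is why several entries — the various spheres, $\bb{C}P^{3}$, $\Ss^{5}\times\Ss^{2}$, $\bb{V}_{4,2}\wi{\times}{\rm T}^{2}$, and so on — admit more than one homogeneous presentation. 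Collecting the resulting spaces yields Table~2, with ${\rm T}^{7}$ the unique entry coming from the abelian pair $\fr{g}=7\fr{u}_1$, $\fr{h}=\{0\}$, and reading off the distinct diffeomorphism types gives the list in the statement.

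I expect the main obstacle to be completeness and non-redundancy of the enumeration in the low-rank strata $d\le 2$: there the centre of $\fr{g}$ may be up to three-dimensional, many abelian summands occur, and one must be careful neither to miss a compact $\fr{g}$ containing $\fr{h}$ with the correct isotropy module nor to double-count spaces that turn out to be diffeomorphic under different group presentations. A secondary point, worth an explicit remark, is to check that each algebraic pair $(\fr{g},\fr{h})$ genuinely integrates to a \emph{closed} subgroup $H$ of a compact $G$, so that $G/H$ is an honest compact homogeneous manifold, and that the resulting space is, as claimed, almost effective; both facts are automatic once $\fr{h}=\chi_*(\fr{h})$, but they underpin the whole reduction.
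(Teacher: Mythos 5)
Your proposal follows essentially the same route as the paper: reduce via almost effectivity to the classification of subalgebras $\fr{h}\subset\fr{so}_7$ (Table~1, via Lemmas~\ref{dyn1} and~\ref{dyn2}), then enumerate for each $\fr{h}^{d}$ the compact Lie algebras $\fr{g}^{d+7}$ containing it with the prescribed isotropy module — splitting off the centre exactly as in Case~B of the paper's worked-out stratum $\fr{h}=\fr{so}_3+\fr{u}_1$ — and finally read off the diffeomorphism types collected in Table~2. The approach and the key reductions match the paper's proof, which is precisely the assembly of Table~2 from this two-step procedure.
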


\subsection{$(4, 7)$-decomposable  homogeneous supergravity backgrounds of  Type III$\al$.}
 The  classification of  compact simply-connected homogeneous weak  $\G_2$-manifolds \cite{FKMS} and that of homogeneous Lorentzian Einstein 4-manifolds \cite{Kom, Fels},  together   with   Theorem \ref{mainresult} yield  a large  list of  $(4, 7)$-decomposable   {\it homogeneous} supergravity backgrounds of  type III$\al$.  Recall   that a $\G_2$-manifold $(M^{7}, \omega)$ is called homogeneous if there is a transitive Lie group $G$ which leaves $\omega$ invariant.
 A classical result of Dynkin states that the Lie algebras  $ \mathfrak{so}_{3}^{7} $,  $\fr{so}_{4}^{(4, 3)}= \mathfrak{su}_{2}+ \mathfrak{su}_{2}^{c}$  and $\mathfrak{su}_{3}$  exhaust (up to conjugation)  all maximal    subalgebras  of  $\mathfrak{g}_2$.  Hence,  a homogeneous manifold  $M^{7} = G/H$ admits an invariant
 $G_2$-structure $\phi$ if and only if $M^{7}=\Spin_{7}/\G_2$ or $\chi_{*}(\mathfrak{h})$ belongs  to one of the subalgebras  $\fr{so}_{3}^{7}$, $\fr{so}_{4}^{(4, 3)}$ and $\fr{su}_{3}$.
Following the papers \cite{Le, Rei} and  \cite{FKMS} in Table 2 we  also indicate which of  the  compact almost effective homogeneous 7-manifolds $M^{7}=G/H$   admit  an invariant $\G_2$-structure and moreover an invariant weak $\G_2$-structure.  To track this information we use the notations ``$\G_2^{\rm inv}$''  and ``$\text{np}\G_2^{\rm inv}$'', respectively. For convenience, in the last column we also  include the number $\mc{E}_{\rm inv}$ of non-isometric invariant Einstein metrics, see also \cite{Cast, Duff2, FKMS, Nik} and Remark \ref{einrem} below.  By ``$\times$'' we   mean that the corresponding coset does not admit some of the aforementioned  invariant objects.

\begin{remark}\label{einrem}\textnormal{({\bf Remarks on Table 2 about homogeneous Einstein metrics})
For the  homogeneous spheres $\Ss^{5}, \Ss^{6}$ and $\Ss^{7}$ in Table 2 we  use a subscript with  the decomposition of the associated  tangent space into irreducible submodules, in particular the subscript  ``${\rm irr}$'' characterises an irreducible isotropy representation   (but not symmetric), while  ``${\rm sym}$''  means that  the corresponding sphere is  a symmetric space (and similarly for the metrics).  The  space $M_{a, b, c}$ is diffeomorphic to  $\Ss^{2}\times\Ss^{2}\times\Ss^{3}$ and is a  circle bundles over $\Ss^{2}\times\Ss^{2}\times\Ss^{2}$.  Details about the number of invariant Einstein metrics on $M_{a, b, c}$, which depends on the parameters $(a, b, c)$, can be found in \cite{Nik},  for example.   
 The Berger sphere $B^{7}$ and the 7-spheres  $\Spin_7/\G_2$ or $(\Sp_2\times\Sp_1)/(\Sp_1\times\Delta\Sp_1)$ admit a unique invariant proper weak  $\G_2$-structure, see \cite{Br1, Bar, FKMS} and   a unique invariant Einstein metric. In fact,  this structure on the squashed sphere $(\Sp_2\times\Sp_1)/(\Sp_1\times\Delta\Sp_1)$ is also invariant under the Lie group $\Sp_{2}\times\U_1$. Recall  now that the Allof-Wallach spaces $W_{k, l}=\SU_{3}/\U_{1}^{k, l}$, where $\U_{1}^{k, l}={\rm diag}(z^{l}, z^{k}, \bar{z}^{l+k})\subset \U_2\subset\SU_3$ with $z\in \Ss^{1}=Z(\U_2)$, $k\geq 1$, $l\geq 1$, ${\rm gcd}(k, l)=1$,    admit (up to homothety)  two  $\SU_3$-invariant weak   $\G_2$-structures and  two invariant Einstein metrics, see \cite{FKMS, Nik}.   These Einstein metrics are isometric each other for the special case of $W_{1, 0}$, in particular the weak $\G_2$-structures on $W_{1, 0}$ coincide.   By \cite{Boy} it is  also known that  the exceptional Allof-Wallach space $W_{1, 1}=(\SU_3\times\SU_2)/(\SU_{2}^{c}\times\U_1)$ and  the 7-sphere $\Ss^{7}=\Sp_2/\Sp_1$ exhaust all compact homogenous 3-Sasakian spaces in dimension seven. Note that a 7-dimensional 3-Sasakian manifold admits a second weak $\G_2$-structure which is proper, with the corresponding Einstein metric  to be a member of the canonical variation of the invariant 3-Sasakian Einstein metric, see \cite{FKMS}.  Recall also that the Stiefel manifold  $\bb{V}_{5, 2}$ is an Einstein-Sasakian manifold and  the unique $\SU_4$-invariant Einstein metric on $\SU_4/\SU_3$ is the standard one, $g_{\rm stn}$, see  \cite{Jen}. Finally notice that the homogeneous spaces  $Q^{7}_{1}=(\SU_{3}/\SO_{3})\times {\rm T}^{2}$  and  $Q^{7}_{2}=(\SU_{3}/\SO_{3})\times\Ss^{2}$
are products of the symmetric space $\SU_{3}/\SO_{3}$ with the 2-torus ${\rm T}^{2}$ and the 2-sphere $\Ss^{2}$, respectively.  The coset $\SU_3/\SO_3$  belongs to the family $\SU_{n}/\SO_{n}$, which according to \cite{Cahen} is spin only for $n=\text{even}$. Consequently, none of $Q_{1}^{7}$ and $Q_{2}^{7}$ are  spin or admit a $\G_2$-structure (see  Proposition \ref{topology}).  A difference between the symmetric spaces $Q_1^{7}, Q_{2}^{7}$  is that $Q_{1}^{7}$  is not simply-connected neither Einstein, in contrast to $Q_{2}^{7}$ which satisfies both these properties (it admits a unique invariant Einstein metric given by the product of the Killing metrics). }
\end{remark}



\subsection{Non existence  of invariant $\G_2$-structures and invariant $\G_2^{*}$-structures}
Let us describe now all compact almost effective homogeneous spaces $M^{7}=G/H$ which admit no $G$-invariant $\G_2$-structure and moreover no $\G_2$-structure.
This task  is based  on our classification Theorem \ref{classg2}, the column ``$\G_2^{\rm inv}$'' of Table 2 and Proposition \ref{topology}. We conclude the following

\begin{theorem}\label{NONGEN}
1) \  Let    $M^7 = G/H$ be a  compact connected   almost  effective  homogeneous 7-manifold   of  a   compact Lie  group  $G$.
  The  manifold  $M^{7}$   admits no $G$-invariant $\G_2$-structure (or equivalently, no $G$-invariant spin structure)  if and only if it is diffeomorphic (up to covering) to one of the following cosets:
\[
  \begin{tabular}{l | l }
  $\text{spin}$ & $\text{non-spin}$ \\
  \thickline
 $\Ss^{3}\times\Ss^{4}=(\SU_{2}\times \SU_{2}/\Delta \SU_{2})\times(\SO_5/\SO_4)$ & $\bb{C}P^{2}\times\Ss^{3}=(\SU_3/\U_2)\times\SU_2$   \\
 $\Ss^{4}\times {\rm T}^{3}=(\SO_5/\SO_4)\times {\rm T}^{3}$ & $\bb{C}P^{2}\wi{\times} {\rm T}^{3}=(\SU_3/\U_2)\wi{\times}{\rm T}^{3}$ \\
 $\Ss^{2}\times\Ss^{2}\times\Ss^{2}\times\Ss^{1}=(\SU_2/\U_1)^{3}\times\Ss^{1}$         &   $Q_{1}^{7}=(\SU_{3}/\SO_{3})\times {\rm T}^{2}$   \\
  $\Ss^{2}\times\Ss^{5}=(\SO_3/\SO_2)\times(\SO_6/\SO_5)$ & $Q_{2}^{7}=(\SU_{3}/\SO_{3})\times\Ss^{2}$   \\
$\bb{C}P^{1}\wi{\times}  {\rm T}^{5}=(\SU_2/\U_1)\wi{\times}{\rm T}^{5}$  &${\rm Gr}_{2}(\bb{R}^{5})\wi{\times}\Ss^{1}$  \\
 $\Ss^{2}\times\Ss^{2}\widetilde{\times} {\rm T}^{3}=(\SU_2\times\SU_2/\U_1\times\U_1)\wi{\times}{\rm T}^{3}$  &\\
 $\Ss^{3}\times\Ss^{2}\wi{\times}{\rm T}^{2}=(\SU_{2}\times \SU_{2}/\Delta \SU_{2}) \times(\SU_{2}/\U_1)\wi{\times}  {\rm T}^{2}$ & \\
 $\Ss^{4}\times\Ss^{2}\wi{\times}\Ss^{1}=(\SO_5/\SO_4)\times(\SO_3/\SO_2)\wi{\times}\Ss^{1}$ & \\
 $\bb{C}P^{3}\times\Ss^{1}=(\SU_4/\U_3)\wi{\times}\Ss^{1}$ & \\
 $\Ss^{7}=\SO_8/\SO_7$
                  \end{tabular}
                   \]
  2) \  Manifolds  from  the left  column  admit  a $\G_2$-structure which is not invariant, or in other words, admit  a generic  3-form which is not  invariant.
Inside the class of compact connected almost effective homogeneous   7-manifolds $M^{7}=G/H$ only the manifolds  from  the  right  column  doest not  admit  a $G_2$-structure.              \end{theorem}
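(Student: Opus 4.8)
The plan is to combine Theorem \ref{classg2}, the ``$\G_2^{\mathrm{inv}}$'' column of Table 2, and Proposition \ref{topology}, together with a spin/non-spin analysis of the cosets involved. By Theorem \ref{classg2} every compact connected almost effective homogeneous $7$-manifold $M^7=G/H$ occurs (up to covering) among the rows of Table 2, and for each row the column ``$\G_2^{\mathrm{inv}}$'' records whether that particular presentation $G/H$ carries a $G$-invariant $\G_2$-structure. As recalled in the preceding subsection, this is governed by Dynkin's theorem --- the only maximal subalgebras of $\fr{g}_2$ being $\fr{so}_{3}^{7}$, $\fr{su}_2+\fr{su}_{2}^{c}$ and $\fr{su}_3$, acting on $\bb{R}^{7}$ as $V^{7}$, $V^{4}+V^{3}$ and $V^{6}+\bb{R}$ --- together with the classifications of \cite{Le,Rei} (and \cite{FKMS} for the weak case): an invariant $\G_2$-structure exists iff $M^7=\Spin_{7}/\G_2$ or the isotropy image $\chi_*(\fr{h})\subset\fr{so}_{7}$ is conjugate in $\SO_{7}$, as a subalgebra carrying its given $7$-dimensional isotropy representation, into one of these three. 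So the first step is simply to list the rows of Table 2 whose ``$\G_2^{\mathrm{inv}}$'' entry is ``$\times$'': these are precisely the cosets of Part 1 (the Lie group ${\rm T}^{7}$, which carries a bi-invariant $\G_2$-structure, is correctly absent). The parenthetical equivalence with the non-existence of a $G$-invariant spin structure belongs to the same circle of facts, via the description of a $\G_2$-structure as a nowhere-zero invariant spinor field (see \cite{FKMS,Le}).

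It then remains to split this list according to whether $M^7$ is spin, which by Proposition \ref{topology} is the same as carrying a $\G_2$-structure at all; once this is done Part 2 is immediate, the spin column being exactly the cosets admitting a $\G_2$-structure but no invariant one, and the non-spin column exactly those admitting no $\G_2$-structure. For the spin side, finite products of spin manifolds are spin, which settles $\Ss^{3}\times\Ss^{4}$, $\Ss^{4}\times{\rm T}^{3}$, $\Ss^{2}\times\Ss^{2}\times\Ss^{2}\times\Ss^{1}$, $\Ss^{2}\times\Ss^{5}$ and $\Ss^{7}=\SO_{8}/\SO_{7}$; and each twisted product $\bb{C}P^{1}\wi{\times}{\rm T}^{5}$, $\Ss^{2}\times\Ss^{2}\wi{\times}{\rm T}^{3}$, $\Ss^{3}\times\Ss^{2}\wi{\times}{\rm T}^{2}$, $\Ss^{4}\times\Ss^{2}\wi{\times}\Ss^{1}$, $\bb{C}P^{3}\wi{\times}\Ss^{1}$ is the total space of a torus bundle over a spin base whose vertical tangent bundle is trivial (being associated to the $\Ad$-action of a torus on its abelian Lie algebra), so its second Stiefel--Whitney class is the pullback of $w_2$ of the base and vanishes --- here one uses that $\bb{C}P^{1}=\Ss^{2}$ and $\bb{C}P^{3}$ are spin, the elementary identity $w_2(\bb{C}P^{n})\equiv(n+1)\bmod 2$. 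For the non-spin side, $\bb{C}P^{2}$ is not spin, hence neither is $\bb{C}P^{2}\times\Ss^{3}$, nor $\bb{C}P^{2}\wi{\times}{\rm T}^{3}$ (whose isotropy representation $V^{4}+3\bb{R}$ carries the same obstruction to lifting into $\Spin_{7}$ as that of $\bb{C}P^{2}=\SU_{3}/\U_{2}$); the space $\SU_{3}/\SO_{3}$ lies in the family $\SU_{n}/\SO_{n}$, which by \cite{Cahen} is spin only for even $n$, so $Q_{1}^{7}=(\SU_{3}/\SO_{3})\times{\rm T}^{2}$ and $Q_{2}^{7}=(\SU_{3}/\SO_{3})\times\Ss^{2}$ are non-spin; and ${\rm Gr}_{2}(\bb{R}^{5})=\SO_{5}/(\SO_{3}\times\SO_{2})$ is the complex quadric $Q_3$, with $w_2\equiv c_1\equiv 3h\not\equiv 0$, so ${\rm Gr}_{2}(\bb{R}^{5})\wi{\times}\Ss^{1}$ is non-spin. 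This places every coset of Part 1 into the correct column and finishes the proof.

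\textbf{Main obstacle.} The only substantive work, and the part that genuinely relies on outside input, is the ``$\G_2^{\mathrm{inv}}$'' column itself: deciding, for each of the roughly forty isotropy pairs $(\fr{h},\fr{m})$, whether the given $7$-dimensional isotropy representation factors through an embedding $\fr{h}\hookrightarrow\fr{g}_2\subset\fr{so}_{7}$ is a question about conjugacy classes inside $\SO_{7}$ --- for instance, one must separate the two $\fr{so}_{4}$-subalgebras of $\fr{so}_{7}$, and the several $\fr{u}_1$- and $\fr{so}_{3}$-embeddings --- rather than about abstract Lie-algebra isomorphism, and this is exactly where \cite{Le,Rei} are invoked. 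By contrast, the Stiefel--Whitney computations that separate the two columns are routine given the standard facts about $w_2$ of products, of $\bb{C}P^{n}$, of complex quadrics and of $\SU_{n}/\SO_{n}$.
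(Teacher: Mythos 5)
Your proposal follows essentially the same route as the paper: Theorem \ref{classg2} together with the ``$\G_2^{\rm inv}$'' column of Table 2 identifies the cosets with no invariant $\G_2$-structure, and Proposition \ref{topology} turns the spin/non-spin dichotomy into the existence or non-existence of any $\G_2$-structure; the reading-off of the ``$\times$'' rows is accurate and the manifold ${\rm T}^7$ is indeed correctly excluded. The only real divergence is in how the two columns are justified: the paper does this via the lifting criterion for \emph{invariant} spin structures and the Cahen--Gutt--Trautman theorem (its Example after the theorem: for simply connected $G$, spin implies that the isotropy representation lifts to $\Spin_7$, so for a non-simply-connected transitive group an existing spin structure can fail to be invariant, as for $\Ss^3\times\Ss^4$ under $\SO_5\times\SU_2$), whereas you compute $w_2$ directly for products, principal torus bundles, $\bb{C}P^n$, the quadric and $\SU_n/\SO_n$. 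For the left column your characteristic-class argument is clean and, if anything, more self-contained than the paper's.

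There is, however, one soft spot, concentrated on the twisted products placed in the right column. For $\bb{C}P^{2}\wi{\times}{\rm T}^{3}$ your stated justification is that the isotropy representation $V^4+3\bb{R}$ does not lift to $\Spin_7$; but that only excludes a $G$-invariant spin structure, and since $\SU_3\times{\rm T}^3$ (and likewise $\SO_5\times\U_1$ for ${\rm Gr}_2(\bb{R}^5)\wi{\times}\Ss^1$) is not simply connected, it does not exclude a spin structure altogether. Worse, your own correct observation that $w_2$ of a principal torus bundle is the pullback of $w_2$ of the base shows that the answer depends on the twisting: the Gysin sequence kills the mod-$2$ reductions of the Euler classes, so a twisting with an odd Euler number produces a \emph{spin} total space (for instance the degree-one twisting of ${\rm Gr}_2(\bb{R}^5)\wi{\times}\Ss^1$ is the $2$-connected Stiefel manifold $\bb{V}_{5,2}$), while only the even twistings --- in particular the untwisted products $\bb{C}P^2\times{\rm T}^3$ and ${\rm Gr}_2(\bb{R}^5)\times\Ss^1$ --- are non-spin. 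So the column assignment of these two families holds only for suitable $\psi$. This imprecision is inherited from the paper's Table 2 rather than introduced by you, but since your write-up purports to verify the columns by computing $w_2$, the parenthetical argument for $\bb{C}P^{2}\wi{\times}{\rm T}^{3}$ as stated does not prove what is needed and should either be replaced by the untwisted-case computation or qualified by the parity condition on the twisting.
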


\noindent  Theorem \ref{NONGEN}  gives rise to the following natural  questions for further research.

\medskip
{\bf Question 1.}  What is the explicit form of the non-invariant spin structure, or equivalent, non-invariant $\G_2$-structure assigned in Theorem \ref{NONGEN}? 

\medskip
{\bf Question 2.} What is the  symmetry group corresponding to such a structure?

\medskip
\noindent  These type of questions are in general difficult. To our knowledge,  they have been  examined for example  in \cite{Le2} for the coset $\Ss^{3}\times\Ss^{4}$ and for $\G_2^{*}$-structures.  Below we also describe our conclusions for non-existence of $\G_2^{*}$-structures. But firstly, let us   analyse some example  and enlighten the details of Theorem \ref{NONGEN}.
\begin{example}
\textnormal{The space $\Ss^{3}\times\Ss^{4}$ is  a spin manifold and  by Proposition \ref{topology}, also a $\G_2$-manifold.  However, this $\G_2$-structure   is not  invariant  with respect to  $G=\SO_{5}\times\SU_{2}$, where we identify  $\Ss^{3}\times\Ss^{4}\cong\SU_{2}\times(\SO_{5}/\SO_{4})$. Indeed,  a spin structure on a  seven-dimensional oriented  connected homogeneous Riemannian manifold $(M^{7}=G/H, g)$ with a reductive decomposition $\fr{g}=\fr{h}+\fr{m}$ is {\it invariant} if the isotropy representation $\chi : H\to\SO(\fr{m})$ lifts to $\Spin(\fr{m})\cong\Spin_{7}$, i.e. there exists a homomorphism $\hat{\chi} : H\to\Spin(\fr{m})$ which makes the following diagram commutative
  \[
    \xymatrix{
                                                           &   \Spin_{7} \ar[d]^{\Ad}    \\
                    K \  \ar[r]^{\chi}  \ar[ur]^{\hat{\chi}} &   \SO_{7}.    }
\]
Here, $\Ad : \Spin_{7}\to\SO_{7}$ is the double covering. Conversely, if $G$ is simply-connected and $(M^{7} = G/H, g)$ has a spin structure, then $\chi$ lifts to $\Spin(\fr{m})$,  i.e. the spin structure is $G$-invariant (see \cite[Thm.1, p.~146]{Cahen2}).  Hence in this case there is a bijective correspondence between the set of spin structures on $(M^{7} = G/H, g)$  and the set of lifts of $\chi$ onto $\Spin(\fr{m})$. If in addition $M=G/K$ is simply-connected and such a lift exists, then it will be unique.  For the product  $\Ss^{3}\times\Ss^{4}=\SU_{2}\times(\SO_{5}/\SO_{4})$ the full isometry group $G=\SO_{5}\times\SU_{2}$ is not simply-connected, so the spin structure which admits $\Ss^{3}\times\Ss^{4}$ does not lift to a $G$-invariant spin structure, or in other words the corresponding $\G_2$-structure is not $G$-invariant.    
All the spaces  in Theorem \ref{NONGEN} which are spin can be justified in a similar way.}
  \end{example}

\noindent {\bf Results about $\G_2^{*}$-structures.} Recall  that in a line  with  a $\G_2$-structure, a compact manifold $M^{7}$ admits a $\G_2^{*}$-structure if and only if $M^{7}$ is orientable and spin, see \cite[Main Theorem]{Le3}.  On the other hand, recall that $\SO_4$ is  the unique maximal compact subgroup of $\G_2^{*}$, but also a maximal subgroup $\G_2$. Therefore, in the homogeneous setting we see that a $G$-invariant $\G_2^{*}$-structure on a compact homogeneous space $M^{7}=G/H$ induces also a $G$-invariant $\G_2$ structure. However, the converse does not always   true, since given a compact connected coset $M^{7}=G/H$ such that $\chi(H)\subset\G_2$, then we may have $\chi(H)\nsubseteq \G_2^{*}$.  In fact, this is the case for the invariant $\G_2$-structures on the cosets
\begin{equation}\label{nog22}
 B^{7}=\displaystyle\frac{\SO_{5}}{\SO_{3}^{\rm ir}}, \quad \frac{\Spin_{7}}{\G_2},\quad \frac{\SU_4}{\SU_3}, \quad \frac{\G_2}{\SU_3}\times\Ss^{1}.
\end{equation}
In \cite{Le2} one obtains  the non-existence of   invariant $\G_2^{*}$-structures on the product $\Ss^{3}\times\Ss^{4}$.  Next we classify all   compact almost effective homogeneous spaces $M^{7}=G/H$ which can be characterised by the same non-existence.  
\begin{corol}
 1) \ A seven-dimensional compact connected almost effective  homogenous manifold $(M^{7}=G/H, g)$ of a  connected compact Lie group $G$ which  admits no $G$-invariant $\G_2^{*}$-structure is diffeomorphic (up to covering) to one of the cosets given in Theorem \ref{NONGEN}, 1), or one of the cosets given in (\ref{nog22}). \\
 2) \ Inside the class of compact connected almost effective homogeneous 7-manifolds $M^{7}=G/H$ only the manifolds $\bb{C}P^{2}\times\Ss^{3}$, $\bb{C}P^{2}\wi{\times} {\rm T}^{3}$, ${\rm Gr}_{2}(\bb{R}^{5})\wi{\times}\Ss^{1}$ and $Q_{1}^{7}, Q_{2}^{7}$ do not admit a $\G_2^{*}$-structure. 
\end{corol}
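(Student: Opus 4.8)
\emph{Assertion 2} is the short half, and I would dispose of it first. By the Main Theorem of \cite{Le3} a compact $7$-manifold carries a $\G_2^{*}$-structure if and only if it is orientable and spin --- which, by Proposition \ref{topology}, is exactly the condition for the existence of a $\G_2$-structure. Since every manifold in Theorem \ref{classg2} is oriented, such a manifold admits a $\G_2^{*}$-structure precisely when it is spin. By Theorem \ref{NONGEN}(1) the non-spin members of the classification are $\bb{C}P^{2}\times\Ss^{3}$, $\bb{C}P^{2}\wi{\times}{\rm T}^{3}$, $Q_{1}^{7}$, $Q_{2}^{7}$ and ${\rm Gr}_{2}(\bb{R}^{5})\wi{\times}\Ss^{1}$, so these are exactly the compact almost effective homogeneous $7$-manifolds with no $\G_2^{*}$-structure at all. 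This is Assertion 2.

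\emph{Assertion 1.} The key reduction I would use is the structural fact recalled just above the statement: up to conjugacy $\SO_4$ is the unique maximal compact subgroup of $\G_2^{*}$, and inside $\Gl_{7}(\bb{R})$ it is conjugate to the maximal subgroup $\SO_4\subset\G_2$ whose Lie algebra is the subalgebra $\fr{so}_4^{(4,3)}=\fr{su}_2+\fr{su}_2^{c}$ of Table 1 (so $\bb{R}^{7}=V^{4}+V^{3}$). Hence, exactly as the paper argues for $\G_2$-structures, a $G$-invariant $\G_2^{*}$-structure on $M^{7}=G/H$ --- an $H$-invariant $3$-form in the open $\Gl_{7}(\bb{R})$-orbit $\Omega^3_{-}$ --- exists if and only if $\chi(H)$ is conjugate in $\Gl_{7}(\bb{R})$ into $\G_2^{*}$; since $\chi(H)$ is compact this happens if and only if $\chi_{*}(\fr{h})$ is conjugate to a subalgebra of $\fr{so}_4^{(4,3)}$. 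As $\fr{so}_4^{(4,3)}\subset\fr{g}_2$, any such coset already carries an invariant $\G_2$-structure, so by Theorem \ref{NONGEN}(1) I am reduced to deciding, among the cosets of Table 2 admitting an invariant $\G_2$-structure, which ones satisfy $\chi_{*}(\fr{h})\subseteq\fr{so}_4^{(4,3)}$ up to conjugacy.

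This last question lives inside $\fr{g}_2$, and I would settle it by Dynkin's classification: up to conjugacy the maximal subalgebras of $\fr{g}_2$ are $\fr{so}_3^{7}$, $\fr{so}_4^{(4,3)}$ and $\fr{su}_3$, so every proper subalgebra is conjugate into one of these. If $\chi_{*}(\fr{h})$ is not conjugate into $\fr{so}_4^{(4,3)}$, then examining the three cases: either $\chi_{*}(\fr{h})=\fr{g}_2$; or $\chi_{*}(\fr{h})=\fr{so}_3^{7}$, since its only proper subalgebras are abelian and any abelian subalgebra of $\fr{g}_2$ lies in a maximal torus, which sits inside $\fr{so}_4^{(4,3)}$; or $\chi_{*}(\fr{h})$ is conjugate into $\fr{su}_3$ and must then equal $\fr{su}_3$, because up to conjugacy the proper subalgebras of $\fr{su}_3$ are tori (inside a maximal torus), the block $\fr{su}_2$, the real-form $\fr{so}_3$, and $\fr{su}_2+\fr{u}_1$, which on $\bb{R}^{7}$ realise respectively the decompositions $V^{4}+3\bb{R}$, $V^{3}+V^{3}+\bb{R}$ and $V^{4}+V^{2}+\bb{R}$, all of which already occur for subalgebras of $\fr{so}_4^{(4,3)}=\fr{su}_2+\fr{su}_2^{c}$. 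Thus the only isotropy algebras obstructing an invariant $\G_2^{*}$-structure, among cosets admitting an invariant $\G_2$-structure, are $\fr{g}_2$, $\fr{so}_3^{7}$ and $\fr{su}_3$ acting as $V^{6}+\bb{R}$; by Theorem \ref{classg2} and Table 2 these are realised exactly by $\Spin_{7}/\G_2$, $B^{7}=\SO_{5}/\SO_3^{\rm ir}$, $\SU_4/\SU_3$ and $\G_2/\SU_3\times\Ss^{1}$, i.e.\ the cosets of \eqref{nog22}. For the converse, the cosets of Theorem \ref{NONGEN}(1) admit no invariant $\G_2^{*}$-structure --- either they are non-spin, or an invariant $\G_2^{*}$-structure would force $\chi_{*}(\fr{h})\subseteq\fr{so}_4^{(4,3)}\subseteq\fr{g}_2$ and hence an invariant $\G_2$-structure, a contradiction --- and the four cosets of \eqref{nog22} admit none either, since $\fr{g}_2$ and $\fr{su}_3$ are too large to fit inside $\fr{so}_4^{(4,3)}$ while $\fr{so}_3^{7}$ acts irreducibly on $\bb{R}^{7}$ whereas every subalgebra of $\fr{so}_4^{(4,3)}$ preserves the splitting $V^{4}+V^{3}$. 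Together these give the two lists of Assertion 1.

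\emph{Expected main obstacle.} The delicate part is the bookkeeping in the previous paragraph: one must be sure that an isotropy algebra from Table 2 which embeds in $\fr{g}_2$ actually embeds in $\fr{so}_4^{(4,3)}$ and not merely in a ``wrong'' copy of $\fr{su}_3$ or $\fr{so}_3^{7}$, which forces a comparison of its genuine $7$-dimensional isotropy representation (as recorded in Tables 1--2) with the restrictions of the $7$-dimensional $\fr{g}_2$-module to its subalgebras; and for the twisted products $G/H^{\psi}$ the honest isotropy representation on $\fr{m}$ differs from the naive one and has to be recomputed weight by weight, which is exactly where a miscount is easiest. Everything else is the topological input of \cite{Le3}, the already-established Theorem \ref{NONGEN}, and Dynkin's list of subalgebras of $\fr{g}_2$.
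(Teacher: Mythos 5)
Your proof is correct and follows essentially the same route as the paper, which derives the corollary from the Main Theorem of \cite{Le3} (a compact $7$-manifold admits a $\G_2^{*}$-structure iff it is orientable and spin) together with the observation that $\SO_4$ is the unique maximal compact subgroup of $\G_2^{*}$ and is conjugate to the maximal $\SO_4\subset\G_2$ with Lie algebra $\fr{so}_4^{(4,3)}$, so that an invariant $\G_2^{*}$-structure exists precisely when $\chi_{*}(\fr{h})$ is conjugate into $\fr{so}_4^{(4,3)}$. The only difference is that you explicitly carry out the Dynkin-style bookkeeping showing that among isotropy algebras admitting an invariant $\G_2$-structure, exactly $\fr{g}_2$, $\fr{su}_3$ and $\fr{so}_3^{7}$ fail to sit inside $\fr{so}_4^{(4,3)}$ --- a verification the paper asserts for the cosets in (\ref{nog22}) without detail.
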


 \section{Some   solutions   of  the Maxwell  equation   for  non  generic   3-forms}

Next we present  examples of  compact  homogeneous Riemannian manifolds  $(M^{7}=G/H, g)$ which admit  {\it non-generic}  invariant  special  3-forms, that means   3-forms $\phi$ which  satisfy  the Maxwell equation  $\dd \phi =  f\star_{7}\phi$ and are of type III$\be$. 

\subsection{Solution  of  Type III$\be$ for the Maxwell   equation    on  $M^7 = \bb{CP}^2 \times \Ss^3$.}

The  simply-connected   homogeneous manifold
$M^7 = \bb{CP}^2 \times \Ss^3   =  (\SU_{3}/\U_{2})  \times \SU_{2} $ has no  spin  structure.  Hence there are not exist generic 3-forms.
However, here  we  will  show   that  it  is endowed with  invariant  (non-generic) special 3-forms.

  The Lie algebra $\fr{g}=\fr{su}_{3} + \fr{su}_{2}$ admits  the reductive decomposition
\[
\fr{g}  = \fr{h}+ \fr{m},\quad   \fr{h}  = \fr{u}_2, \quad  \fr{m}  = \fr{m}_1 + \fr{m}_2 = \bb{R}^4 + \fr{su}_{2}.
\]
The tangent space at the identity in $M^{7}$ can be identified with $\fr{m}$. Dually, we have $\fr{g}^* = \fr{m}_1^* + \fr{m}_2^* + \fr{h}^*$ where we identify $\fr{m}^* = \fr{m}_1^*+\fr{m}_2^*$ with the cotangent space at the identity. One can choose a basis adapted to this decomposition of $\fr{g}^*$: $\fr{m}_1^* = \mathrm{span}( \alpha^i )_{i=1,\ldots ,4} $, $\fr{m}_2^* =\{ \beta^i \}_{i=1 , \ldots, 3 }$, $\fr{h}^* =\{ \gamma^i \}_{i=1, \ldots , 4}$. Note that $\mathrm{Ann}( \fr{m}_1 ) = \fr{m}_2^* + \fr{h}^*$, $\mathrm{Ann}( \fr{m}_2 ) = \fr{m}_1^* + \fr{h}^*$ and $\mathrm{Ann}( \fr{h} ) = \fr{m}_1^* + \fr{m}_2^*$. The structure equations then read
\begin{align*}
\dd \alpha^1 & = - \alpha^2 \wedge \gamma^3 - \alpha^3 \wedge \left( 3 \, \gamma^1 - \gamma^2 \right) - \alpha^4 \wedge \gamma^4 \, ,\quad \dd \gamma^1  = - \alpha^1 \wedge \alpha^3 - \alpha^2 \wedge \alpha^4 \, ,\\
\dd \alpha^2 & = \alpha^1 \wedge \gamma^3 - \alpha^3 \wedge \gamma^4 - \alpha^1 \wedge \left( 3 \, \gamma^1 + \gamma^2 \right) \, ,\quad  \dd \gamma^2  = \alpha^1 \wedge \alpha^3 - \alpha^2 \wedge \alpha^4 - 2 \, \gamma^3 \wedge \gamma^4 \, , \\
\dd \alpha^3 & = \alpha^1 \wedge \left( 3 \, \gamma^1 - \gamma^2 \right) + \alpha^2 \wedge \gamma^4 - \alpha^4 \wedge \gamma^2 \, , \quad \dd \gamma^3  = - \alpha^1 \wedge \alpha^2 - \alpha^3 \wedge \alpha^4 - 2 \, \gamma^4 \wedge \gamma^2 \, , \\
\dd \alpha^4 & = \alpha^1 \wedge \gamma^4 + \alpha^2 \wedge \left( 3 \, \gamma^1 + \gamma^2 \right) - \alpha^3 \wedge \gamma^3 \, , \quad \dd \gamma^4  = - \alpha^1 \wedge \alpha^4 - \alpha^2 \wedge \alpha^3 - 2 \, \gamma^2 \wedge \gamma^3 \, , \\
\dd \beta^1 & = - \beta^2 \wedge \beta^3 \, ,\quad \dd \beta^2  = - \beta^3 \wedge \beta^1 \, ,  \quad \dd \beta^3  = - \beta^1 \wedge \beta^2 \, .
\end{align*}
Any $\U_2$-invariant metric on $M^{7}$ has  the form
$  g  = g_4 + g_3 $
where   $g_4  = a \, \sum_{i=1}^4 \alpha^i \otimes \alpha^i $  is proportional  to  the Fubin-Strudy metric  and  $g_3$  is   any Euclidean metric on
$\mathfrak{su}_3$. Without loss of generality,  we  may  assume  that
$g_3  = \sum_{i=1}^3 c_i \, \beta^i \otimes \beta^i$,
for some positive constants $c_i$ (see \cite{Milnor}). 	Denote by $\vol_4 = a^2\cdot(\alpha^1 \wedge \alpha^2 \wedge \alpha^3 \wedge \alpha^4)$ the volume form induced from $g_4$ on $\bb{CP}^2$ and by $\vol_3 = \sqrt{c_1 c_2 c_3}\cdot(\beta^1 \wedge \beta^2 \wedge \beta^3)$ the volume form on $\Ss^3$ induced from $g_3$. Then, the metric-compatible volume form is given by $\vol_7 = \vol_4 \wedge \vol_3$.

Now, the most general $\U_2$-invariant $3$-form on $M^{7}$ is  given  by
\begin{equation}\label{3form1}
\phi  = \omega \wedge \theta + b \cdot \vol_3,
\end{equation}
where $\omega = a\cdot \left( \alpha^1 \wedge \alpha^3 + \alpha^2 \wedge \alpha^4 \right)$ is the K\"{a}hler form on $\bb{CP}^2$, $\theta$ is an arbitrary $\SU_{2}$-invariant $1$-form on $\Ss^3$ and $b$ a constant. It is straightforward to check that $\omega$ is anti-self-dual, i.e.\ $\star_4 \omega = - \omega$. In particular, we have $\star_7 \phi  = - \omega \wedge \star_3 \theta + b \cdot \vol_4.
$
Computing the exterior derivatives, we find
\[
\dd \star_7 \phi  = - \omega \wedge \dd \star_3 \theta, \quad   \dd \phi  = \omega \wedge \dd \theta \, .
\]
From the structure equations we also see  that any $2$-form on $\SU_{2}$ is closed and thus $\theta$ must be co-closed, i.e.\ $\dd \star_3 \theta = 0$. Hence, the equation $\dd \star_7 \phi  = 0$  
 is always satisfied. Now, the Maxwell equation $\dd \phi  =f \star_7 \phi$ 
 reads as
\begin{align*}
\omega \wedge \dd \theta & =f\cdot \left( - \omega \wedge \star_3 \theta + b \cdot \vol_4 \right).
\end{align*}
Matching each side of the equation yields the following conditions:
\[
\dd \theta  = - f \star_3 \theta, \quad  f\cdot b \cdot \vol_4  = 0 \, .
\]
Taking the components of the first of these equations leads to
\begin{equation}\label{syt1}
\left( - \sqrt{\frac{c_1}{c_2 c_3}}  + f \right) \theta_1  = 0, \quad
\left( - \sqrt{\frac{c_2}{c_3 c_1}}  + f \right) \theta_2  = 0, \quad
\left( - \sqrt{\frac{c_3}{c_1 c_2}}  +f \right) \theta_3  = 0.
\end{equation}
Thus, there are two non-trivial cases to examine:
\begin{itemize}
\item
If $f = 0$, then we automatically get $\dd \theta = 0$, which implies $\theta=0$ by the last system of equations. Thus, (\ref{3form1}) reduces to $\phi = b\cdot \vol_3$.
\item If $f \neq 0$, then we obtain $b=0$ so that (\ref{3form1}) reduces to  $\phi  = \omega \wedge \theta \, .$
\end{itemize}
\begin{prop}
The  only  invariant solutions  of  the  Maxwell  equation  on $M^7 = \bb{CP}^2 \times \Ss^3$ are the following:
\begin{itemize}
\item if $f = 0$, $\phi = b\cdot \vol_3,\ b =  const$,
\item if $f\neq 0$, $\phi = \omega \wedge \theta$ where  $\omega$  is  the  K\"ahler  form  of  $\mathbb{C}P^2$   and  the components of the $1$-form $\theta$ and of the metric are subject to (\ref{syt1}).
\end{itemize}
\end{prop}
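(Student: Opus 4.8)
The proof is a bookkeeping exercise built on the structure equations recorded above, so the plan is simply to assemble the pieces in the right order. First I would pin down the space of $\U_2$-invariant $3$-forms on $M^7=\bb{CP}^2\times\Ss^3$: decomposing $\bigwedge^3\fr{m}^*$ under the isotropy action of $\fr{u}_2$ on $\fr{m}=\fr{m}_1+\fr{m}_2=\bb{R}^4+\fr{su}_2$ and collecting the trivial summands shows that every such form has the shape $\phi=\omega\wedge\theta+b\cdot\vol_3$ appearing in \eqref{3form1}, where $\omega$ is the (unique up to scale) invariant $2$-form on $\bb{CP}^2$, namely the K\"ahler form, $\theta=\theta_1\beta^1+\theta_2\beta^2+\theta_3\beta^3$ is an arbitrary $\SU_2$-invariant $1$-form on $\Ss^3$, and $b\in\bb{R}$. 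This enumeration is the only genuinely delicate step; everything that follows is a direct computation with the given data.

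Next I would compute $\star_7\phi$. Using Lemma \ref{work1}(2) for the product metric together with the elementary fact that $\omega$ is anti-self-dual on the four-manifold ($\star_4\omega=-\omega$), one gets $\star_7\phi=-\omega\wedge\star_3\theta+b\cdot\vol_4$. Differentiating and invoking the structure equations: since every invariant $2$-form on $\SU_2$ is closed, $\dd\star_3\theta=0$, so $\dd\star_7\phi=-\omega\wedge\dd\star_3\theta=0$ automatically (the closure condition $(\mathscr{C})$ imposes no constraint), whereas $\dd\phi=\omega\wedge\dd\theta$ because $\dd\omega=0$ and $\dd\vol_3=0$.

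The Maxwell equation $\dd\phi=f\star_7\phi$ then reads $\omega\wedge\dd\theta=-f\,\omega\wedge\star_3\theta+f\,b\cdot\vol_4$. Matching the summand lying in $\bigwedge^2(\bb{CP}^2)^*\otimes\bigwedge^1(\Ss^3)^*$ against the one lying in $\bigwedge^4(\bb{CP}^2)^*$ forces the two separate conditions $\dd\theta=-f\star_3\theta$ and $f\,b=0$. Writing $\dd\theta$ and $\star_3\theta$ out in the basis $\{\beta^i\}$ by means of the structure equations for $\dd\beta^i$ and the diagonal metric $g_3=\sum_i c_i\,\beta^i\otimes\beta^i$ produces exactly the diagonal linear system \eqref{syt1}.

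It remains to run the case analysis. If $f=0$, then $\dd\theta=0$; since for $f=0$ the coefficients $\sqrt{c_i/(c_jc_k)}$ in \eqref{syt1} are all nonzero, this forces $\theta_1=\theta_2=\theta_3=0$, so $\theta=0$ and $\phi=b\cdot\vol_3$. If $f\neq0$, then $f\,b=0$ gives $b=0$, so $\phi=\omega\wedge\theta$ with the components of $\theta$ and the constants $c_i$ constrained by \eqref{syt1}. This exhausts the invariant solutions and establishes the Proposition. The main obstacle, as noted, is the first step: correctly identifying that the $\fr{m}_1$-factor contributes to an invariant $3$-form only through $\omega\wedge(\,\cdot\,)$, with no further invariant pieces in degree three.
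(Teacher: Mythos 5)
Your proposal is correct and follows essentially the same route as the paper: the same identification of the general invariant $3$-form $\phi=\omega\wedge\theta+b\cdot\vol_3$, the same computation of $\star_7\phi$ via anti-self-duality of $\omega$, the same matching of the two summands in the Maxwell equation to get $\dd\theta=-f\star_3\theta$ and $fb=0$, and the same case analysis. The only (minor) added value is that you sketch a justification for the enumeration of invariant $3$-forms via the isotropy decomposition of $\bigwedge^3\fr{m}^*$, which the paper simply asserts.
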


In both cases, one can check that these special 3-forms do not satisfy the supergravity Einstein equation with respect to the metric $g$, hence $M^{7}$  does not provide us with a special gravitational 7-manifold.
\subsection{Solution  of of Type III$\be$ for the  Maxwell   equation   on the Lie group  $ G = \Ss^{3}\times {\rm T}^4$.}
   We   choose  a left invariant metric $g$   on  $G$  such  that  the   decomposition    $\mathfrak{g} = \mathfrak{su}_2 + \mathfrak{t}$   is orthogonal, where we indentify the tangent space of  $\Ss^{3}=\SU_2$ with the Lie algebra $\fr{su}_{2}$ and similarly for the 4-torus ${\rm T}^{4}$, i.e. $\fr{t}=T_{e}{\rm T}^{4}$.
    Then we may choose    and  orthogonal basis   $\omega_{\alpha}$  of  1-forms  on  $\mathfrak{su}_2$    such  that
 $d \omega^{\alpha} = \omega^{\beta} \wedge \omega^{\gamma}$,
where  $(\alpha, \beta, \gamma)$ is  a  cyclic  permutation  of  $(1,2,3)$, and moreover an orthonormal basis $\rho_i, \, i=1,2,3,4$ of  $\mathfrak{t}$ such that $d \rho_i =0$.
Set
\[
\bigwedge^{p,q} = \bigwedge^p(\mathfrak{su}_2^*) \wedge \bigwedge^q (\mathfrak{t}^*).
\]
Then  $\dd \bigwedge^{p,q} \subset \bigwedge^{p+1,q} $  and   $\star_{7} \bigwedge^{p,q} \subset \bigwedge^{3-p,  4 - q}$.
This  show  that   any  solution  of Maxwell equation belongs   to
\[
\bigwedge^{1, 2}  = \mathfrak{su}_2^* \wedge  \bigwedge^2(\mathfrak{t}^*).
\]
   Now, the    space   $\bigwedge^2(\mathfrak{t}^*)= \bigwedge^{+}+\bigwedge^{-}$   is  the  direct  sum  of    self-dual  forms  $\bigwedge^{+}$  and    anti-self-dual  forms   $\bigwedge^{-}$,  which  are the  $\pm$  eigenspaces of  the Hodge operator  $\star_{4}$.
Set $\phi = \omega \wedge \sigma \in  \bigwedge^{1,2}$,
where $\omega$ is a left-invariant 1-form on $\SU_{2}$ and $\sigma\in \bigwedge^2(\mathfrak{t}^*)$ is a left-invariant 2-form on the torus ${\rm T}^{4}$.
Then we get
\[
\dd \phi = d \omega  \wedge \sigma, \quad  \star_{7}\phi = \star_3 \omega \wedge \star_{4}\sigma.
\]
Now, we may  assume  that   $g(\omega^{\alpha}, \omega^{\beta} ) = (\lambda^{\alpha})^{-2} \delta^{\alpha, \beta}$.
In this case it is easy to see that $\tilde{\omega}^{\alpha} = \lambda^{\alpha}\omega $ is  an orthonormal basis   and moreover
\[
\star_3 \omega^{\alpha}  =  \frac{\lambda^{\beta} \lambda^{\gamma}}{\lambda^{\alpha}}\omega^{\beta} \wedge \omega^{\gamma}.
\]
Therefore, $\phi = \omega^{\alpha} \wedge \sigma$  satisfies  the Maxwell  equation  if  and only if
\[
\star_{4} \sigma = \pm \sigma, \quad\text{and}\quad   \lambda^{\beta} \lambda^{\gamma}  = \pm \lambda^{\alpha}.
\]
This  implies that   $ \lambda^{\alpha}  = \pm  1$. More precisely, $(\lambda^1, \lambda^2, \lambda^3)   = (\pm  1, \pm 1, \pm 1)$.
Note that if   $\sigma$ is    self-dual the number of  units in  this   triple must be odd and  if
$\sigma$ is an anti-self-dual the corresponding number is even.
For  example,  assume  that  $\lambda^{\alpha} =1, \,  \alpha  = 1,2,3 $. Then, any  self-dual  2 form  $\sigma \in  \bigwedge^+$   defines  a  solution of Type III$\be$ for the Maxwell, given by  $\phi = \omega \wedge \sigma$,
 where  $\omega$
 is  any  unit  1-form in  $\mathfrak{su}_2^*$.


 \end{document}